\newcommand{\ninseps}[3]{
\begin{figure}[h]
 \scalebox{#3}{\includegraphics{#1}}

\vspace{-0.65cm}
\caption{\hspace{0.25cm}#2\label{f:#1}}
\end{figure}
}
\newcommand{\ninsepsc}[3]{
\begin{figure}[h]
\begin{center}
 \scalebox{#3}{\includegraphics{#1}}
\end{center}

\vspace{-0.65cm}
\caption{\hspace{0.25cm}#2\label{f:#1}}
\end{figure}
}
\newtheorem{theorem}{Theorem}[section]
\newtheorem{lemma}{Lemma}
\newtheorem{definition}{Definition}[section]
\newtheorem{remark}{Remark}
\newtheorem{proposition}{Proposition}[section]
\begin{document}
\title{Hitting probabilities of constrained random walks representing tandem networks}
\author{Ali Devin Sezer\footnote{Middle East Technical University, Institute of Applied Mathematics, Ankara, Turkey, devinsezer@gmail.com.
Sections 3,4 and 5 of the present work are revisions of Sections 5,6 and subsection 8.3 of \cite{sezer2015exit}}}
\maketitle
\begin{abstract}
Let $X$ be the constrained random walk on $\mathbb{Z}_+^d$ $d >2$, having increments $e_1$, $-e_i+e_{i+1}$ $i=1,2,3,...,d-1$ and $-e_d$ with probabilities $\lambda$, $\mu_1$, $\mu_2$,...,$\mu_d$, where $\{e_1,e_2,..,e_d\}$ are the standard basis vectors.  The process $X$ is assumed stable, i.e., $\lambda < \mu_i$ for all $i=1,2,3,...,d.$ Let $\tau_n$ be the first time the sum of the components of $X$ equals $n$. We derive approximation formulas for the probability ${\mathbb P}_x(\tau_n < \tau_0)$.  For $x \in  \bigcup_{i=1}^d \Big\{x \in {\mathbb R}^d_+: \sum_{j=1}^{i} x(j)$ $>  \left(1 - \frac{\log \lambda/\min \mu_i}{\log \lambda/\mu_i}\right) \Big\}$ and a sequence of initial points $x_n/n \rightarrow x$ we show that the relative error of the approximation decays exponentially in $n$.  The approximation formula is of the form ${\mathbb P}_y(\tau < \infty)$ where $\tau$ is the first time the sum of the components of a limit process $Y$ is $0$; $Y$ is the process $X$ as observed from a point on the exit boundary except that it is unconstrained in its first component (in particular $Y$ is an unstable process); $Y$ and ${\mathbb P}_y(\tau< \infty)$ arise naturally as the limit of an affine transformation of $X$ and the probability ${\mathbb P}_x(\tau_n < \tau_0).$ The analysis of the relative error is based on a new construction of supermartingales. We derive an explicit formula for ${\mathbb P}_y(\tau < \infty)$ in terms of the ratios $\lambda/\mu_i$ which is based on the concepts of harmonic systems and their solutions and conjugate points on a characteristic surface associated with the process $Y$; the derivation of the formula assumes $\mu_i \neq \mu_j$ for $i\neq j.$
\end{abstract}

\section{Introduction and Definitions}
For an integer $d \ge 3$
let $X$ be a random walk with independent and identically
distributed increments $\{I_1,I_2,I_3,...\}$, $I_k \in {\mathcal V} \subset {\mathbb Z}^d$,  constrained
to remain in ${\mathbb Z}_+^d$, i.e.,
\[
X_{k+1} = X_k + \pi(X_k,I_{k+1}),
\]
where
\[
\pi(x,v) \doteq \begin{cases} v, &\text{ if } x +v \in {\mathbb Z}_+^d \\
				  0      , &\text{otherwise.}
\end{cases}
\]
The constraining boundaries of $X$ are
\[
\partial_j = \{ x \in {\mathbb Z}^d: x(j) = 0 \}, j \in \{1,2,3,...d\}.
\]
For 
$ A_n \doteq \{ x \in {\mathbb Z}_+^d: \sum_{i=1}^d x(i) \le n \}$
and
$\partial A_n \doteq \{ x \in {\mathbb Z}_+^d: \sum_{i=1}^d x(i) = n \}$
define the stopping time
\[
\tau_n \doteq \inf \{k \ge 0: X_k \in \partial A_n\}.
\]
The goal of this work is to develop approximations of the hitting
probability
\[
{\mathbb P}_x( \tau_n < \tau_0) 
\]
for $x \in {\mathbb Z}_+^d$,  $x \in A_n$
when $X$ is a constrained random walk that represents $d$ queues
in tandem (a tandem network), i.e., 
when the set of possible increments of $X$
are
\begin{align*}
{\mathcal V} &= \{e_1, -e_1 + e_2,..., -e_j+e_{j+1},...,-e_{d-1}+e_d, -e_d \},\\
e_i(j) &= \begin{cases} 1, &\text{ if } i = j,\\
		       0, &\text{otherwise,}
\end{cases}
\end{align*}
$i,j=1,2,3,...,d$; $\{e_i,i=1,2,3,...,d\}$
are the unit vectors in ${\mathbb Z}^d$.
The distribution of the increments is given as follows:
\begin{align*}
{\mathbb P}(I_k = e_1) &= \lambda, \\
{\mathbb P}(I_k = e_{i+1}-e_{i}) &= \mu_i,~~~ i=1,2,3,...,d-1,\\
{\mathbb P}(I_k = -e_d) &= \mu_d.
\end{align*}
We assume $X$ to be stable:
\begin{equation}\label{as:stability}
\lambda < \max_{i=1}^d \mu_i
\end{equation}
which implies
\[
\rho_i \doteq \lambda/\mu_i, ~~\rho \doteq \max_i \rho_i < 1.
\]

The random walk $X$ can represent $d$ servers/processes working in tandem;
in this interpretation, $\lambda$ is the arrival rate to the first queue
the $\mu_i$ are the processing rates of the servers and the components
of $X$ are the number of items/packets waiting for service and the probability
$p_n(x) = {\mathbb P}_x(\tau_n < \tau_0)$ is the probability that the number
of packets in the system reaches $n$ before the system empties.
The analysis of $p_n$ goes at least back to
\cite{ GlassKou,parekh1989quick}.
Stability of $X$ implies that this probability decays exponentially in $n$.
Its exponential decay rate (i.e., the large deviations limit) is computed in
\cite{GlassKou, ignatiouk2000large} as
\[
\lim_{n\rightarrow \infty} -\frac{1}{n}\log p_n( x_n) =-\log\rho
\]
for $x=x_n$, $x_n/n \rightarrow 0$.
Because it is an exponentially decaying probability
its simulation requires variance reduction algorithms, see, e.g.,
\cite{DSW,blanchet2013optimal} and the references in these works.
The work \cite{sezer2018approximation} develops precise analytical formulas for this probability
for $d=2$ based on an affine transformation of $X$ and 
${\mathbb P}_x(\tau_n < \tau_0)$; the goal of the present work is to extend
these results to dimensions $3$ or more.
There is a wide literature on the approximation/simulation of probabilities of the type 
${\mathbb P}_x(\tau_n < \tau_0)$; we refer the reader to \cite[Sections 1,6]{sezer2018approximation}
for a literature review.

Let ${\mathcal I}_1 \in {\mathbb R}^{d\times d}$ be the diagonal matrix
with diagonal entries ${\mathcal I}_1(1,1) =-1$, ${\mathcal I}_1(j,j) =1$
$j >1$ and $T_n = n + {\mathcal I}_1.$ 
Define
\begin{equation}\label{e:defY}
J_k \doteq {\mathcal I}_1 I_k,~~~ Y_{k+1} \doteq Y_k + \pi_1(Y_k,J_{k+1}),
\end{equation}
where
\[
\pi_1(y,v) \doteq \begin{cases} v, &\text{ if } x +v \in {\mathbb Z} \times {\mathbb Z}_+^{d-1} \\
				  0      , &\text{otherwise.}
\end{cases}
\]
Define, $\Omega_Y \doteq {\mathbb Z} \times {\mathbb Z}_+^{d-1}$,
\begin{equation*}
B \doteq \left\{ y \in \Omega_Y, y(1) \ge \sum_{j=2}^d y(j) \right\};
\end{equation*}
the boundary of $B$ is 
\begin{equation*}
\partial B = \left\{ y \in \Omega_Y, y(1) = \sum_{j=2 }^d y(j) \right\}.
\end{equation*}
The limit stopping time
\begin{equation*}
\tau \doteq \inf \{k: Y_k \in \partial B\}
\end{equation*}
is the first time $Y$ hits $\partial B$.
Our goal is to approximate ${\mathbb P}_x(\tau_n < \tau_0)$
by the limit probability ${\mathbb P}_{T_n(x)}(\tau < \infty) = {\mathbb P}(
\tau < \infty | Y_0 = T_n(x)).$ The process $Y^n =T_n(X)$ is
the process $X$ as observed from the boundary
point $(n,0,...,0) \in \partial A_n.$
The limit process $Y$ and the limit probability ${\mathbb P}_y(\tau < \infty)$
is obtained by letting $n\rightarrow \infty$, see Figure \ref{f:Tn2tex}
for an illustration in two dimensions.

\begin{figure}[h]
\begin{center}
\scalebox{0.8}{
\centerline{\input{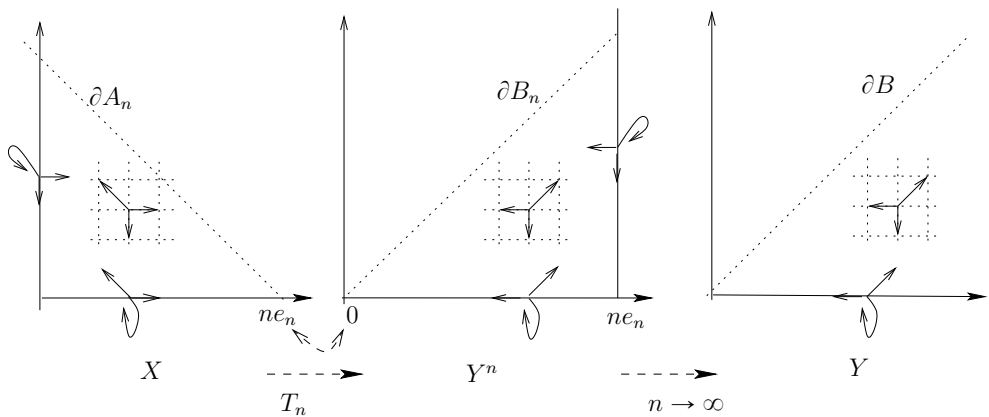}}}
\end{center}

\vspace{-0.65cm}
\caption{\hspace{0.25cm}Derivation of the limit problem via an affine transformation in two dimensions\label{f:Tn2tex}}
\end{figure}

Define
\begin{align*}
R_\rho &\doteq 
\bigcap_{i =1 }^d \left\{x \in {\mathbb R}^d_+: \sum_{j=1}^{i} x(j) \le  \left(1 - \frac{\log \rho}{\log \rho_i}\right) \right\}\\
\bar{R}_{\rho,n} &\doteq 
\bigcup_{i =1}^d \left\{x \in {\mathbb Z}^d_+: \sum_{j=1}^{i} x(j) \ge 1+ n\left(1 - \frac{\log \rho}{\log \rho_i}\right) \right\}.
\end{align*}
\[
g: {\mathbb R}_+^d \mapsto {\mathbb R}, g(x) = \max_{i \in \{1,2,..,d\}}
(1-\sum_{j=1}^i x(j))\log_{\rho}\rho_i
\]
First main result of the present paper is the following:
\begin{theorem}\label{t:main1}
 For  $\epsilon > 0$
there exists $n_0 > 0$ such that
\begin{equation}\label{e:mainrelativeerrorboundintro}
\frac{ |{\mathbb P}_{x}(\tau_n < \tau_0) - {\mathbb P}_{T_n(x)}(\tau < \infty)|}{ {\mathbb P}_{x}(\tau_n < \tau_0)} \le \rho^{n(1 - g(x/n)-\epsilon)}
\end{equation}
for all $n > n_0$ and for any $x \in \bar{R}_{\rho,n}$.
In particular, for $x_n/n \rightarrow x  \in A - R_\rho$ the relative error
decays exponentially with rate $-\log(\rho)(1-g(x))>0$, i.e.,
\begin{equation}\label{e:decayrateintro}
\liminf_n -\frac{1}{n} \log \left(
\frac{ |{\mathbb P}_{x_n}(\tau_n < \tau_0) - {\mathbb P}_{T_n(x_n)}(\tau < \infty)|}{ {\mathbb P}_{x_n}(\tau_n < \tau_0)} \right)
 \ge -\log(\rho)(1-g(x)).
\end{equation}
\end{theorem}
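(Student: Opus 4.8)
The plan is to control the relative error by exhibiting an exact decomposition of $\mathbb{P}_x(\tau_n<\tau_0)$ into the candidate approximation $\mathbb{P}_{T_n(x)}(\tau<\infty)$ plus an error term, and then bounding the error term from above by a supermartingale argument. More precisely, write $h_n(x)\doteq\mathbb{P}_x(\tau_n<\tau_0)$ and $\bar h_n(x)\doteq\mathbb{P}_{T_n(x)}(\tau<\infty)$. Both are harmonic for the respective constrained walks in the interior of $A_n$, with boundary values $1$ on $\partial A_n$ (resp. the image of $\partial A_n$ under $T_n$) and $0$ on $\{\sum_i x(i)=0\}$; the key point is that $\bar h_n$, pulled back through $T_n$, is \emph{almost} harmonic for $X$ except near the constraining boundaries $\partial_j$, $j\ge 2$, and near $\tau_0$. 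So the first step is to write $h_n - \bar h_n \circ T_n$ as an expectation of the accumulated ``defect'' $(L_X - L_Y)\bar h_n$ along the path of $X$ up to $\tau_n\wedge\tau_0$, where $L_X, L_Y$ are the respective generators; because $Y$ differs from $X$ only by being unconstrained in its first coordinate, this defect is supported on visits of $X$ to the boundary faces $\partial_j$ (equivalently, to points where $T_n(X)$ would leave $B$ through the wrong face) and on the event $\tau_0<\tau_n$.

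The second step is to estimate the defect. On each boundary face the value of $\bar h_n$ is governed by the explicit product-form-type formula for $\mathbb{P}_y(\tau<\infty)$ (whose derivation is promised later in the paper via harmonic systems and conjugate points), and one reads off that $\bar h_n(T_n(x))$ is comparable to $\rho^{\,n\,g(x/n)}$ up to subexponential factors — this is exactly where the function $g$ and the region $\bar R_{\rho,n}$ enter, since $g(x/n)$ is the exponential rate at which $T_n(x)$ is ``deep inside'' $B$, measured through the worst constraint $i$. Meanwhile $h_n(x)$ itself decays like $\rho^{\,n\,g(x/n)}$ to leading exponential order (this should follow from the large-deviations rate $-\log\rho$ quoted from \cite{GlassKou, ignatiouk2000large} together with the geometry of $A_n$ and the starting point), so the \emph{ratio} $\bar h_n/h_n$ is bounded; what remains is to show the defect contributes a factor that is smaller by $\rho^{\,n(1-g(x/n)-\epsilon)}$. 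The mechanism is that to produce a defect, the path of $X$ must reach a constraining boundary $\partial_j$ (or empty a queue) and then still reach level $n$, and the most likely such path pays an extra exponential cost; translating this cost into the stated exponent $n(1-g(x/n))$ is the analytic heart of the argument.

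The third step — and the one the abstract flags as the genuinely new ingredient — is the \emph{construction of supermartingales} that make the above heuristic rigorous. The idea is to build, for the $X$-walk killed at $\tau_0$, a nonnegative function $W_n$ on $A_n$ that (i) dominates the pointwise defect $|(L_X-L_Y)\bar h_n|$ wherever it is nonzero, (ii) is superharmonic for $X$ in the interior, and (iii) satisfies $W_n(x)\le \rho^{\,n(1-g(x/n)-\epsilon)}h_n(x)$ on $\bar R_{\rho,n}$. Then optional stopping gives $|h_n(x)-\bar h_n(T_n(x))|\le \mathbb{E}_x[W_n(X_{\tau_n\wedge\tau_0})]+ (\text{accumulated defect})\le W_n(x)$ up to constants, which is the claimed bound. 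The natural candidates for $W_n$ are products of exponentials $\prod_j \beta_j^{\,x(j)}$ (logarithms of the $\rho_i$), tuned so that the one-step drift inequality holds on each face simultaneously; the obstruction is that a single exponential product cannot be superharmonic on all faces at once, so one must patch several such exponentials together (a max or a sum over $i\in\{1,\dots,d\}$, mirroring the max in the definition of $g$), and verify the superharmonicity of the patched function across the interfaces between regions — this patching, and checking it does not destroy the drift inequality on the constraining boundaries, is where the bulk of the technical work and the main obstacle lie. Once \eqref{e:mainrelativeerrorboundintro} is established, \eqref{e:decayrateintro} follows immediately by taking $-\frac1n\log(\cdot)$, using $x_n/n\to x$, continuity of $g$, and letting $\epsilon\downarrow 0$.
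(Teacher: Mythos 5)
Your proposal takes a genuinely different route from the paper. The paper never forms the generator defect $(L_X-L_Y)\bar h_n$; instead it runs $X$ and $\bar X\doteq T_n(Y)$ \emph{pathwise} on the same driving sequence $I_k$ from the same initial point, proves (Lemma \ref{l:beforesigmadd1}) that the two trajectories literally coincide until the first hit of $\partial_1$ and keep equal level sums ${\bm S}(X_k)={\bm S}(\bar X_k)$ until the nested stopping time $\sigma_{d-1,d}$, and then (Lemma \ref{l:boundondifference}) traps the symmetric difference of $\{\tau_n<\tau_0\}$ and $\{\bar\tau_n<\infty\}$ inside the union of two explicit events, $\{\sigma_{d-1,d}<\tau_n<\tau_0\}$ and $\{\bar\tau_0<\bar\tau_n<\infty\}$. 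Each of these is then bounded by a dedicated supermartingale. Your defect decomposition is a legitimate alternative skeleton, and it is the kind of argument that appears in neighbouring literature; but as written it leaves several holes that the paper's specific machinery is built to close.

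First, and most seriously: the denominator lower bound cannot be read off from the large-deviations limit. The limit $\lim_n -\frac1n\log p_n(x_n)=-\log\rho$ is only a logarithmic asymptotic, valid for $x_n/n\to 0$, and gives no prelimit, non-asymptotic lower bound of the form ${\mathbb P}_x(\tau_n<\tau_0)\ge g_n(x)-\rho^n$ that is needed to compare against the exponentially small numerator \emph{for each fixed $n>n_0$} and for $x$ with $x/n$ bounded away from $0$. The paper constructs exactly this via the $X$-subharmonic functions $g_{i,n}=h_{i,\rho_i}\circ T_n$ and Proposition \ref{p:lowerboundP}, and then Proposition \ref{p:aboutgn} gives the uniform ratio bound $\frac{g_n(x)}{g_n(x)-\rho^n}\le\frac1{1-\rho}$ on $\bar R_{\rho,n}$, which is what actually closes the estimate. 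Without such a quantitative lower bound your second step stalls.

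Second, invoking the explicit formula for ${\mathbb P}_y(\tau<\infty)$ (the Section \ref{s:hstq} formula) in the error analysis is both unnecessary and counterproductive: that formula requires the extra hypothesis $\mu_i\neq\mu_j$, whereas Theorem \ref{t:main1} is proved under the bare stability assumption. The paper deliberately bypasses the formula by building much simpler strictly $Y$-superharmonic functions $h_{k,r}$ and their combinations $h_{2,k,r}$ with $r\in(\rho,1)$ strictly away from the $\rho_i$ (Propositions \ref{p:hkr}, \ref{p:Ysuperharmonicfuns}); this is flagged in the introduction as a key point of the argument. Your proposal would reintroduce the very dependency the paper avoids.

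Third, the supermartingale ``patching'' step is where the real technical content lies, and the sketch glosses over its two hard features. (i) The patched process is not a max or sum over $i$ of exponentials at fixed parameter; rather it is the stage-indexed process $S_k'=\Gamma_j h_{2,j,r}(T_n(X_k))$ for $\sigma_{j-1,j}<k\le\sigma_{j,j+1}$, with the constants $\gamma_j,\Gamma_j$ chosen so that the value does not increase across the stage transitions (Lemma \ref{l:inequalityoverjumps}). (ii) This $S'$ fails the supermartingale inequality on $\partial_1$; the paper repairs it by subtracting a strictly growing linear term in $k$ (equation \eqref{e:defsupS}) and then pays for that subtraction by truncating time at $c_6 n$ using the bound ${\mathbb P}_x(\tau_n\wedge\tau_0>c_1 n)\le\rho^{2n}$. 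Your proposal never mentions either the $\partial_1$ obstruction or the time truncation, yet neither is optional. Taken together these gaps mean the proposal, while structurally reasonable as an alternate outline, does not reach \eqref{e:mainrelativeerrorboundintro} without supplying essentially the same constructions the paper carries out.
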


A precise statement corresponding to Figure \ref{f:Tn2tex} is
\cite[Proposition 1]{sezer2018approximation} 
which states
\begin{equation}\label{e:convergeiny}
\lim_n {\mathbb P}_{T_n(y)}(\tau_n < \tau_0) =
{\mathbb P}_y(\tau < \infty), y \in B,
\end{equation} 
for any stable Jackson network
in any dimension. In \eqref{e:convergeiny} the initial point
of the process is specified and fixed in $y$-coordinates;
in \eqref{e:decayrateintro} it is specified in scaled $x$ coordinates
(as is done in large deviations analysis).
For fixed $y \in B$,
the probability ${\mathbb P}_{T_n(y)}(\tau_n < \tau_0)$ doesn't
decay to $0$ in $n$
but converges to the nonzero probability ${\mathbb P}_y(\tau < \infty).$
In \eqref{e:decayrateintro},
where the initial position is fixed in scaled $x$ coordinates both
of the probabilities
${\mathbb P}_{x_n}(\tau_n < \tau_0)$ and
${\mathbb P}_{T_n(x_n)}(\tau < \infty)$ decay to $0$ exponentially.
The limit \eqref{e:decayrateintro} expresses that the difference between
them decays exponentially faster than 
${\mathbb P}_{x_n}(\tau_n < \tau_0)$.

Previous results of the type \eqref{e:decayrateintro} are as follows:
\cite[Proposition 8]{sezer2018approximation} treats the  case $d=2$, 
\cite[Theorem 6.1]{unlu2019excessive}
treats the constrained simple random walk in two dimensions
(i.e., the constrained random walk with increments $\pm e_i$, $i=1,2$)
and \cite[Theorem 6.1]{kabran2020approximation} treats 
the case $d=2$ when the dynamics of the constrained random walk
is Markov modulated (i.e., in addition to $X$ there is an additional finite
state Markov chain $M$ that determines the jump distributions of $X$).
These prior results state that the relative error on the left
side of \eqref{e:decayrateintro} converges to $0$ exponentially at a rate
depending on $x$; the precise formulation of the decay rate as in the
right side of \eqref{e:decayrateintro} is new. 
The prelimit statement \eqref{e:mainrelativeerrorboundintro} that is specified
in terms of an unscaled initial point $x$ is also new.
Theorem \ref{t:main1} uses only the stability assumption on model parameters;
all of the prior results just cited use an additional assumption, which
in the present case would be $\mu_i \neq \mu_j$ for $i \neq j$; 
dropping of this assumption in the error analysis arises from a 
significant change in the argument and we comment on it below
(see the third paragraph below).
We will assume $\mu_i \neq \mu_j$ for $i \neq j$ in Theorem \ref{p:exactformulaDtandem}
which gives an explicit formula for ${\mathbb P}_y(\tau < \infty).$

As in the works just cited we will use the following idea in our
analysis of the relative error:
because the dynamics of $X$ and $Y$ differ only on 
$\partial_1$, 
the events $\{\tau_n < \tau_0\}$ and $\{\tau < \infty\}$
mostly overlap. This is proved as follows: 1) find an event
containing the difference of the events
 $\{\tau_n < \tau_0\}$ and $\{\tau < \infty\}$ 2) prove that the
upperbound event has a small probability. 
Let $\bar\tau_0 =  \inf\left\{k \ge 0: \sum_{j=1}^d Y_k(j) = n\right\} = \inf\left\{k \ge 0: \sum_{j=1}^n T_n(Y_k(j)) = 0 \right\}$ and
let $\sigma_{j,j+1}$ be the first time $X$ hits $\partial_{j+1}$ after hitting
$\partial_j$ (see \eqref{d:sigma01} for the precise definition).
Lemmas \ref{l:beforesigmadd1}, \ref{l:comparetau} and \ref{l:boundondifference}
show that the difference between 
$\{\tau_n < \tau_0\}$ and $\{\tau < \infty\}$
is contained in the
union of $\{\bar\tau_0 < \tau < \infty\}$ and 
$\{\sigma_{d-1,d} < \tau_n < \tau_0\}$.
The proofs of these lemmas 
are more complex compared to their counterparts 
\cite{sezer2018approximation,kabran2020approximation} because $d$ is
now  arbitrary.
Some of the novelties are explained in the paragraphs below.

A function $f:\Omega_Y \mapsto {\mathbb R}$ is said to be 
$Y$-(sub/super)harmonic  
if
 $f(y) = (\le ~/ \ge) {\mathbb E}_y[f(Y_1)] $
for all $y \in \Omega_Y$.
Note that $y\mapsto {\mathbb P}_y(\tau < \infty ) ={\mathbb P}(\tau < \infty | Y_0 = y)$ is a $Y$-harmonic function.
An upperbound on the probability
${\mathbb P}_y(\bar\tau_0 < \tau < \infty)$
follows from the Markov property of $Y$
and an upperbound on 
${\mathbb P}_y(\tau < \infty )$;
in subsection \ref{ss:tauy} we construct an upperbound for this probability.
In previous works treating two dimensions the upperbound follows
directly from the computation/approximation of ${\mathbb P}_y(\tau < \infty)$:
in \cite{sezer2018approximation} there is a simple explicit formula for ${\mathbb P}_y(\tau <\infty)$ and in \cite{kabran2020approximation} an upperbound can be constructed in terms
the $Y$-harmonic functions used in the computation of
${\mathbb P}_y(\tau < \infty).$
In the present setup the formula for ${\mathbb P}_y(\tau < \infty)$ is
more complex (see Theorem \ref{p:exactformulaDtandem}); 
instead of it, in subsection \ref{ss:tauy} we construct
simpler $Y$-superharmonic functions (and corresponding supermartingales) 
that imply the bound we seek
on ${\mathbb P}_y(\tau < \infty)$ (we comment on this further in the next paragraph).
Subsection \ref{ss:sigmad1d} derives an upperbound on the probability
${\mathbb P}_x(\sigma_{d-1,d} < \tau_n < \tau_0)$
(Proposition \ref{p:upperboundonlong}). For the proof we construct
a supermartingale corresponding to the event 
$\{\sigma_{d-1,d} < \tau_n < \tau_0\}$. The event happens in $d$
stages (the process moves from stage $j$ to $j+1$ upon hitting
$\partial_{j+1}$); the supermartingale is obtained by applying one
of the $Y$-superharmonic functions of subsection \ref{ss:tauy} to $X$ at each stage.
Because $Y$ has unconstrained dynamics on $\partial_1$, the process resulting
from the application of these functions is not a supermartingale on $\partial_1$;
to compensate for this we add a strictly decreasing term to the resulting process (see \eqref{e:defsupS}).
As in previous works \cite{sezer2018approximation,unlu2019excessive,kabran2020approximation} we truncate time 
to manage this additional term (see \eqref{e:decomposesup}).

For $\beta \in {\mathbb C}$ and $\alpha \in {\mathbb C}^{\{2,3,4...,d\}}$ 
define the function $[(\beta,\alpha),\cdot]:{\mathbb Z}^d \mapsto {\mathbb C}$
as
\[
[ (\beta,\alpha) , y  ] = \beta^{y(d)-\sum_{j=2}^n y(j)} \prod_{j=2}^d \alpha(j)^{y(j)}.
\]
In \cite{sezer2018approximation,unlu2019excessive,kabran2020approximation} the supermartingales in the relative error analysis are constructed from functions of the above form
where at least some of the parameters $\beta$ and $\alpha$ take values in $\{\rho_i, i=1,2\}$.
A novel feature of the analysis in the present work is the use of values for these parameters that are strictly different from $\rho_i$,
see Propositions \ref{p:hkr}  and \ref{p:Ysuperharmonicfuns}. 
This allows the construction of strictly $Y$-superharmonic functions which
have much simpler structures compared to the $Y$-harmonic functions appearing
in the computation of ${\mathbb P}_y(\tau < \infty)$. The construction
of the just mentioned 
strictly $Y$-superharmonic functions do not need any assumptions
beyond the stability assumption \eqref{as:stability}; 
this is the reason we are able to derive
\eqref{e:mainrelativeerrorboundintro} and \eqref{e:decayrateintro}
based only on the stability assumption. 

With this theorem, the approximation of ${\mathbb P}_x(\tau_n < \tau_0)$ reduces
to the computation of ${\mathbb P}_y(\tau < \infty).$ Considered as a function
$y\mapsto {\mathbb P}_y(\tau < \infty)$ of $y$, this probability is the unique
$\partial B$-determined $Y$-harmonic function taking the value $1$ on $\partial B.$
Section \ref{s:dg2} reduces the construction of these
functions to solutions of systems of equations represented by graphs 
with labeled edges
(a ``harmonic system'', see Definition
\ref{d:Yharmonic}). This reduction
can be easily carried out for constrained random walks arising from any Jackson network and
therefore in this section we will work in that generality. Section \ref{s:hstq}
introduces a class of
harmonic systems for tandem networks and provides  solutions
for them.
Theorem \ref{p:exactformulaDtandem} gives an explicit formula
for $y\mapsto {\mathbb P}_y(\tau < \infty)$ 
as a linear combination of the functions defined by these solutions.
Section \ref{s:numerical} provides numerical examples showing the effectiveness of the formulas obtained.
In Section \ref{s:conclusion} we comment on future work.

\section{Error Analysis}
The goal of this section is to prove Theorem \ref{t:main1}.
This theorem generalizes \cite[Proposition 8]{sezer2018approximation}, 
which treats $d=2$, to an arbitrary positive dimension $d > 0$.
The proof is based on the stopping times
\begin{align}\label{d:sigma01}
\sigma_{0,1} &\doteq \inf \{k \ge 0: X_k \in \partial_1\}\\
\sigma_{j-1,j} &\doteq \inf \{k > \sigma_{j-2,j-1}: X_k \in \partial_j\},  \notag
j=2,3,...,d.
\end{align}
Time $\tau_0$ is the first time the set of all components of $X$ equal $0$; this definition and the dynamics of $X$ imply
$\tau_0 \ge \sigma_{d-1,d}.$
We will use these stopping times to show that the events 
$\{tau_n < \tau_0\}$ and $\{\tau < \infty) \}$ mostly overlap. 
In what follows it will be convenient to represent $Y$ in $x$-coordinates:
$\bar{X}_k = T_n(Y_k)$,
$\bar{X}$ has the same dynamics as $X$ except on $\partial_1$ where it is not
constrained, i.e.,
\[
\bar{X}_{k+1} = \bar{X}_k + \pi_1(\bar{X}_k, I_{k+1});
\]
$\bar{X}$ and $X$ processes start from the same point:
\begin{equation}\label{e:sameinitcond}
X_0 = \bar{X}_0 = x_n;
\end{equation}
the $Y$ process then has initial point $Y_0 = T_n(x_n).$
Define
\[
\bar{\tau}_n \doteq \inf\left \{k: \sum_{j=1}^d \bar{X}_k(j) = n \right\};
\]
we note $\bar{\tau}_n = \tau$, therefore:
\[
{\mathbb P}_{x_n}(\tau_n < \infty) = {\mathbb P}_{T_n(x_n)}(\tau < \infty).
\]

Define ${\bm S} : {\mathbb Z}^d \rightarrow {\mathbb Z}$ as
\[
{\bm S}(x) = \sum_{j=1}^d x(j).
\]
\begin{lemma}\label{l:beforesigmadd1}
\begin{align}
\bar{X}_k(l) &\ge X_k(l), l=2,3,...,j+1, \label{e:XbarX1}\\
\bar{X}_k(l) &= X_k(l), l= j+2,j+3,...,d, \label{e:XbarX2}\\
{\bm S}( X_k) &= {\bm S}(\bar{X}_k) \label{e:XbarX3}
\end{align}
for $k \le \sigma_{j,j+1}$, $j \in \{0,1,2,3,...,d-1\}$;
\begin{equation}\label{e:XbarX4}
{\bm S}( X_k) \ge {\bm S}(\bar{X}_k) 
\end{equation}
for $k > \sigma_{d-1,d}.$
\end{lemma}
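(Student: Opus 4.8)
The plan is to deduce \eqref{e:XbarX1}--\eqref{e:XbarX4} from a coordinatewise comparison of $X$ and $\bar X$ that holds at \emph{every} time $k$, a statement locating the first time the two processes disagree in a given coordinate, and a bookkeeping identity for ${\bm S}$. The structural fact to exploit throughout is that $X$ and $\bar X$ are driven by the same increments $\{I_k\}$ and that $\pi$ and $\pi_1$ can disagree only on the increment $-e_1+e_2$, and only when the current first coordinate vanishes; equivalently $\pi(y,v)=\pi_1(y,v)$ for every $v\in{\mathcal V}$ as soon as $y\in{\mathbb Z}_+^d$ has $y(1)\ge 1$. \emph{Step 1.} First I would prove, by induction on $k$, that $\bar X_k(l)\ge X_k(l)$ for every $l\in\{2,\dots,d\}$ and that $\bar X_k(1)\le X_k(1)$, for all $k$; the relation for coordinate $1$ is immediate since $\bar X_k(1)-X_k(1)$ starts at $0$ and can only decrease (by one, exactly when $I_{k+1}=-e_1+e_2$ and $X_k(1)=0$). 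For $l\ge 2$ the inductive step is a direct case check over $v\in{\mathcal V}$: $e_1$ moves no coordinate $\ge 2$, and the transfer increments $-e_{l-1}+e_l$ and the exit increment $-e_d$ are handled by monotonicity of $x\mapsto x-\mathbf 1\{x>0\}$ together with the fact that, on coordinates $\ge 2$, an increment blocked for $\bar X$ is blocked for $X$ as well (same blocking rule, and $\bar X\ge X$ there), while $-e_1+e_2$ is never blocked for $\bar X$. This already gives \eqref{e:XbarX1}, in fact for all $k$.

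\emph{Step 2.} Put $\kappa_l\doteq\inf\{k:\bar X_k(l)\ne X_k(l)\}$. The core of the argument is the claim that $\kappa_l>\sigma_{l-2,l-1}$ for $l=2,\dots,d$, with $\sigma_{0,1}$ read in place of $\sigma_{l-2,l-1}$ when $l=2$. The base case follows because, by the structural fact above, $\pi$ and $\pi_1$ agree on the current state while $X$ has not reached $\partial_1$, so a short induction gives $X_k=\bar X_k$ for $k\le\sigma_{0,1}$, hence $\kappa_2>\sigma_{0,1}$. For $l\ge 3$, Step 1 shows that $\bar X_k(l)-X_k(l)$ can increase only along the inflow increment $-e_{l-1}+e_l$, and then only at a step where $X_k(l-1)=0$ and $\bar X_k(l-1)>0$ (the exit increments never raise it, since $\bar X_k(l)\ge X_k(l)\ge 0$). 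As this difference starts at $0$ and changes by at most one per step, at time $\kappa_l-1$ one must have $X_{\kappa_l-1}(l-1)=0$, so $X_{\kappa_l-1}\in\partial_{l-1}$, and $\bar X_{\kappa_l-1}(l-1)>0$, so $\kappa_{l-1}\le\kappa_l-1$. Since $\sigma_{l-2,l-1}$ is the first visit of $X$ to $\partial_{l-1}$ strictly after $\sigma_{l-3,l-2}$, the time $\kappa_l-1$ is either $\ge\sigma_{l-2,l-1}$---and then $\kappa_l>\sigma_{l-2,l-1}$---or $\le\sigma_{l-3,l-2}$, the latter being impossible since the induction hypothesis $\kappa_{l-1}>\sigma_{l-3,l-2}$ would force $\kappa_{l-1}\le\kappa_l-1\le\sigma_{l-3,l-2}<\kappa_{l-1}$. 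As the $\sigma_{j,j+1}$ are nondecreasing in $j$, this yields $\kappa_l>\sigma_{l-2,l-1}\ge\sigma_{j,j+1}$ for all $j\le l-2$, which is \eqref{e:XbarX2}.

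\emph{Step 3.} For the two statements about ${\bm S}$ I would track this quantity along the path: the only increments that change ${\bm S}(X)$ are $e_1$ (always $+1$, for both processes) and $-e_d$ (giving $-1$ unless blocked), so
\[
{\bm S}(X_k)-{\bm S}(\bar X_k)=\sum_{i=1}^{k}\mathbf 1\{I_i=-e_d\}\bigl(\mathbf 1\{\bar X_{i-1}(d)>0\}-\mathbf 1\{X_{i-1}(d)>0\}\bigr).
\]
For $k\le\sigma_{d-1,d}$ every summand vanishes: when $i-1\le\sigma_{d-2,d-1}$ Step 2 gives $\bar X_{i-1}(d)=X_{i-1}(d)$, and when $\sigma_{d-2,d-1}<i-1<\sigma_{d-1,d}$ one has $X_{i-1}\notin\partial_d$, so $X_{i-1}(d)>0$ and, by Step 1, $\bar X_{i-1}(d)\ge X_{i-1}(d)>0$; this is \eqref{e:XbarX3}. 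For $k>\sigma_{d-1,d}$ the partial sum up to $\sigma_{d-1,d}$ is $0$ by \eqref{e:XbarX3} and every later summand is $\ge 0$, because $\bar X_{i-1}(d)\ge X_{i-1}(d)$ (Step 1) forces $\mathbf 1\{\bar X_{i-1}(d)>0\}\ge\mathbf 1\{X_{i-1}(d)>0\}$; this is \eqref{e:XbarX4}. (If some $\sigma_{j,j+1}=\infty$, the corresponding assertion is read as holding for all $k$, and the arguments are unchanged.)

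The increment-by-increment checks in Steps 1 and 3 are routine. The delicate point, which I expect to be the main obstacle, is Step 2: one must argue precisely that a discrepancy in queue $l$ cannot arise before $X$ has visited $\partial_{l-1}$ in the prescribed order, and keep track of the degenerate initial positions lying on one or more of the constraining hyperplanes.
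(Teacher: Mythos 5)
Your proposal is correct and takes a genuinely different route from the paper. The paper proves \eqref{e:XbarX1}--\eqref{e:XbarX3} simultaneously by a nested double induction (outer on $j$, inner on $k$) together with an explicit case enumeration over the boundary faces $\partial_{\mathcal M}$ occupied by $X_{k_0}$ and over the possible increments $I_{k_0+1}$; a separate, parallel case analysis is then invoked for \eqref{e:XbarX4}. You decouple the four statements. Step~1 establishes the global coordinatewise domination $\bar X_k(l)\ge X_k(l)$ for all $l\ge2$ and all $k$ (together with $\bar X_k(1)\le X_k(1)$) by a single one-dimensional induction on $k$; this is strictly stronger than \eqref{e:XbarX1} and also delivers \eqref{e:XbarX4} essentially for free via Step~3. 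Step~2 replaces the paper's nested induction by tracking the first-discrepancy times $\kappa_l$ and isolating the one structural fact that actually makes \eqref{e:XbarX2} true: a new discrepancy in queue $l$ can only be created by the inflow increment $-e_{l-1}+e_l$ at a moment when $X\in\partial_{l-1}$ while $\bar X\notin\partial_{l-1}$, which forces $\kappa_{l-1}\le\kappa_l-1$ and then, by the definition of $\sigma_{l-2,l-1}$ and the induction hypothesis on $\kappa_{l-1}$, rules out everything but $\kappa_l>\sigma_{l-2,l-1}$. Step~3 then expresses ${\bm S}(X_k)-{\bm S}(\bar X_k)$ as a telescoping sum over the $-e_d$ increments, with each summand nonnegative by Step~1 and vanishing before $\sigma_{d-1,d}$ by Step~2 and the definition of $\sigma_{d-1,d}$; this gives both \eqref{e:XbarX3} and \eqref{e:XbarX4} without any new case analysis. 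What each approach buys: the paper's version verifies all the claimed inequalities in one uniform case loop, so the bookkeeping is explicit but repetitive; your version trades that uniformity for shorter, more modular steps and makes the key mechanism behind \eqref{e:XbarX2} (a discrepancy must propagate forward through the tandem in the order $\partial_1,\partial_2,\ldots$) visible in a single clean lemma about $\kappa_l$. One point worth spelling out in a final write-up is the dichotomy you use in Step~2 -- that $X_m\in\partial_{l-1}$ forces $m\ge\sigma_{l-2,l-1}$ or $m\le\sigma_{l-3,l-2}$ -- which rests on the precise recursive definition of $\sigma_{j-1,j}$ as an infimum over $k>\sigma_{j-2,j-1}$; as stated the argument is correct, but the reader should see that the ``no visits in between'' property is being invoked.
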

Note that \eqref{e:XbarX3} holds for all $j$, i.e., 
${\bm S}( X_k) = {\bm S}(\bar{X}_k)$, $k \le \sigma_{d-1,d}.$
\begin{proof}
The processes $X$ and $\bar{X}$ have the same dynamics except on $\partial_1$ where only $X$ is constrained and by assumption \eqref{e:sameinitcond}
 they start from the same point.
Therefore, until they hit $\partial_1$ they move together, i.e.,
\[
X_k = \bar{X}_k
\]
for $k \le \sigma_{0,1}.$ These prove \eqref{e:XbarX1}, \eqref{e:XbarX2}
and \eqref{e:XbarX3} for $j=0.$
For $j \ge 1$ we will use induction. Assume 
\eqref{e:XbarX1}, \eqref{e:XbarX2} and \eqref{e:XbarX3} 
hold for $j=j_0 < d-1$;
let us prove that they will also hold for $j=j_0 + 1.$ 
We will do this by another
induction on $k$,
$  \sigma_{j_0, j_0+1} \le k \le \sigma_{j_0 + 1, j_0 + 2}.$ 
Note that there is a nested induction here, one induction on $j$ another on
$k$- we will refer to the induction on $j$ as the outer induction
and to the one on $k$ as the inner induction.
For $k = \sigma_{j_0,j_0 + 1}$ the statements hold by the 
outer induction hypothesis.
Now assume that \eqref{e:XbarX1}, \eqref{e:XbarX2} and
\eqref{e:XbarX3}, $j=j_0 +1$, 
hold for $k \le k_0 $ for some $\sigma_{j_0,j_0+1} \le  k_0 < \sigma_{j_0+1,j_0+2}.$
We want to show that they must also hold for $k=k_0 + 1.$ We argue based on the possible positions of $X$ and $\bar{X}$
at time $k_0$:
\begin{enumerate}
\item $X_{k_0} \in {\mathbb Z}_+^d - \bigcup_{j=1}^d \partial_j$:
this and the inner induction hypothesis imply $\bar{X}_{k_0}(l) > 0$ for $l=2,3,...,d$;
furthermore $\bar{X}$ is not constrained on $\partial_1$. These imply
\[
X_{k+1} = X_k + I_{k+1}, \bar{X}_{k+1} = \bar{X}_k + I_{k+1},
\]
i.e., both $X$ and $\bar{X}$ change by the same increment. Therefore, 
all of the relations \eqref{e:XbarX1}, \eqref{e:XbarX2} and
\eqref{e:XbarX3}
are preserved from time $k=k_0$ to $k=k_0 + 1.$

\item $X_{k_0}$ can also be on the boundary of ${\mathbb Z}_+^d$; recall that $\sigma_{j_0 + 1,j_0 + 2}$ is the first
time $X$ hits $\partial_{j_0 + 2}$ after time $\sigma_{j_0, j_0+1}$. Therefore, $X_{k_0} \notin \partial_{j_0 + 2}$, since
$\sigma_{j_0, j_0 +1} \le k_0 < \sigma_{j_0 + 1, j_0 + 2}$.
Then if $X_{k_0}$ is on the boundary of ${\mathbb Z}_+^d$ it must be on one of the following:
\[
X_{k_0} \in  
\partial_{\mathcal M} \doteq 
\left(\bigcap_{m \in {\mathcal M}} \partial_m \right) \bigcap\left( \bigcap_{m \in {\mathcal M}^c} \partial_m^c\right) 
\]
for some
${\mathcal M} \subset\{1,2,3,...,j_0 + 1, j_0+3,...,d\}$: 
\begin{enumerate}
\item 
if $I_{k_0 + 1} = e_1$, or $I_{k_0 + 1} = -e_m + e_{m+1}$ for some $m \in {\mathcal M}^c$: 
the increment $e_1$ is not constrained for $X$ and $\bar{X}$ regardless of their position. For  the case
$I_{k_0 +1} = -e_m + e_{m+1}$:  $X_{k_0} \in \partial_m^c$ means $X_{k_0}(m) > 0$. This 
and the inner induction hypothesis (\eqref{e:XbarX1} and \eqref{e:XbarX2}) imply $\bar{X}_{k_0}(m) > 0$ if $m > 1$; furthermore
$\bar{X}$ is not constrained on $\partial_1$. These imply
\[
X_{k_0+1} = X_{k_0} + I_{k_0+1}, \bar{X}_{k_0+1} = \bar{X}_{k_0} + I_{k_0+1}.
\]
Once again this implies that the relations \eqref{e:XbarX1} and \eqref{e:XbarX2} are preserved
from time $k=k_0$ to $k= k_0 + 1.$ 
\item If $I_{k_0 + 1} = -e_m + e_{m+1}$ for $m \ge  (j_0 +1)+2$, $m \in {\mathcal M}$: 
$X_{k_0} \in \partial_{\mathcal M}$ implies $X_{k_0}(m) = 0.$
By the inner induction hypothesis $\bar{X}_{k_0}(m) = X_{k_0}(m)$ for $m \ge (j_0 + 1) + 2$.
Therefore, $\bar{X}_{k_0}(m) = 0$ as well. These imply that the increment
$-e_m + e_{m+1}$ is constrained both for $X$ and $\bar{X}$:
\begin{equation}\label{e:bothconstrained}
X_{k_0+1} = X_{k_0},  \bar{X}_{k_0+1} = \bar{X}_{k_0},
\end{equation}
and the relations \eqref{e:XbarX1}, \eqref{e:XbarX2} and \eqref{e:XbarX3} are trivially preserved
from time $k=k_0$ to $k= k_0 + 1.$ 
\item  If $I_{k_0 + 1} = -e_m + e_{m+1}$, $2 \le m  \le j_0 + 1$, $m \in {\mathcal M}$: we know by the induction hypothesis that
$\bar{X}_{k_0}(m) \ge X_{k_0}(m)$. If $\bar{X}_{k_0}(m) = X_{k_0}(m)$ then the increment $-e_m + e_{m+1}$ is constrained both
for $X$ and $\bar{X}$, \eqref{e:bothconstrained} holds and 
the relations \eqref{e:XbarX1}, \eqref{e:XbarX2} and \eqref{e:XbarX3}
are trivially preserved
from time $k=k_0$ to $k= k_0 + 1.$  If $\bar{X}_{k_0}(m) > X_{k_0}(m)$ then the increment $-e_m  + e_{m+1}$ is unconstrained
for $\bar{X}$ while it is constrained for $X$:
\[
X_{k_0+1} = X_{k_0},  \bar{X}_{k_0+1} = \bar{X}_{k_0} - e_m + e_{m+1}.
\]
The linearity of ${\bm S}$ and ${\bm S}(-e_m + e_{m+1}) = 0$ imply
that \eqref{e:XbarX3} is preserved at time $k_0 + 1.$ 
All of the components $\bar{X}(l)$, $l \neq m, m+1$ remain unchanged from $k_0$ to $k_0 + 1$.
Therefore, 
the relations \eqref{e:XbarX1} and \eqref{e:XbarX2} are trivially preserved for these components; in particular, this shows that
\eqref{e:XbarX2} holds at time $k_0 + 1$ with $j=j_0 + 1$
because $m, m+1 \le (j_0 + 1) + 2.$
To complete the proof it suffices to show that \eqref{e:XbarX1}
holds for $j=j_0 + 1$ and $k= k_0 +1$ for components $l=m$ and $l=m+1$.
For $l = m+1$, 
\[
\bar{X}_{k_0+1}(m+1) = 
\bar{X}_{k_0}(m+1) + 1 \ge X_{k_0}(m+1) = X_{k_0 + 1}(m+1).
\]
For $l= m$: recall that we are treating the case
 $\bar{X}_{k_0}(m) > X_{k_0}(m)$, i.e., 
 $\bar{X}_{k_0}(m) \ge X_{k_0}(m) + 1$. Therefore:
\[
\bar{X}_{k_0+1}(m+1) = \bar{X}_{k_0}(m+1) - 1 \ge X_{k_0}(m+1) = X_{k_0 + 1}(m+1);
\]
these prove that \eqref{e:XbarX1} holds at time $k_0 + 1$ with $j=j_0 + 1.$
\item Finally, it may happen that $1 \in {\mathcal M}$ and
$I_{k_0 + 1} = -e_1 + e_2$. In this case, $X_{k_0} \in \partial_1$
and therefore the increment $I_{k_0 + 1}$ is canceled by the constraining
map $\pi$ for $X$; $\bar{X}$ is unconstrained on $\partial_1$, therefore,
the increment $I_{k_0+1}$ is not constrained for $\bar{X}$. Therefore,
\[
\bar{X}_{k_0 + 1}(l) = \bar{X}_{k_0}(l), 
{X}_{k_0 + 1}(l) = X_{k_0}(l), l =3,4,...,d,
\]
and
\[
\bar{X}_{k_0 + 1}(2) = \bar{X}_{k_0}(2) + 1,
{X}_{k_0 + 1}(2) = X_{k_0}(2).
\]
These imply that the relation \eqref{e:XbarX1} and \eqref{e:XbarX2} for
$j = j_0 +1$
are preserved from time $k_0$ to $k_0 + 1.$
The preservation of \eqref{e:XbarX3} follows from the linearity of
${\bm S}$ and ${\bm S}(-e_1 + e_2) =1$ as in the last part.
\end{enumerate}
\end{enumerate}
This case by case analysis completes the inner induction step and hence
the outer induction step.

$X_k \in \partial_d$ for $k=\sigma_{d-1,d}$. 
If
$I_{k+1} = -e_d$ and $\bar{X}_k \notin \partial_d$ we have:
\[
X_{k+1} = X_k, \bar{X}_{k+1} = \bar{X_k} - e_d;
\]
an application of ${\bm S}$ to both sides of the above equations
and \eqref{e:XbarX3} imply ${\bm S}(X_{k+1}) = {\bm S}(\bar{X}_{k+1}) +1$;
thus 
${\bm S}(X_{k+1}) > {\bm S}(\bar{X}_{k+1})$ can happen
after time $\sigma_{d-1,d}$.
A case by case analysis parallel to the one given above shows that 
\eqref{e:XbarX4} is preserved at all times after $\sigma_{d-1,d}.$
\end{proof}

The previous lemma implies
\begin{lemma}\label{l:comparetau}
The stopping times $\tau_n$, $\bar\tau_n$ and $\sigma_{d-1,d}$ satisfy:
\begin{enumerate}
\item
for any $n \ge 0$, $\sigma_{d-1,d} \ge \tau_n $ if and only if
$\sigma_{d-1,d} \ge \bar{\tau}_n.$
\item
\begin{equation}\label{e:equaltau}
\tau_n = \bar{\tau}_n 
\end{equation}
over the event $\{ \sigma_{d,d+1} \ge \tau_n\}=\{ \sigma_{d,d+1} \ge \bar\tau_n\}.$
\item  \begin{equation}\label{e:comparetaun1}
\tau_n \ge \bar{\tau}_n 
\end{equation}
if $n < {\bm S}(x)$
and\begin{equation}\label{e:comparetaun2}
\tau_n \le \bar\tau_n 
\end{equation}
if $ n > {\bm S}(x).$
\end{enumerate}
\end{lemma}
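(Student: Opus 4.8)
The plan is to reduce everything to Lemma~\ref{l:beforesigmadd1}. That lemma gives two facts about the scalar process ${\bm S}$: first, ${\bm S}(X_k)={\bm S}(\bar X_k)$ for $k\le\sigma_{d-1,d}$ (the remark displayed just after the lemma), and second, ${\bm S}(X_k)\ge{\bm S}(\bar X_k)$ for $k>\sigma_{d-1,d}$ (equation~\eqref{e:XbarX4}). To these I would add the elementary observation that along either process ${\bm S}$ changes by $+1$, $0$ or $-1$ at each step, since the increments $e_1$, $-e_j+e_{j+1}$, $-e_d$ and each of their constrained versions have ${\bm S}$-value in $\{-1,0,1\}$. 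Because $X$ stays in ${\mathbb Z}_+^d$ we have $\tau_n=\inf\{k:{\bm S}(X_k)=n\}$, and $\bar\tau_n=\inf\{k:{\bm S}(\bar X_k)=n\}$ by definition. Thus the lemma becomes a statement about two integer paths with unit-bounded increments that coincide up to time $\sigma_{d-1,d}$ and, afterwards, have the first dominating the second (here $\sigma_{d,d+1}$ in part~(2) is read as $\sigma_{d-1,d}$).

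Parts~(1) and~(2) then follow directly. On $\{k\le\sigma_{d-1,d}\}$ the two paths agree, so $\{k\le\sigma_{d-1,d}:{\bm S}(X_k)=n\}=\{k\le\sigma_{d-1,d}:{\bm S}(\bar X_k)=n\}$. Hence if $\tau_n\le\sigma_{d-1,d}$ then ${\bm S}(\bar X_{\tau_n})={\bm S}(X_{\tau_n})=n$, giving $\bar\tau_n\le\tau_n\le\sigma_{d-1,d}$, and symmetrically; this is the ``iff'' of part~(1). On the common event $\{\sigma_{d-1,d}\ge\tau_n\}=\{\sigma_{d-1,d}\ge\bar\tau_n\}$ both hitting times lie in $[0,\sigma_{d-1,d}]$ and the two ${\bm S}$-paths coincide there, so their first passage times to level $n$ are equal, i.e. $\tau_n=\bar\tau_n$; this is part~(2).

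For part~(3) suppose first $n>{\bm S}(x)={\bm S}(X_0)={\bm S}(\bar X_0)$. If $\bar\tau_n\le\sigma_{d-1,d}$ then parts~(1)--(2) give $\tau_n=\bar\tau_n$ and we are done. Otherwise $\bar\tau_n>\sigma_{d-1,d}$, hence $\tau_n>\sigma_{d-1,d}$ too by part~(1); the common path ${\bm S}(X_k)={\bm S}(\bar X_k)$, $k\le\sigma_{d-1,d}$, starts strictly below $n$, has increments in $\{-1,0,1\}$, and never equals $n$ on this range, so it stays strictly below $n$, and in particular ${\bm S}(X_{\sigma_{d-1,d}})<n$. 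At time $\bar\tau_n>\sigma_{d-1,d}$, equation~\eqref{e:XbarX4} gives ${\bm S}(X_{\bar\tau_n})\ge{\bm S}(\bar X_{\bar\tau_n})=n$. Since ${\bm S}(X_\cdot)$ has increments in $\{-1,0,1\}$, the first time after $\sigma_{d-1,d}$ at which it reaches level $n$ exists (it jumps up by at most $1$ from values $<n$, so it lands exactly on $n$), and this time is $\le\bar\tau_n$; therefore $\tau_n\le\bar\tau_n$. The case $n<{\bm S}(x)$ is entirely symmetric: up to $\sigma_{d-1,d}$ both paths remain strictly above $n$, at time $\tau_n$ one has ${\bm S}(\bar X_{\tau_n})\le{\bm S}(X_{\tau_n})=n$ by~\eqref{e:XbarX4}, and the discrete intermediate value argument applied to ${\bm S}(\bar X_\cdot)$ yields $\bar\tau_n\le\tau_n$.

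I do not expect a serious obstacle; the one point needing care is the case where $\bar\tau_n$ (equivalently $\tau_n$, by part~(1)) exceeds $\sigma_{d-1,d}$, so that the two ${\bm S}$-processes have separated and one merely dominates the other. There the domination~\eqref{e:XbarX4} by itself does not compare the hitting times; one must combine it with the observation that, up to $\sigma_{d-1,d}$, the common value ${\bm S}(X_k)={\bm S}(\bar X_k)$ has not yet crossed level $n$ and hence sits strictly on the side of $n$ fixed by the sign of $n-{\bm S}(x)$, together with the discrete intermediate value property of unit-step integer paths.
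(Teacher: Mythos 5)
Your proof is correct and follows essentially the same route as the paper's: reduce to the scalar process ${\bm S}$ using Lemma~\ref{l:beforesigmadd1} (equality of paths up to $\sigma_{d-1,d}$, domination afterwards) together with the unit-increment intermediate-value property, and you also correctly read the $\sigma_{d,d+1}$ in part~(2) as the typo for $\sigma_{d-1,d}$. The only cosmetic difference is that your part~(3) splits on whether $\bar\tau_n\le\sigma_{d-1,d}$, whereas the paper simply combines \eqref{e:XbarX3} and \eqref{e:XbarX4} into ${\bm S}(\bar X_k)\le{\bm S}(X_k)$ for all $k$ and then applies the intermediate-value argument once, with no case split.
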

\begin{proof}
By definition
$\tau_n \le \sigma_{d-1,d}$ if and only if
\[
{\bm S}(X_k) =n,
\]
for some $k \le \sigma_{d-1,d}$ and
$\bar\tau_n \le \sigma_{d-1,d}$ if and only if
\[
{\bm S}(\bar{X}_k) =n,
\]
for some $k \le \sigma_{d-1,d}$.
By the previous lemma ${\bm S}(X_k) = {\bm S}({\bar X}_k)$ for 
$k \le \sigma_{d-1,d}.$ These imply the first two parts of the current lemma.
Similarly,
\[
\tau_n = \inf \{k \ge 0: {\bm S}(X_k) = n \},~
\bar\tau_n = \inf \{k \ge 0: {\bm S}(\bar{X}_k) = n \};
\]
${\bm S}(\bar{X}_k) = {\bm S}(X_k)$ for $k \le \sigma_{d-1,d}$
by Lemma \ref{l:beforesigmadd1}, \eqref{e:XbarX3}. Therefore,
for $\tau_n \le \sigma_{d,d+1}$
\[
\tau_n = \inf \{\sigma_{d-1,d} \ge k \ge 0: {\bm S}(X_k) = n \}
 = \inf \{\sigma_{d-1,d} \ge k \ge 0: {\bm S}(\bar{X}_k) = n \} = \bar{\tau}_n,
\]
i.e, \eqref{e:equaltau} holds.

The relations \eqref{e:XbarX3} and \eqref{e:XbarX4} imply that
\begin{equation}\label{e:alwaysless}
{\bm S}(\bar{X}_k) \le {\bm S}(X_k).
\end{equation}
for all $k \ge 0.$
We will argue the case when $n < {\bm S}(x)$, the opposite case is argued
similarly. By definition, ${\bm S}(X_{\tau_n}) = n.$ This and \eqref{e:alwaysless} imply ${\bm S}(\bar{X}_{\tau_n}) \le n.$ The process ${\bm S}(\bm \bar X)$ 
jumps by increments of $-1$ (happens when $\bar X$ jumps by $-e_d$)
 and $1$ (happens when $\bar X$ jumps by $e_1$). It follows that $\bar{X}$
must take all of the values $n,n+1,n+2,...,{\bm S}(x)$ in the time interval
$k \in \{0,1,2,...., \tau_n\}.$ This implies $\bar \tau_n \le \tau_n.$

\end{proof}

We now express the difference between the events
$\{\tau_n \le \tau_0\}$ and $\{\tau < \infty \} = \{\bar\tau_n < \infty \}$
in terms of the stopping times $\sigma_{d-1,d}$ and $\bar\tau_0.$
For two events $A$ and $B$ let $\Delta$ denote their symmetric difference:
$A \Delta B \doteq ( A - B) \cup (B - A).$
\begin{lemma}\label{l:boundondifference}
For $X_0 = x$,  $0 < {\bm S}(x) < n$
\begin{equation}\label{e:boundondifference}
\{\tau_n < \tau_0\}  \Delta \{ \bar\tau_n < \infty\}
\subset
\{ \tau_n < \tau_0,  \tau_n >\sigma_{d-1,d} \}
\cup \{ \bar\tau_0  < \bar\tau_n < \infty\}
\end{equation}
holds.
\end{lemma}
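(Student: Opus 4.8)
The plan is to decompose the symmetric difference into its two one-sided pieces, $\{\tau_n<\tau_0\}-\{\bar\tau_n<\infty\}$ and $\{\bar\tau_n<\infty\}-\{\tau_n<\tau_0\}$, and to show each is contained in one of the two events on the right of \eqref{e:boundondifference}. Throughout I would use the identification $\{\tau<\infty\}=\{\bar\tau_n<\infty\}$ recorded before Lemma \ref{l:beforesigmadd1}, the relation $\tau_0\ge\sigma_{d-1,d}$ noted after \eqref{d:sigma01}, and the ${\bm S}$-comparisons \eqref{e:XbarX3}, \eqref{e:XbarX4} of Lemma \ref{l:beforesigmadd1} together with the three conclusions of Lemma \ref{l:comparetau}. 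The hypothesis $0<{\bm S}(x)<n$ is what activates part (3) of Lemma \ref{l:comparetau} in the direction $\bar\tau_n\le\tau_n$.

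First I would treat the inclusion $\{\tau_n<\tau_0\}-\{\bar\tau_n<\infty\}\subset\{\tau_n<\tau_0,\ \tau_n>\sigma_{d-1,d}\}$. On this difference event $\tau_n<\tau_0$ holds, so it is enough to rule out $\tau_n\le\sigma_{d-1,d}$; but if $\tau_n\le\sigma_{d-1,d}$ then by part (1)/(2) of Lemma \ref{l:comparetau} (via \eqref{e:XbarX3}, since ${\bm S}(X_k)={\bm S}(\bar X_k)$ up to $\sigma_{d-1,d}$) we get $\bar\tau_n=\tau_n\le\sigma_{d-1,d}<\infty$, contradicting $\{\bar\tau_n<\infty\}^c$. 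Hence $\tau_n>\sigma_{d-1,d}$ on this piece, which is exactly the claimed containment. (Here I also need $\tau_n<\tau_0$ finite to invoke part (2); the event already supplies $\tau_n<\tau_0$, hence $\tau_n<\infty$.)

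Next I would handle the reverse inclusion $\{\bar\tau_n<\infty\}-\{\tau_n<\tau_0\}\subset\{\bar\tau_0<\bar\tau_n<\infty\}$. Suppose $\bar\tau_n<\infty$ but $\tau_n\ge\tau_0$. Since ${\bm S}(x)<n$, part (3) of Lemma \ref{l:comparetau} gives $\bar\tau_n\le\tau_n$, and combined with $\tau_0\le\tau_n$ and $\tau_0\ge\sigma_{d-1,d}$ this puts both $\bar\tau_n$ and $\sigma_{d-1,d}$ before $\tau_n$; in particular $\bar\tau_n<\infty$ is already given. The remaining point is $\bar\tau_0<\bar\tau_n$: I want to show that the $\bar X$-sum reaches $0$ strictly before it reaches $n$. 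On $\{\tau_n\ge\tau_0\}$ the original walk empties (${\bm S}(X_{\tau_0})=0$) no later than it reaches level $n$; I would argue that ${\bm S}(\bar X)$ must also have hit $0$ by time $\tau_0$: up to $\sigma_{d-1,d}$ we have ${\bm S}(\bar X_k)={\bm S}(X_k)$ by \eqref{e:XbarX3}, and since $\tau_0\ge\sigma_{d-1,d}$, at time $\sigma_{d-1,d}$ the two sums agree; between $\sigma_{d-1,d}$ and $\tau_0$ the walk $X$ decreases its sum (only $-e_d$ jumps change ${\bm S}(X)$ downward while $e_1$ jumps would increase it, but no $e_1$-increment can be accepted while heading to $\tau_0$ before $\tau_n$ — more carefully, one uses that ${\bm S}(\bar X_k)\le{\bm S}(X_k)$ for all $k$ by \eqref{e:XbarX4} together with the unit-step structure of ${\bm S}(\bar X)$, which jumps by $\pm1$), so that when ${\bm S}(X)$ first hits $0$ the dominated quantity ${\bm S}(\bar X)\le 0$ has already crossed $0$; since $\bar X$ lives in $\mathbb Z\times\mathbb Z_+^{d-1}$ its sum can be negative, so "crossing $0$" means $\bar\tau_0$ (defined as the first hitting of the sum-value $0$) has occurred, giving $\bar\tau_0\le\tau_0\le\tau_n$, hence $\bar\tau_0<\bar\tau_n$ once we also know $\bar\tau_n\le\tau_n$ and sharpen the inequality to strict using that ${\bm S}(\bar X)$ cannot equal both $0$ and $n$ at the same time and $n>0$.

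The step I expect to be the main obstacle is the second inclusion, specifically pinning down $\bar\tau_0<\bar\tau_n$ rather than just $\bar\tau_0\le\bar\tau_n$: one has to track the $\pm1$ excursions of ${\bm S}(\bar X)$ carefully against those of ${\bm S}(X)$ past time $\sigma_{d-1,d}$, using \eqref{e:XbarX4} to keep ${\bm S}(\bar X)\le{\bm S}(X)$ and the discreteness of the steps to guarantee that once ${\bm S}(X)$ descends to $0$ the quantity ${\bm S}(\bar X)$ has genuinely reached the value $0$ at some earlier time; handling the boundary case where ${\bm S}(X_{\tau_0})=0=n$ is excluded by $n>0$, and the edge case $\bar\tau_n=\bar\tau_0$ is excluded since the sum cannot be simultaneously $0$ and $n\ge1$. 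The first inclusion, by contrast, is a short logical deduction from Lemma \ref{l:comparetau}(1)–(2). Once both inclusions are established, taking the union gives \eqref{e:boundondifference} and the proof is complete.
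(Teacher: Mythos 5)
Your decomposition is genuinely different from the paper's and arguably cleaner: the paper first splits both events by whether the relevant hitting time occurs before or after $\sigma_{d-1,d}$ and only later introduces $\bar\tau_0$, whereas you go directly to the two one-sided pieces of the symmetric difference and show each one lands in exactly one term on the right of \eqref{e:boundondifference}. The first inclusion is short and correct. Your core idea for the second inclusion is also right, and is essentially the argument the paper uses implicitly for its step ``$\tau_0\ge\bar\tau_0$'': combine \eqref{e:XbarX3} and \eqref{e:XbarX4} to get ${\bm S}(\bar X_k)\le{\bm S}(X_k)$ for all $k$, note ${\bm S}(\bar X_0)={\bm S}(x)>0$ and ${\bm S}(\bar X_{\tau_0})\le{\bm S}(X_{\tau_0})=0$, and use the $\pm 1$ step structure of ${\bm S}(\bar X)$ to force a hit of level $0$ by time $\tau_0$, yielding $\bar\tau_0\le\tau_0$.

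There is, however, a sign error that prevents the final chain of inequalities from closing as written. Part (3) of Lemma \ref{l:comparetau} for the case $n>{\bm S}(x)$ is \eqref{e:comparetaun2}: $\tau_n\le\bar\tau_n$, not $\bar\tau_n\le\tau_n$ as you assert. With your stated direction you only get $\bar\tau_0\le\tau_n$ and $\bar\tau_n\le\tau_n$, which is silent about the order of $\bar\tau_0$ and $\bar\tau_n$. With the correct direction the chain becomes $\bar\tau_0\le\tau_0\le\tau_n\le\bar\tau_n$; since ${\bm S}(\bar X)$ cannot equal both $0$ and $n>0$ at the same time, $\bar\tau_0\neq\bar\tau_n$, and strictness follows, i.e.\ $\bar\tau_0<\bar\tau_n<\infty$. (The correct direction also gives you $\tau_n<\infty$ and hence $\tau_0<\infty$ from $\bar\tau_n<\infty$ without appealing to stability.) Once this is fixed, your two inclusions hold and together establish \eqref{e:boundondifference}.
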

\begin{proof}
Break down $\{\tau_n < \tau_0\}$ and
$\{\bar\tau_n < \infty\}$ into two as
\begin{align}\label{e:decompose0}
\{ \tau_n < \tau_0 \} &= \{ \tau_n < \tau_0,  \tau_n \le \sigma_{d-1,d} \} \cup \{ \tau_n < \tau_0,  \tau_n > \sigma_{d-1,d} \},\\
\{ \bar\tau_n < \infty \} &= \{ \bar\tau_n < \infty,  \bar\tau_n \le \sigma_{d-1,d} \} \cup \{ \bar\tau_n < \infty,  \bar\tau_n > \sigma_{d-1,d} \}.
\notag
\end{align}
That $\tau_n = \bar\tau_n$ for  $\tau_n \le \sigma_{d-1,d}$ and
$\sigma_{d-1,d} \le \tau_n$ if and only if 
$\sigma_{d-1,d} \le \bar\tau_n$ imply
\begin{equation}\label{e:decompose1}
\{ \tau_n < \tau_0,  \tau_n \le \sigma_{d-1,d} \}
\subset 
\{ \bar\tau_n < \infty ,  \bar\tau_n \le \sigma_{d-1,d} \}.
\end{equation}
On the other hand,
\begin{equation}\label{e:decompose2}
\{ \bar\tau_n < \infty ,  \bar\tau_n \le \sigma_{d-1,d} \} = 
\{ \bar\tau_n < \infty ,  \bar\tau_n \le \sigma_{d-1,d}, \bar\tau_0  < \bar\tau_n \}  \cup
\{ \bar\tau_n < \infty ,  \bar\tau_n \le \sigma_{d-1,d}, \bar\tau_0 > \bar\tau_n \};
\end{equation}
$\tau_0 \ge \bar\tau_0$ 
by \eqref{e:comparetaun2}  and 
$\tau_n = \bar\tau_n$
for $\bar\tau_n \le \sigma_{d-1,d}$ by \eqref{e:equaltau};therefore
\[
\{ \bar\tau_n < \infty ,  \bar\tau_n \le \sigma_{d-1,d}, \bar\tau_0 > \bar\tau_n \} \subset
\{ \tau_n \le \sigma_{d-1,d}, \tau_0 > \tau_n \}.
\]
The last line, \eqref{e:decompose0}, \eqref{e:decompose1} and \eqref{e:decompose2} 
imply
\begin{align}\label{e:almostthere}
&\{\tau_n < \tau_0\}  \Delta \{ \bar\tau_n < \infty\}\\
&~~~~\subset \{ \tau_n < \tau_0,  \tau_n >\sigma_{d-1,d} \}
\cup \{ \bar\tau_n < \infty,  \bar\tau_n > \sigma_{d-1,d} \}
\cup \{ \bar\tau_n < \infty ,  \bar\tau_n \le \sigma_{d-1,d}, \bar\tau_0 \le \bar\tau_n \}.\notag
\end{align}
Next we decompose  $\{ \bar\tau_n < \infty,  \bar\tau_n > \sigma_{d-1,d} \}$
into two:
\[
\{ \bar\tau_n < \infty,  \bar\tau_n > \sigma_{d-1,d} \}
=
\{ \bar\tau_n < \infty,  \bar\tau_n > \sigma_{d-1,d} , \bar\tau_0 < \bar\tau_n \} \cup
\{ \bar\tau_n < \infty,  \bar\tau_n > \sigma_{d-1,d} , \bar\tau_0 >
\bar\tau_n \}.
\]
The assumption $ 0 < {\bm S}(x) < n$, \eqref{e:comparetaun1} and
\eqref{e:comparetaun2} imply 
$\tau_n \le \bar\tau_n$ and $\tau_0 \ge \bar\tau_0$; furthermore by
the first part of Lemma \ref{l:comparetau}
$\tau_n > \sigma_{d-1,d}$ if and only if $\bar\tau_n  > \sigma_{d-1,d}$;
these imply
\[
\{ \bar\tau_n < \infty,  \bar\tau_n > \sigma_{d-1,d} , \bar\tau_0 > \bar\tau_n\}
\subset
\{ \tau_n > \sigma_{d-1,d} , \tau_0 >\tau_n\}.
\]
The last two displays give:
\[
\{ \bar\tau_n < \infty,  \bar\tau_n > \sigma_{d-1,d} \}
\subset
\{ \bar\tau_n < \infty,  \bar\tau_n > \sigma_{d-1,d} , \bar\tau_0 < \bar\tau_n \} \cup
\{ \tau_n > \sigma_{d-1,d} , \tau_0 >\tau_n\}.
\]
This and \eqref{e:almostthere} imply \eqref{e:boundondifference}.
\end{proof}
By this lemma the numerator of the relative error \eqref{e:relerr}
is bounded by
\begin{align*}
| {\mathbb P}_{x_n}(\tau_n < \tau_0 ) - {\mathbb P}_{T_n(x_n)}(\tau < \infty) | &\le 
| {\mathbb P}_{x_n}( \{\tau_n < \tau_0 \} \Delta \{\bar\tau_n < \infty\}) \\
&\le 
{\mathbb P}_{x_n}( \tau_n < \tau_0,  \tau_n >\sigma_{d-1,d} )
+{\mathbb P}_{x_n}(\bar\tau_0  < \bar\tau_n < \infty).
\end{align*}
The next two subsections derives upperbounds on the last two probabilities.
The one following them finds a lower bound on the denominator 
${\mathbb P}_{x_n}(\tau_n < \tau_0)$ of the relative error. The last
subsection combines these to give a proof of Theorem \ref{t:convergence}.

\subsection{Upperbound on ${\mathbb P}_x(\bar\tau_n < \infty)$}\label{ss:tauy}
Recall that 
${\mathbb P}_x(\bar\tau_n < \infty = {\mathbb P}_{T_n(x)}( \tau < \infty)$.
The function
$y\mapsto {\mathbb P}_{y}( \tau < \infty)$
is $Y$-harmonic.
In Sections \ref{s:dg2} and \ref{s:hstq} we will compute this
$Y$-harmonic function exactly and see that
$y\mapsto {\mathbb P}_{y}( \tau < \infty)$
has a rather intricate structure.
It turns out to
be possible to derive the upperbounds we need
using much simpler $Y$-superharmonic
functions and in this subsection that is what we will do.
We will derive an upperbound on the probability 
${\mathbb P}_{x_n}(\bar\tau_n < \infty)={\mathbb P}_{T_n(x)}(\tau <  \infty)$;
the  bound we seek on ${\mathbb P}_{x_n}(\bar\tau_0 < \bar\tau_n < \infty)$
will follow from the bound on ${\mathbb P}_{y}(\tau <  \infty)$
by the Markov property of $Y$.

The $n$ variable plays no role here and therefore we will derive the bound and
the superharmonic functions in terms of the $Y$ process- the bound for the 
$\bar{X}$-process will follow by the change of variable $x = T_n(y).$

A real valued function $h$ is said to be $Y$-superharmonic on a
set $A \subset D_Y$ if
\[
{\mathbb E}_y[h(Y_1)] \le h(y), y \in A.
\]
We say $h$ is $Y$-superharmonic if $A= D_Y$.

Define
\[
h_{k,r} \doteq r^{y(1) - \sum_{j=2}^k y(j)}, k  \in \{1,2,3,...,d\}, r > 0.
\]
\begin{proposition}\label{p:hkr}
The function $h_{k,r}$ satisfies
\begin{equation}\label{e:howsuperharmonic}
{\mathbb E}_y[h_{k,r}(Y_1)] -h_{k,r}(y) =
\begin{cases}
h_{k,r}(y) \left(
\lambda \left(\frac{1}{r}-1\right) + \mu_1 (r-1)\right), &\text{if } k=1,\\
h_{k,r}(y) \left(
\lambda \left(\frac{1}{r}-1\right) + \mu_k (r-1) {\bm 1}_{\partial_k^c}(y)\right),
&\text{if } k \in \{2,3,...,d\}.
\end{cases}
\end{equation}
In particular, for $r \in (\rho,1)$,  $h_{1,r}$ is $Y$-superharmonic and for 
$k \in \{2,3,4,...,d\}$ $h_{k,r}$ is $Y$-superharmonic on $D_Y - \partial_k.$
\end{proposition}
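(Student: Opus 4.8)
The plan is to prove \eqref{e:howsuperharmonic} by a one-step computation: conditioning on the increment $J_1 = {\mathcal I}_1 I_1$ of $Y$, I determine for each possible increment the multiplicative factor by which $h_{k,r}$ changes, sum these factors against the jump probabilities, and then read off the ``in particular'' statement by factoring the resulting function of $r$.

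First I would list the increments of $Y$, obtained by applying ${\mathcal I}_1$ to those of $X$: they are $-e_1$ with probability $\lambda$, $e_1+e_2$ with probability $\mu_1$, $-e_i+e_{i+1}$ with probability $\mu_i$ for $i \in \{2,\dots,d-1\}$, and $-e_d$ with probability $\mu_d$. Since $\pi_1$ leaves the first coordinate unconstrained, $-e_1$ and $e_1+e_2$ are never blocked, $-e_i+e_{i+1}$ is blocked exactly on $\partial_i$ (for $i\ge 2$), and $-e_d$ exactly on $\partial_d$. Writing $\ell_k(y) = y(1) - \sum_{j=2}^k y(j)$ so that $h_{k,r}(y) = r^{\ell_k(y)}$, I would track, increment by increment, the change in $\ell_k$: the increment $-e_1$ lowers $\ell_k$ by one and is never blocked, contributing the factor $1/r$ in every position; the increment $e_1+e_2$ raises $\ell_k$ by one when $k=1$ (factor $r$) and leaves it unchanged when $k\ge 2$ (the gains on $y(1)$ and $y(2)$ cancel), and is never blocked; for $k\ge 2$ the unique increment lowering $y(k)$ — namely $-e_k+e_{k+1}$ if $k\le d-1$ and $-e_d$ if $k=d$ — raises $\ell_k$ by one off $\partial_k$ (factor $r$) and, being blocked on $\partial_k$, contributes factor $1$ there, which produces the indicator ${\bm 1}_{\partial_k^c}$; every remaining increment leaves $\ell_k$ unchanged, because it moves only coordinates with index outside $\{1,\dots,k\}$, or has zero net effect on $\sum_{j=2}^k y(j)$, or is blocked. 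The absence of a boundary indicator in the $k=1$ line is exactly the statement that $-e_1$ is the only increment lowering $y(1)$.

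Summing the factors against the probabilities and using $\lambda + \mu_1 + \dots + \mu_d = 1$ gives ${\mathbb E}_y[h_{1,r}(Y_1)]/h_{1,r}(y) = \lambda/r + \mu_1 r + (1-\lambda-\mu_1)$ and, for $k \in \{2,\dots,d\}$, ${\mathbb E}_y[h_{k,r}(Y_1)]/h_{k,r}(y) = \lambda/r + \mu_k r\,{\bm 1}_{\partial_k^c}(y) + \mu_k\,{\bm 1}_{\partial_k}(y) + (1-\lambda-\mu_k)$; subtracting $1$ and simplifying separately on $\partial_k$ and on $\partial_k^c$ yields \eqref{e:howsuperharmonic}. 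For the final assertion I would use the identity
\[
\lambda\left(\frac1r-1\right) + \mu_i(r-1) = \frac{\mu_i}{r}\,(r-\rho_i)(r-1),
\]
whose right-hand side is strictly negative for $r \in (\rho_i,1)$, hence for $r \in (\rho,1) \subseteq (\rho_i,1)$. Taking $i=1$ shows the bracket in the $k=1$ line of \eqref{e:howsuperharmonic} is negative at every $y$, so $h_{1,r}$ is strictly $Y$-superharmonic; taking $i=k$ shows the bracket in the $k\ge 2$ line is negative on $\partial_k^c = D_Y - \partial_k$, while on $\partial_k$ that bracket equals $h_{k,r}(y)\,\lambda(1/r-1) > 0$ for $r<1$, so $h_{k,r}$ is $Y$-superharmonic on $D_Y-\partial_k$ and strictly subharmonic on $\partial_k$.

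The only real work is the case analysis in the second paragraph: one has to make sure that for every position $y$ — interior, or on an arbitrary intersection of the faces $\partial_2,\dots,\partial_d$ — each of the $d+1$ increments is classified correctly as contributing $1/r$, $r$ or $1$, with both blocked increments and increments of zero net effect on $\ell_k$ contributing $1$. This is routine but needs care at the endpoint cases $k=1$ and $k=d$; I do not expect any conceptual obstacle beyond it.
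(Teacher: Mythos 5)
Your proof is correct and follows the same one-step conditioning computation as the paper: condition on $Y$'s first increment, track the effect on the exponent $\ell_k(y)=y(1)-\sum_{j=2}^k y(j)$, sum the contributing factors against the jump probabilities, and subtract $1$ to read off \eqref{e:howsuperharmonic}. The only cosmetic difference is in establishing the sign of $\lambda(1/r-1)+\mu_k(r-1)$: you use the explicit factorization $\frac{\mu_k}{r}(r-\rho_k)(r-1)$, whereas the paper's lemma notes the function is convex with roots at $r=\rho_k$ and $r=1$; these are the same fact stated two ways.
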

In the proof we will use the following basic fact:
\begin{lemma}
For $r \in (\rho,1)$ and any $k \in \{1,2,3,4,...,d\}$
\begin{equation}\label{e:strictlynegative}
\lambda (\frac{1}{r}-1) + \mu_k (r-1) < 0.
\end{equation}
\end{lemma}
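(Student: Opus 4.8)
The plan is to establish the inequality \eqref{e:strictlynegative} by analyzing the univariate function $\phi_k(r) \doteq \lambda(\tfrac{1}{r} - 1) + \mu_k(r-1)$ on the interval $r \in (\rho, 1)$ and showing it is negative throughout. First I would note that $\phi_k$ extends smoothly to $(0,\infty)$ and compute its value at the two natural endpoints: $\phi_k(1) = 0$, and $\phi_k(\rho_k) = \lambda(\tfrac{\mu_k}{\lambda} - 1) + \mu_k(\tfrac{\lambda}{\mu_k} - 1) = (\mu_k - \lambda) + (\lambda - \mu_k) = 0$. So $\phi_k$ vanishes at both $r = \rho_k$ and $r = 1$. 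Since $\phi_k(r) = \tfrac{\lambda}{r} + \mu_k r - (\lambda + \mu_k)$ is a strictly convex function of $r$ on $(0,\infty)$ (its second derivative is $2\lambda/r^3 > 0$), it lies strictly below the chord joining $(\rho_k, 0)$ and $(1,0)$, i.e., strictly below zero, on the open interval $(\rho_k, 1)$; and by convexity together with $\phi_k(1)=0$ it is also negative on $(\rho_k,1)$ directly.

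Next I would pass from $(\rho_k, 1)$ to the uniform interval $(\rho, 1)$. Recall $\rho = \max_i \rho_i$, so $\rho_k \le \rho < 1$, hence $(\rho, 1) \subset (\rho_k, 1)$ for every $k$. Therefore the strict negativity of $\phi_k$ on $(\rho_k,1)$, restricted to the sub-interval $(\rho,1)$, immediately gives $\phi_k(r) < 0$ for all $r \in (\rho,1)$ and all $k \in \{1,2,\dots,d\}$, which is exactly \eqref{e:strictlynegative}. One subtlety worth spelling out: if $\rho_k = \rho$ for some $k$ (which happens for the maximizing index), then the left endpoint $\rho$ of the interval $(\rho,1)$ equals the root $\rho_k$, and one must check that the inequality is strict in the open interval — it is, precisely because $\phi_k$ is \emph{strictly} convex and thus cannot vanish at a third point between its two roots.

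The argument is essentially a one-line convexity observation, so there is no real obstacle; the only thing to be careful about is to keep all inequalities strict and to invoke strict convexity (rather than mere convexity) so that the endpoints of the chord, where $\phi_k$ equals zero, are genuinely excluded. An alternative, completely elementary route that avoids even mentioning convexity: multiply through by $r > 0$ to get the equivalent claim $\lambda + \mu_k r^2 - (\lambda + \mu_k) r < 0$, i.e., $\mu_k r^2 - (\lambda+\mu_k) r + \lambda < 0$, and observe that the quadratic $q(r) = \mu_k r^2 - (\lambda + \mu_k)r + \lambda = (\mu_k r - \lambda)(r - 1)$ factors explicitly; for $r \in (\rho, 1) \subseteq (\rho_k, 1)$ one has $\mu_k r - \lambda = \mu_k(r - \rho_k) > 0$ while $r - 1 < 0$, so the product is strictly negative. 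This factored form is the cleanest presentation and is the one I would actually write down.
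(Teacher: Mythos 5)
Your proposal is correct and takes essentially the same route as the paper: observe that $\phi_k(r)=\lambda(1/r-1)+\mu_k(r-1)$ is convex on $(0,\infty)$, vanishes at $r=\rho_k$ and $r=1$, and hence is strictly negative strictly between its two roots. You are actually \emph{more} careful than the paper on one point: the paper asserts ``$r=\lambda/\mu_k < \rho$'', but since $\rho=\max_i\rho_i$, the correct relation is $\rho_k\le\rho$ with equality for the maximizing index, and it is precisely for that index that the left endpoint of $(\rho,1)$ coincides with a root of $\phi_k$. Your remark that one must invoke \emph{strict} convexity to keep the inequality strict on the open interval is exactly what justifies the conclusion in that borderline case, and it silently repairs the paper's small imprecision. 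Your alternative argument via the factorization $\mu_k r^2-(\lambda+\mu_k)r+\lambda=(\mu_k r-\lambda)(r-1)$ is equivalent but entirely elementary and avoids any appeal to convexity; it makes the strictness of the inequality transparent, since on $(\rho,1)\subseteq(\rho_k,1)$ the first factor is strictly positive and the second strictly negative. Either version is acceptable; the factored form is arguably the cleanest to present.
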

\begin{proof}
The function
$r\mapsto \lambda (\frac{1}{r}-1) + \mu_k (r-1)$
is convex for $r \in (0,\infty)$ and equals $0$ for $r=\lambda/\mu_k < \rho$
and $r=1$. It follows that it is strictly below $0$ on the interval
$(\rho,1).$ 
\end{proof}
\begin{proof}[Proof of Proposition \ref{p:hkr}]
For \eqref{e:howsuperharmonic}
The distribution of $Y_1$
and the definition of $h_{k,r}$ imply
\begin{equation}\label{e:EY1}
{\mathbb E}_y[h_{k,r}(Y_1)] =
h_{k,r}(y) \left(
\lambda \frac{1}{r} + \mu_k r{\bm 1}_{\partial_k^c}(y)
+ \mu_k {\bm 1}_{\partial_k}(y) + \sum_{j \neq k} \mu_j \right);
\end{equation}
subtracting $ h_{k,r}(y) = h_{k,r}(y) \left( \lambda_1 + \sum_{j=1}^d \mu_j\right)$ from the last expression gives \eqref{e:howsuperharmonic}.
For $k=1$, $Y_1$ is not constrained on $\partial_1$, therefore
\eqref{e:EY1} in that case reduces to
\[
{\mathbb E}_y[h_{k,r}(Y_1)] =
h_{k,r}(y) \left(
\lambda \frac{1}{\rho} + \mu_1 \rho + \sum_{j \neq 1} \mu_j \right).
\]
The rest of the argument remains the same for $k=1.$
The inequality \eqref{e:strictlynegative} and \eqref{e:howsuperharmonic} imply
\begin{align*}
{\mathbb E}_y[h_{k,r}(Y_1)] -h_{k,r}(y) &< 0 , y \in D_Y, \text{ if } k=1,\\
{\mathbb E}_y[h_{k,r}(Y_1)] -h_{k,r}(y) &< 0 ,  y \in D_Y -\partial_k\text{ if } k\in \{2,3,...,d\}.
\end{align*}
This proves the $Y$-superharmonicity of $h_{k,r}$ (on $D_Y$ for $k=1$
and on $D_Y-\partial_k$ for $k> 1$).
\end{proof}
Linear combinations of $h_{k,r}$ give
further $Y$-superharmonic functions:
define the constants
\begin{equation}\label{d:gamma}
\gamma_1 \doteq 1, 
\gamma_k \doteq \frac{1}{d}\frac{\min_{j < k}\gamma_j(\lambda(1 -1/r) + \mu_{j}(1-r))}{\lambda(1/r - 1) }, k=2,3,...,d.
\end{equation}
By \eqref{e:strictlynegative}, $\gamma_k > 0$ for  $r \in (\rho,1)$.
Now define
\[
h_{2,k,r} \doteq \sum_{j=1}^k \gamma_j h_{j,r}.
\]
\begin{proposition}\label{p:Ysuperharmonicfuns}
For any $k=1,2,3,...d$ and $r \in (\rho,1)$
the function $h_{2,k,r}$ is $Y$-superharmonic
\end{proposition}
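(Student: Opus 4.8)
The plan is to verify directly that $\mathbb{E}_y[h_{2,k,r}(Y_1)] - h_{2,k,r}(y) \le 0$ for every $y \in D_Y$, using the explicit one-step computation from Proposition \ref{p:hkr} and exploiting linearity of the expectation. Writing $h_{2,k,r} = \sum_{j=1}^k \gamma_j h_{j,r}$ with all $\gamma_j > 0$ (by \eqref{e:strictlynegative} applied to the definition \eqref{d:gamma}), we have
\[
\mathbb{E}_y[h_{2,k,r}(Y_1)] - h_{2,k,r}(y) = \sum_{j=1}^k \gamma_j \bigl(\mathbb{E}_y[h_{j,r}(Y_1)] - h_{j,r}(y)\bigr).
\]
The only subtlety is that for $j \ge 2$ the term $h_{j,r}$ is $Y$-superharmonic on $D_Y - \partial_j$ but \emph{not} on $\partial_j$: by \eqref{e:howsuperharmonic}, on $\partial_j$ the increment coming from the $\mu_j$-jump is suppressed, so $\mathbb{E}_y[h_{j,r}(Y_1)] - h_{j,r}(y) = h_{j,r}(y)\,\lambda(1/r - 1) > 0$ there. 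So the argument is a bookkeeping argument: on the set $\partial_j$ the positive contribution of the $j$-th summand must be dominated by the (strictly negative) contributions of the summands with smaller index.

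First I would fix $y \in D_Y$ and let $m = m(y)$ be the largest index $j \le k$ with $y \in \partial_j$ (set $m = 0$ if $y$ lies in no such boundary; note $\partial_1$ is never hit since $Y$ is unconstrained in its first coordinate, so in fact $m \ge 2$ whenever $m > 0$). For $j < m$ with $y \notin \partial_j$, and for $j = 1$, the $j$-th summand is $\le 0$ by Proposition \ref{p:hkr}; I will simply discard all of these except the one I need. The key observation is that $h_{j,r}(y) = r^{y(1) - \sum_{i=2}^j y(i)}$, and since the coordinates $y(i)$ for $i \ge 2$ are nonnegative, the exponents are ordered: $y(1) - \sum_{i=2}^{j}y(i)$ is nonincreasing in $j$, hence with $r < 1$ we get $h_{j,r}(y) \ge h_{m,r}(y)$ for every $j \le m$. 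Now I estimate: the total contribution of the ``bad'' boundary indices is at most
\[
\sum_{\substack{2 \le j \le k \\ y \in \partial_j}} \gamma_j\, h_{j,r}(y)\,\lambda(1/r - 1) \le d \cdot \max_{2 \le j \le k,\ y\in\partial_j} \gamma_j\, h_{j,r}(y)\,\lambda(1/r-1),
\]
and I will bound this against the single negative term $\gamma_{j_0} h_{j_0,r}(y)\bigl(\lambda(1/r-1) + \mu_{j_0}(r-1)\bigr)$ for a suitably chosen $j_0 < $ (the maximizing bad index), using the defining recursion \eqref{d:gamma} which is precisely engineered so that $\gamma_j \cdot d \cdot \lambda(1/r-1)$ equals $\min_{i<j}\gamma_i\bigl(\lambda(1-1/r)+\mu_i(1-r)\bigr) = -\min_{i<j}\gamma_i\bigl(\lambda(1/r-1)+\mu_i(r-1)\bigr)$. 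Combined with $h_{j,r}(y) \ge h_{m,r}(y)$ and $h_{j_0,r}(y) \ge h_{m,r}(y)$, the $h$-factors are controlled in the right direction, and the sum telescopes/dominates to something $\le 0$.

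The main obstacle I anticipate is getting the comparison of the $h_{j,r}(y)$ factors to point the right way simultaneously for all the relevant indices: the bad term with index $j$ carries the factor $h_{j,r}(y)$, whereas the compensating good term I want to use carries $h_{i,r}(y)$ for some $i < j$, and since $r<1$ we have $h_{i,r}(y) \ge h_{j,r}(y)$ — which is the \emph{wrong} direction if I am trying to show a positive quantity times $h_j$ is beaten by a negative quantity times $h_i \ge h_j$. Resolving this is exactly why the constant $\gamma_k$ in \eqref{d:gamma} contains the extra factor $1/d$ and a $\min$ over \emph{all} smaller indices rather than just the predecessor: one spends a factor $d$ to absorb the count of bad indices and enough slack in the $\gamma$'s to absorb the ratio $h_j(y)/h_i(y) \le 1$. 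So the careful step is to check that, for each $y$, pairing the bad index $j$ with the index $i<j$ achieving the minimum in \eqref{d:gamma} and using $h_{j,r}(y) \le h_{i,r}(y) \le 1 \cdot h_{i,r}(y)$ along with the factor $1/d$, every bad contribution is individually dominated; summing over the at most $d$ bad indices then gives the claim. Once this pairing/domination is set up, the remaining manipulations are routine algebra with \eqref{e:strictlynegative} and \eqref{d:gamma}, and I would present them compactly.
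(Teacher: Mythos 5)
There is a genuine gap, and it sits precisely at the step you flag as the ``careful'' one: the monotonicity of $h_{j,r}(y)$ in $j$ goes the other way, and your proposed pairing does not account for which indices are actually ``bad'' at the fixed point $y$.

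First, the direction of the $h$-comparison is reversed. The exponent $y(1) - \sum_{i=2}^j y(i)$ is nonincreasing in $j$, but since $r<1$ the map $a\mapsto r^a$ is \emph{decreasing}, so $h_{j,r}(y)$ is \emph{nondecreasing} in $j$. Hence for $i<j$ we have $h_{i,r}(y)\le h_{j,r}(y)$, not $\ge$: both the ``key observation'' ($h_{j,r}(y)\ge h_{m,r}(y)$ for $j\le m$) and the resolution paragraph ($h_j(y)/h_i(y)\le 1$ for $j>i$) assert the false inequality. In fact $h_{j,r}(y)/h_{i,r}(y)=r^{-\sum_{l=i+1}^j y(l)}\ge 1$, with equality iff $y(l)=0$ for $i<l\le j$. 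So when you try to dominate the positive term $\gamma_j h_{j,r}(y)\lambda(1/r-1)$ by the negative term at some $j_0<j$, the ratio $h_{j,r}(y)/h_{j_0,r}(y)\ge 1$ works against you and is not controlled by the $1/d$ in \eqref{d:gamma}; that $1/d$ only matches the \emph{count} of bad indices, not an uncontrolled ratio of $h$-values. Second, the index $i<j$ achieving the $\min$ in \eqref{d:gamma} is determined solely by the parameters, not by $y$; nothing ensures $y\notin\partial_i$, so the summand you intend to use as the compensating negative term may itself be positive.

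Both defects vanish if you use the relevant \emph{equality} rather than an inequality. If $j$ is bad ($y(j)=0$), then $h_{j,r}(y)=h_{j-1,r}(y)$; chaining, $h_{j,r}(y)=h_{j_0,r}(y)$ where $j_0=\max\{\,l<j:\ y(l)>0\,\}\vee 1$ is the anchor of $j$. That anchor is genuinely ``good'' at $y$ (or equals $1$, where $h_{1,r}$ is always superharmonic). Each maximal run of bad indices shares one $h$-value with the good index just below it, and the $1/d$ then absorbs the run length. This is exactly the mechanism in the paper's proof: the paper argues by induction on $k$, so at step $k{+}1$ only $y\in\partial_{k+1}$ needs checking; it sets $k_0=\max\{j\le k{+}1:\ y(j)>0\}\vee 1$, uses $h_{j,r}(y)=h_{k_0,r}(y)$ for $k_0\le j\le k{+}1$, compares against the single good term at $k_0$, and hands the block of indices below $k_0$ to the induction hypothesis. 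A non-inductive version of your approach is salvageable, but only by replacing your inequality with the anchoring equality and your ``argmin'' pairing with the anchor pairing.
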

\begin{proof}
We assume throughout that $r \in (\rho,1).$
The proof is by induction. By definition $h_{2,k,r} = h_{k,r}$ for $k=1$
and we know that $h_{1,r}$ is $Y$-superharmonic by the previous proposition.
Now assume that $h_{2,k,r}$ is $Y$-superharmonic for some $k< d$; we will prove
that $h_{2,k+1,r}$ must also be $Y$-superharmonic. The function
$h_{k+1,r}$ is $Y$-superharmonic on $D_Y - \partial_{k+1}$ by the
previous proposition; the function $h_{2,k,r}$ is $Y$-superharmonic
by the induction hypothesis. These
and  $\gamma_{k+1} > 0$ imply that $h_{2,k+1,r}$ is $Y$-superharmonic
on $D_Y - \partial_{k+1}.$ Therefore, it suffices to prove that
$h_{2,k+1,r}$ is $Y$-superharmonic on $\partial_{k+1}.$
Choose any $y\in \partial_{k+1}$ and let
\[
k_0 = \max\{j \le k+1: y(j) > 0 \} \vee 1,
\]
where, by convention the $\max$ of the empty set is $-\infty.$
By the induction hypothesis $h_{2,k_0-1,r}$ is $Y$-superharmonic on
$\partial_k$.
Therefore it suffices to prove that
\[
h_{2,k_0,k+1,r} \doteq \sum_{j=k_0}^{k+1} \gamma_j h_{j,r}
\]
satisfies the $Y$-superharmonicity condition for the chosen $y$.
The definition of $k_0$ implies $y(j) = 0$ for $k_0 < j \le k+1.$
This and the definition of $h_{k,r}$ imply
$h_{j,r}(y) = h_{k_0,r}(y)$ for all $k_0 \le j \le k+1$.
Then
\begin{align}
{\mathbb E}_y&[h_{2,k_0,k+1,r}(Y_1)] -h_{2,k_0,k+1,r}(y) \notag\\
&=
h_{k_0,r}(y) \left( \lambda \left(\frac{1}{r}-1\right) \sum_{j=k_0+1}^{k+1} \gamma_j
+ \gamma_{k_0} \left( \lambda\left(\frac{1}{r}-1\right) + \mu_{k_0}(r-1)\right)
\right).\label{e:upperboundk0}
\end{align}
The definition \eqref{d:gamma} and $j > k_0$ implies
\[
\gamma_j \le \frac{1}{d} 
\frac{1}{\lambda\left(\frac{1}{r}-1\right) }
\gamma_{k_0} \left( \lambda\left(1-\frac{1}{r}\right) +\mu_{k_0}(1-r)\right).
\]
Substituting this in \eqref{e:upperboundk0} gives
\begin{align*}
{\mathbb E}_y&[h_{2,k_0,k+1,r}(Y_1)] -h_{2,k_0,k+1,r}(y) \\
&\le
h_{k_0,r}(y) 
\gamma_{k_0}\frac{d-(k+1)+k_0}{d} 
\left( \lambda\left(\frac{1}{r}-1\right) +  \mu_{k_0}(r-1)\right) 
 < 0,
\end{align*}
which completes the induction step.
\end{proof}
Applying the $Y$-harmonic function $h_{2,d,r}$ to the process $Y$
we obtain the supermartingale
$h_{2,d,r}(Y_k)$, which gives us the bound we seek
on ${\mathbb P}_y(\tau < \infty)$:
\begin{proposition}
For $y \in D_Y$ and $ r \in (\rho, 1)$
\begin{equation}\label{e:boundonPtaufinite}
{\mathbb P}_y(\tau < \infty) \le\frac{1}{\gamma_d} h_{2,d,r}(y).
\end{equation}
\end{proposition}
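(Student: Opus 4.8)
The plan is to use the supermartingale $h_{2,d,r}(Y_k)$ obtained by applying the $Y$-superharmonic function $h_{2,d,r}$ (Proposition~\ref{p:Ysuperharmonicfuns}) to the process $Y$, together with the key observation that $h_{2,d,r}$ is bounded below by $\gamma_d$ on the target set $\partial B$. First I would record two elementary facts about $h_{2,d,r}$ on $D_Y$. The first is positivity: each $h_{j,r}(y) = r^{y(1)-\sum_{i=2}^j y(i)} > 0$ and $\gamma_j > 0$ for $r \in (\rho,1)$ (noted right after \eqref{d:gamma}), so $h_{2,d,r} > 0$ everywhere on $D_Y$. The second is the lower bound on the stopping set: for $y \in \partial B$ we have $y(1) = \sum_{j=2}^d y(j)$, hence the exponent $y(1) - \sum_{j=2}^d y(j)$ of $h_{d,r}$ vanishes and $h_{d,r}(y) = r^0 = 1$; since all other summands $\gamma_j h_{j,r}(y)$ are nonnegative, $h_{2,d,r}(y) \ge \gamma_d h_{d,r}(y) = \gamma_d$ on $\partial B$.

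Next I would apply optional stopping. By Proposition~\ref{p:Ysuperharmonicfuns}, $M_k \doteq h_{2,d,r}(Y_k)$ is a nonnegative supermartingale, so for each fixed $m$, $\mathbb{E}_y[M_{\tau \wedge m}] \le M_0 = h_{2,d,r}(y)$. On the event $\{\tau \le m\}$ we have $M_{\tau\wedge m} = M_\tau = h_{2,d,r}(Y_\tau) \ge \gamma_d$ since $Y_\tau \in \partial B$; dropping the contribution of $\{\tau > m\}$ (which is nonnegative) gives $\gamma_d\, \mathbb{P}_y(\tau \le m) \le \mathbb{E}_y[M_{\tau\wedge m}\,\mathbf{1}_{\{\tau \le m\}}] \le h_{2,d,r}(y)$. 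Letting $m \to \infty$ and using monotone convergence, $\mathbb{P}_y(\tau \le m) \uparrow \mathbb{P}_y(\tau < \infty)$, so $\gamma_d\,\mathbb{P}_y(\tau < \infty) \le h_{2,d,r}(y)$, which is \eqref{e:boundonPtaufinite}.

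The only subtlety — and the part that most deserves care rather than difficulty — is the justification of the time-truncation step: because $\tau$ need not be integrable and $Y$ is an unstable process (its first component is unconstrained and drifts), I avoid invoking optional stopping at the unbounded random time $\tau$ directly and instead use the bounded time $\tau \wedge m$, where the supermartingale property gives the inequality unconditionally, and only afterwards pass to the limit. Nonnegativity of $M_k$ is exactly what makes the discarded tail term harmless and makes the monotone limit valid, so no uniform integrability is needed. I would also remark that the bound holds for every $r \in (\rho,1)$, so one is free to optimize over $r$ later; no further structural input about $\partial B$ beyond the defining identity $y(1) = \sum_{j=2}^d y(j)$ is used.
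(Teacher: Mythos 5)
Your proposal is correct and follows essentially the same route as the paper: apply optional sampling to the nonnegative supermartingale $h_{2,d,r}(Y_k)$ at the bounded time $\tau\wedge m$, use $h_{2,d,r}\ge 0$ and $h_{2,d,r}\ge\gamma_d$ on $\partial B$ to discard the tail and bound the boundary term, then let $m\to\infty$. The paper's proof is a terser version of exactly this argument.
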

\begin{proof}
Let $m$ be a positive integer.
The optional sampling theorem applied to the supermartingale
$k \mapsto h_{2,d,r}(Y_k)$ at the stopping time $m \wedge \tau$ gives
\begin{align*}
h_{2,d,r}(y) &\ge {\mathbb E}_{y}[ h_{2,d,r}(Y_{\tau \wedge m})]\\
&= {\mathbb E}_{y}[ h_{2,d,r}(Y_{\tau}){\bm 1}_{\{\tau \le m \}}]
 +{\mathbb E}_{y}[ h_{2,d,r}(Y_m){\bm 1}_{\{\tau >m \}}].
\end{align*}
By definition $h_{2,d,r} \ge 0$ and 
$h_{2,d,r}/\gamma_d \ge h_{d,r} = 1$ on $\partial B.$ These
and the previous display imply
\[
h_{2,d,r}(y)/\gamma_d \ge {\mathbb P}_{y}(\tau \le m ).
\]
Letting $m \rightarrow \infty$ gives \eqref{e:boundonPtaufinite}.
\end{proof}
By definition
\begin{equation}\label{e:reminder}
{\mathbb P}_x(\bar\tau_n < \infty) = 
{\mathbb P}_{T_n(x)}(\tau < \infty).
\end{equation}
The bound on ${\mathbb P}_x(\bar\tau_0 < \bar\tau_n < \infty)$ now
follows from the previous proposition and the Markov property of
$\bar{X}$:
\begin{proposition}\label{p:boundonbtau0btaun}
For $x \in {\mathbb Z} \times {\mathbb Z}_+^{d-1}$ and $ 0 < {\bm S}(x) < n$
\begin{equation}\label{e:boundonbtau0btaun}
{\mathbb P}_x(\bar\tau_0 < \bar\tau_n < \infty) \le 
\rho^n \frac{1}{\gamma_d} \sum_{j=1}^d \gamma_j.
\end{equation}
\end{proposition}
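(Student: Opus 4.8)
The plan is to condition on the position of $Y$ at the stopping time $\bar\tau_0$ and feed it into the superharmonic estimate \eqref{e:boundonPtaufinite}. Recall that in $y$-coordinates $\bar{X}$ is the process $Y$ and $\bar\tau_n = \tau$, so $\{\bar\tau_0 < \bar\tau_n < \infty\} = \{\bar\tau_0 < \tau < \infty\}$. The first thing I would establish is a geometric fact locating $Y_{\bar\tau_0}$: by definition $\bar\tau_0$ is the first time ${\bm S}(\bar{X}_k) = 0$, and since ${\bm S}(\bar{X}_k)$ changes by $0$ or $\pm 1$ at each step while ${\bm S}(\bar{X}_0) = {\bm S}(x) > 0$, we must have ${\bm S}(\bar{X}_{\bar\tau_0}) = 0$ exactly; through $\bar{X}_k = T_n(Y_k)$ this reads $Y_{\bar\tau_0}(1) - \sum_{j=2}^d Y_{\bar\tau_0}(j) = n$, i.e. $Y_{\bar\tau_0}$ lies on the hyperplane $H \doteq \{y \in \Omega_Y : y(1) - \sum_{j=2}^d y(j) = n\}$. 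Since $n \ge 1$, $H$ is disjoint from $\partial B$, so on $\{\bar\tau_0 < \infty\}$ the process $Y$ has not yet visited $\partial B$ at time $\bar\tau_0$; hence on $\{\bar\tau_0 < \tau < \infty\}$ the shifted process $(Y_{\bar\tau_0 + k})_{k \ge 0}$ must reach $\partial B$, and the strong Markov property of $Y$ at $\bar\tau_0$ gives
\[
{\mathbb P}_x(\bar\tau_0 < \bar\tau_n < \infty) \le {\mathbb E}_x\!\left[{\bm 1}_{\{\bar\tau_0 < \infty\}}\, {\mathbb P}_{Y_{\bar\tau_0}}(\tau < \infty)\right] \le \sup_{y \in H} {\mathbb P}_y(\tau < \infty).
\]

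Next I would bound ${\mathbb P}_y(\tau < \infty)$ uniformly over $y \in H$ using \eqref{e:boundonPtaufinite}. Fix $r \in (\rho, 1)$. For $y \in H$ and $1 \le j \le d$ one has $y(1) - \sum_{l=2}^j y(l) = n + \sum_{l=j+1}^d y(l) \ge n$ (the empty sum at $j = d$ giving equality, and $y(l) \ge 0$ for $l \ge 2$), so $h_{j,r}(y) \le r^n$ because $r < 1$. Summing, $h_{2,d,r}(y) = \sum_{j=1}^d \gamma_j h_{j,r}(y) \le r^n \sum_{j=1}^d \gamma_j$, and \eqref{e:boundonPtaufinite} then yields ${\mathbb P}_y(\tau < \infty) \le \frac{r^n}{\gamma_d}\sum_{j=1}^d \gamma_j$ for all $y \in H$. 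Combined with the previous display this gives
\[
{\mathbb P}_x(\bar\tau_0 < \bar\tau_n < \infty) \le r^n\, \frac{1}{\gamma_d} \sum_{j=1}^d \gamma_j \qquad \text{for every } r \in (\rho, 1),
\]
and letting $r \downarrow \rho$ (the $\gamma_j = \gamma_j(r)$ of \eqref{d:gamma} depending continuously on $r$) produces \eqref{e:boundonbtau0btaun}.

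I expect the main --- indeed the only --- delicate point to be this last limit $r \downarrow \rho$: one must check that $\gamma_d(r)$ does not degenerate to $0$ along it. This is immediate when $\mu_d = \min_i \mu_i$ (then $\rho_d = \rho$ and the defining recursion \eqref{d:gamma} has a nonzero limit at $r=\rho$); in the general case one instead keeps $r$ fixed close to $\rho$, which replaces the bound $\rho^n$ by $r^n = \rho^{n\log_\rho r}$ with $\log_\rho r$ arbitrarily close to $1$, and this loss is absorbed into the $\epsilon$ of Theorem \ref{t:main1}. The remaining ingredients --- the strong Markov decomposition at $\bar\tau_0$ and the elementary estimate of $h_{2,d,r}$ on $H$ --- are routine once $Y_{\bar\tau_0}$ has been pinned to the hyperplane $H$.
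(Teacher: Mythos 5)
Your proof is correct and follows essentially the same route as the paper's: strong Markov at $\bar\tau_0$, the supermartingale bound \eqref{e:boundonPtaufinite}, and a limit in $r$; the paper works in $x$-coordinates and applies the Markov property last, you work in $y$-coordinates and apply it first, but these are cosmetic differences. One genuinely useful thing you add is the warning about the limit $r\downarrow\rho$: the paper disposes of it with a one-line ``by continuity,'' but that claim is suspect in general, since whenever $\arg\min_i\mu_i<d$ one has $\lambda(1-1/r)+\mu_{j^*}(1-r)\to 0$ for the minimizing index $j^*<d$, the recursion \eqref{d:gamma} then forces $\gamma_k(r)\to 0$ for all $k>j^*$, and in particular $\gamma_d(r)\to 0$, so the right-hand side of the display degenerates; your remedy (fix $r=\rho^{1-\epsilon'}\in(\rho,1)$ and absorb the factor $r^n/\rho^n$ into the $\epsilon$ of Theorem~\ref{t:main1}) is the correct way to close that gap, and the special case $\mu_d=\min_i\mu_i$ you single out is indeed the only one where the naive limit works directly.
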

\begin{proof}
By \eqref{e:reminder}, \eqref{e:boundonPtaufinite} in $x$ coordinates
is
\[
{\mathbb P}_x(\bar\tau_n < \infty) \le 
\frac{1}{\gamma_d}h_{2,d,r}(T_n(x))= 
\frac{1}{\gamma_d}\sum_{j=1}^d \gamma_j h_{d,r}(T_n(x)) = 
\frac{1}{\gamma_d}\sum_{j=1}^d \gamma_j 
r^{n- \sum_{k=1}^{j} x(k)}.
\]
This, $x \in {\mathbb Z} \times {\mathbb Z}_+^{d-1}$ and $0 < r < 1$ imply
\[
{\mathbb P}_x(\bar\tau_n < \infty) \le r^{n}\frac{1}{\gamma_d} \sum_{j=1}^d \gamma_j
\]
for ${\bm S}(x) = 0.$ The previous display is true for any $r \in (\rho,1)$;
by continuity it is also true for $r=\rho.$
The strong Markov property of $\bar{X}$ implies
\[
{\mathbb P}_x(\bar\tau_0 < \bar\tau_n < \infty) =
{\mathbb E}_{x}\left[{\mathbb P}_{\bar{X}_{\bar\tau_0}}( \bar\tau_n < \infty)\right].
\]
This, ${\bm S}(X_{\bar\tau_0}) = 0$ 
and the previous display imply \eqref{e:boundonbtau0btaun}.
\end{proof}
\subsection{Upperbound on ${\mathbb P}_x(\sigma_{d-1,d} < \tau_n < \tau_0)$}\label{ss:sigmad1d}
The goal of this subsection is to prove the following bound:
\begin{proposition}\label{p:upperboundonlong}
For any $\epsilon > 0$ there exists $n_0 > 0$ such that
\[
{\mathbb P}_{x}(\sigma_{d-1,d} < \tau_n < \tau_0) < \rho^{n(1-\epsilon)},
\]
for $n > n_0$ and $x \in A_n.$
\end{proposition}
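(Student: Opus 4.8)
The plan is to reduce, via the strong Markov property of $X$ at $\sigma_{0,1}$, to an estimate started from $\partial_1$: on $\{\sigma_{d-1,d}<\tau_n<\tau_0\}$ one has $\sigma_{0,1}\le\sigma_{d-1,d}<\tau_n\wedge\tau_0$, so ${\mathbb P}_x(\sigma_{d-1,d}<\tau_n<\tau_0)\le\sup_{z\in\partial_1,\,{\bm S}(z)<n}{\mathbb P}_z(\sigma_{d-1,d}<\tau_n<\tau_0)$ (with the convention $\sigma_{0,1}=0$ on the right). The reason this helps is that on $\partial_1$ the simplest $Y$-superharmonic function of subsection \ref{ss:tauy} is already small: applied to $X$, $h_{k,r}$ reads $r^{n-\sum_{l\le k}x(l)}$ in $x$-coordinates, so $h_{1,r}\equiv r^n$ on $\partial_1$.

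Fix $r\in(\rho,1)$, and for $z\in\partial_1$ with ${\bm S}(z)<n$ build a supermartingale for $X$ started at $z$ by concatenating the functions $h_{2,j,r}$ of Proposition \ref{p:Ysuperharmonicfuns} along the stages $[\sigma_{j-1,j},\sigma_{j,j+1})$, $j=1,\dots,d$ (with $\sigma_{0,1}=0$, $\sigma_{d,d+1}=\infty$): during stage $j$ put $S_k=h_{2,j,r}(X_k)/\Pi_j$, where $\Pi_j=\prod_{i=2}^{j}(1+\gamma_i/\gamma_{i-1})$, $\Pi_1=1$. Three facts drive this. (i)~$h_{2,j,r}$ is $Y$-superharmonic off $\partial_1$, and since $X$ and $\bar X$ agree off $\partial_1$ while on $\partial_1$ the increment $-e_1+e_2$ rejected by $X$ affects only the $h_{1,r}$-summand, Proposition \ref{p:hkr} shows that ${\mathbb E}[h_{2,j,r}(X_{k+1})\mid\mathcal F_k]-h_{2,j,r}(X_k)$ is $\le\mu_1(1-r)r^n$ when $X_k\in\partial_1$ and $\le 0$ otherwise; this defect is cancelled by passing to $\tilde S_k=S_k-\mu_1(1-r)r^n L_k$, $L_k=\#\{i<k:X_i\in\partial_1\}$. (ii)~A single increment of $X$ reaches at most one new boundary $\partial_j$, and at $k=\sigma_{j-1,j}$ one has $X_k(j)=0$, whence $h_{2,j,r}(X_k)=h_{2,j-1,r}(X_k)+\gamma_j r^{n-\sum_{l\le j-1}X_k(l)}\le(1+\gamma_j/\gamma_{j-1})h_{2,j-1,r}(X_k)$; dividing by $\Pi_j$ renders each stage switch non-increasing, so $\tilde S$ has no upward jumps and $\tilde S_0=h_{1,r}(z)=r^n$. (iii)~On $\{\sigma_{d-1,d}<\tau_n<\tau_0\}$ the time $\tau_n$ lies in stage $d$ with ${\bm S}(X_{\tau_n})=n$, so $h_{2,d,r}(X_{\tau_n})\ge\gamma_d$ and $S_{\tau_n}\ge\gamma_d/\Pi_d$.

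Applying optional sampling to $\tilde S$ at $\nu=\tau_n\wedge\tau_0\wedge(cn)$ for a constant $c$ to be chosen, and using $\tilde S_k\ge-\mu_1(1-r)r^n cn$ for $k\le cn$ together with $\tilde S_{\tau_n}\ge\gamma_d/\Pi_d-\mu_1(1-r)r^n cn\ge\gamma_d/(2\Pi_d)$ for $n$ large on the event $\{\sigma_{d-1,d}<\tau_n<\tau_0,\,\tau_n\le cn\}$, the inequality $r^n=\tilde S_0\ge{\mathbb E}_z[\tilde S_\nu]$ gives ${\mathbb P}_z(\sigma_{d-1,d}<\tau_n<\tau_0,\,\tau_n\le cn)\le C_d\,n\,r^n$, with $C_d$ depending only on $d$ and the $\gamma_j$. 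There remains ${\mathbb P}_z(cn<\tau_n<\tau_0)$; a routine large-deviation estimate — e.g.\ via a supermartingale $\theta^k s^{{\bm S}(X_k)}$ with $s>1$ and suitable $\theta<1$, or by counting the increments needed to drive ${\bm S}$ from below $n$ up to $n$ without hitting $\{{\bm S}=0\}$ — shows that for $c$ large this is $\le\rho^{n(1-\epsilon/2)}$ uniformly in $z\in A_n$. Writing $r^n=\rho^{n\log r/\log\rho}$ and taking $r$ close enough to $\rho$ that $\log r/\log\rho>1-\epsilon/4$, the two bounds combine to $C_d n r^n+\rho^{n(1-\epsilon/2)}\le\rho^{n(1-\epsilon)}$ for $n>n_0$, uniformly in the starting point, which is the assertion.

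I expect the main obstacle to be fact (ii) together with verifying that the concatenated $\tilde S$ really is a supermartingale across the random switching times $\sigma_{j-1,j}$, allowing $X$ to sit on several constraining boundaries at once — a longer and more delicate case analysis than its $d=2$ antecedent in \cite{sezer2018approximation}, of the same flavour as the proof of Lemma \ref{l:beforesigmadd1}. The auxiliary tail estimate on ${\mathbb P}_z(cn<\tau_n<\tau_0)$ is the other point requiring care; it is what forces the time truncation and, jointly with the necessity of choosing $r$ strictly above $\rho$ (needed for $\gamma_j>0$), is responsible for the loss of $\epsilon$ in the exponent.
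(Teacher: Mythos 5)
Your proposal follows the paper's proof in its essentials: the same concatenated supermartingale $S_k'=\Gamma_j h_{2,j,r}(T_n(X_k))$ on the stages $(\sigma_{j-1,j},\sigma_{j,j+1}]$ (your $1/\Pi_j=\prod_{i=2}^j\gamma_{i-1}/(\gamma_{i-1}+\gamma_i)$ is exactly the paper's $\Gamma_j$, and your switching inequality in (ii) is Lemma \ref{l:inequalityoverjumps}), the same compensation by a term of order $n\,r^n$ (the paper subtracts $k\cdot\text{const}\cdot r^n$, you subtract the local time $L_k\cdot\text{const}\cdot r^n$, which amounts to the same bound after $L_\nu\le\nu\le cn$), the same truncation at $cn$ using the ${\mathbb P}_x(\tau_n\wedge\tau_0>c_1n)\le\rho^{2n}$ estimate, and the same optional sampling at $\nu=\tau_n\wedge\tau_0\wedge cn$ followed by $r\downarrow\rho$.

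The one place you do something genuinely slicker is the estimate in (i) of the defect on $\partial_1$. The paper re-derives this defect by hand, decomposing $h_{2,j,r}(T_n(x))$ along the zero/nonzero pattern $L^*(x)$ of the coordinates of $x$ and re-telescoping the $\gamma_l$; you instead observe that $h_{2,j,r}$ is already known to be $Y$-superharmonic (Proposition \ref{p:Ysuperharmonicfuns}), that the only discrepancy between the $X$- and $Y$-transition kernels at $x\in\partial_1$ is the rejection of the increment $-e_1+e_2$, and that this increment leaves $h_{l,r}(T_n(\cdot))$ unchanged for $l\ge2$ (since $\sum_{j\le l}(x-e_1+e_2)(j)=\sum_{j\le l}x(j)$), so the whole defect is $\mu_1\bigl(h_{1,r}(T_n(x))-h_{1,r}(T_n(x-e_1+e_2))\bigr)=\mu_1(1-r)r^n$. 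This avoids redoing the $L^*(x)$ bookkeeping that is already buried inside the proof of Proposition \ref{p:Ysuperharmonicfuns}. (One should check, as you implicitly do, that this works even when $x$ lies on several $\partial_l$ simultaneously; it does, because the $Y$-superharmonicity is uniform over $D_Y$.) The extra strong-Markov reduction to a start on $\partial_1$ is harmless but unnecessary: the paper instead sets $\Gamma_0=\Gamma_1=1$ and $h_{2,0,r}\equiv r^n$ so that $S_0'=r^n$ for any $x\in A_n$. In short, correct, and the same route as the paper, with a cleaner treatment of the $\partial_1$ defect.
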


As in the previous section and as in two dimensions treated in
\cite{sezer2018approximation} we will construct a supermartingale
to upperbound 
${\mathbb P}_x(\sigma_{d-1,d} < \tau_n < \tau_0)$. The event
$\{\sigma_{d-1,d} < \tau_n < \tau_0\}$ consists of at most $d+1$ stages: 
the process $X$ starts on or away from $\partial_1$, then hits $\partial_2$
, then hits $\partial_3$, etc. and finally hits $\partial A_n$ after hitting
$\partial_d$ without ever hitting $0$. Roughly, 
the supermartingale will be constructed
by applying one of the functions $h_{2,k,r}$ to the process $X$ at
each of these stages. The next lemma is used to adjust the definition
so that the defined process remains a supermartingale as $X$ jumps from one
stage to the next.

For $k \in \{2,3,...,d\}$ define
\[
\gamma_{k-1,k} \doteq \frac{\gamma_{k-1}}{\gamma_{k-1} + \gamma_{k}}.
\]
\begin{lemma}\label{l:inequalityoverjumps}
For $k \in \{2,3,...,d\}$  and $r \in (\rho,1)$
\begin{equation}\label{e:lowerboundonratio}
\min_{y \in \partial_{k}} \frac{h_{2,k-1,r}(y)}{h_{2,k,r}(y)} \ge 
\gamma_{k-1,j}
\end{equation}
for $y \in \partial_{k}.$
\end{lemma}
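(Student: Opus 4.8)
The plan is to fix $y \in \partial_k$, i.e. $y(k)=0$, and bound the ratio $h_{2,k-1,r}(y)/h_{2,k,r}(y)$ from below by discarding all but one term in the numerator and using the vanishing of the $k$-th coordinate to identify two of the exponents appearing in $h_{2,k-1,r}$ and $h_{2,k,r}$. First I would recall the definitions: $h_{2,k,r}=\sum_{j=1}^{k}\gamma_j h_{j,r}$ with $h_{j,r}(y)=r^{y(1)-\sum_{i=2}^{j}y(i)}$, and observe that since $y(k)=0$ we have $h_{k,r}(y)=h_{k-1,r}(y)$, because the exponent of $h_{k,r}$ differs from that of $h_{k-1,r}$ only by the term $-y(k)=0$. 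Hence
\[
h_{2,k,r}(y)=\sum_{j=1}^{k-1}\gamma_j h_{j,r}(y)+\gamma_k h_{k,r}(y)
= h_{2,k-1,r}(y)+\gamma_k h_{k-1,r}(y).
\]

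Next I would use the fact that all the summands $\gamma_j h_{j,r}(y)$ are strictly positive (each $\gamma_j>0$ for $r\in(\rho,1)$ by the remark following \eqref{d:gamma}, and $h_{j,r}>0$ since $r>0$), so that $h_{2,k-1,r}(y)\ge \gamma_{k-1}h_{k-1,r}(y)$; moreover $h_{2,k-1,r}(y)$ and $\gamma_k h_{k-1,r}(y)$ are both positive, so the elementary inequality $\frac{a}{a+b}\ge \frac{a'}{a'+b}$ whenever $0<a'\le a$ (with $b>0$) gives
\[
\frac{h_{2,k-1,r}(y)}{h_{2,k,r}(y)}
= \frac{h_{2,k-1,r}(y)}{h_{2,k-1,r}(y)+\gamma_k h_{k-1,r}(y)}
\ge \frac{\gamma_{k-1}h_{k-1,r}(y)}{\gamma_{k-1}h_{k-1,r}(y)+\gamma_k h_{k-1,r}(y)}
= \frac{\gamma_{k-1}}{\gamma_{k-1}+\gamma_k}=\gamma_{k-1,k}.
\]
Since $y\in\partial_k$ was arbitrary, taking the minimum over $\partial_k$ yields \eqref{e:lowerboundonratio}. (The statement's right-hand side $\gamma_{k-1,j}$ is evidently a typo for $\gamma_{k-1,k}$; I would write $\gamma_{k-1,k}$ throughout.)

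There is no real obstacle here: the lemma is a short algebraic identity plus a monotonicity estimate, and the only point that must be stated carefully is the cancellation $h_{k,r}(y)=h_{k-1,r}(y)$ on $\partial_k$, which is exactly what makes the clean factor $\gamma_{k-1}/(\gamma_{k-1}+\gamma_k)$ appear. If one wanted the bound to be independent of which coordinates of $y$ beyond the $k$-th vanish, one notes that the argument never used anything about $y$ except $y(k)=0$ and $r>0$, so it is uniform over all of $\partial_k$. The only mild subtlety worth a sentence is that $h_{k-1,r}(y)>0$ is needed to cancel it from numerator and denominator; this holds because $r>0$ makes $r^{m}>0$ for every integer exponent $m$, with no positivity constraint on $y(1)$ (recall $Y$ is unconstrained in its first coordinate).
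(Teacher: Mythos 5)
Your proof is correct and uses essentially the same approach as the paper: the key observation $h_{k,r}(y)=h_{k-1,r}(y)$ on $\partial_k$, followed by writing $h_{2,k,r}(y)=h_{2,k-1,r}(y)+\gamma_k h_{k-1,r}(y)$ and a monotonicity argument (the paper factors out $h_{k-1,r}(y)$ and leaves the monotonicity step implicit in ``$\gamma_j,r>0$ imply''). Your identification of $\gamma_{k-1,j}$ as a typo for $\gamma_{k-1,k}$ is also correct.
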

\begin{proof}
By their definition
\[
h_{k-1,r}(y) = h_{k,r}(y)  = r^{y(1)- \sum_{j=2}^{k-1} y(j)}.
\]
for $y \in \partial_{k}.$
This and the definition of $h_{2,k-1,r}, h_{2,k,r}$ imply
\[
\frac{h_{2,k-1,r}(y)}{h_{2,k,r}(y)} 
=
\frac{
r^{y(1)- \sum_{j=1}^{k-1} y(j)} \left( \gamma_{k-1} + \sum_{j=1}^{k -2}
 \gamma_j r^{\sum_{l=2}^j y(l)} \right)}
{r^{y(1)- \sum_{j=1}^{k-1} y(j)} \left( \gamma_{k} +\gamma_{k-1}+ 
\sum_{j=1}^{k -2}
 \gamma_j r^{\sum_{l=2}^j y(l)} \right)}
\]
for $y \in \partial_{k}$;
this and $\gamma_j, r > 0$ imply
\eqref{e:lowerboundonratio}.
\end{proof}
Define
\[
\Gamma_j \doteq \prod_{l=2}^{j} \gamma_{l-1,l},
\]
and
\[
S_k' \doteq 
\Gamma_j h_{2,j,r}(T_n(X_k)) \text{ for }\sigma_{j-1,j} < k \le \sigma_{j,j+1}, 
j=0,1,2,3,...,d,
\]
where, by convention, $\Gamma_{0} = \Gamma_1 = 1$, $h_{2,0,r} = r^n$,
 $\sigma_{-1,0} = -1$ and $\sigma_{d,d+1} = \infty$; in particular,
$S_k' = r^n$ for $k \le \sigma_{0,1}$ and $S_k' = \Gamma_d h_{2,d,r}(T_n(X_k))$
for $k > \sigma_{d-1,d}.$
The supermartingle that we will use to upperbound the probability
${\mathbb P}_{x}(\sigma_{d-1,d} < \tau_n < \tau_0)$
is
\begin{equation}\label{e:defsupS}
S_k \doteq S_k' - 
k\left( \lambda \left(\frac{1}{r} -1\right) \sum_{j=1}^d \gamma_j \right) r^n.
\end{equation}
\begin{proposition}
The process $\{S_k, k=0,1,2,3,...\}$ is a supermartingale.
\end{proposition}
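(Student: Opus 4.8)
The plan is to verify $\mathbb{E}[S_{k+1}\mid\mathcal{F}_k]\le S_k$ directly. Since the second term of \eqref{e:defsupS} decreases by the fixed amount $\lambda(\tfrac1r-1)(\sum_{l=1}^d\gamma_l)r^n$ at every step, this amounts to
\[
\mathbb{E}[S_{k+1}'\mid\mathcal{F}_k]\ \le\ S_k'+\lambda\Big(\tfrac1r-1\Big)\Big(\sum_{l=1}^d\gamma_l\Big)r^n
\]
for every $k\ge 0$. The stage $j$ with $\sigma_{j-1,j}<k\le\sigma_{j,j+1}$ is $\mathcal{F}_k$-measurable, so I would distinguish an \emph{interior step} ($k<\sigma_{j,j+1}$, so $k$ and $k+1$ lie in the same stage) from a \emph{transition step} ($k=\sigma_{j,j+1}$ for some $j\le d-1$, so $k+1$ lies in stage $j+1$), each subdivided by whether $X_k\in\partial_1$. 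The one structural fact to record at the outset is that $T_n$ carries $\partial_m$, $m\ge2$, onto $\{y(m)=0\}$ and $\partial_1$ onto $\{y(1)=n\}$, so $T_n(X)$ has the same transition law as $Y$ except at points $y$ with $y(1)=n$, where the increment $I=-e_1+e_2$ (probability $\mu_1$) is killed for $X$ while it would move $Y$ by $J=\mathcal{I}_1(-e_1+e_2)=e_1+e_2$.

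For an interior step with $X_k\notin\partial_1$ (put $y\doteq T_n(X_k)$), $Y$-superharmonicity of $h_{2,j,r}$ (Proposition \ref{p:Ysuperharmonicfuns}) gives $\mathbb{E}[S_{k+1}'\mid\mathcal{F}_k]=\Gamma_j\mathbb{E}_y[h_{2,j,r}(Y_1)]\le\Gamma_j h_{2,j,r}(y)=S_k'$. For an interior step with $X_k\in\partial_1$ (so $y(1)=n$ and $h_{1,r}(y)=r^n$), the comparison above gives $\mathbb{E}[h_{2,j,r}(T_n(X_{k+1}))\mid\mathcal{F}_k]=\mathbb{E}_y[h_{2,j,r}(Y_1)]+\mu_1\big(h_{2,j,r}(y)-h_{2,j,r}(y+e_1+e_2)\big)$, and because $h_{l,r}(y+e_1+e_2)=h_{l,r}(y)$ for $l\ge2$ while $h_{1,r}(y+e_1+e_2)=r\,h_{1,r}(y)$, the parenthesis equals $\gamma_1(1-r)r^n=(1-r)r^n$. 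Thus everything reduces to the \emph{key estimate}: for every $y$ with $y(1)=n$ and every $j\le d$,
\[
\mathbb{E}_y[h_{2,j,r}(Y_1)]-h_{2,j,r}(y)+\mu_1(1-r)r^n\ \le\ \lambda\Big(\tfrac1r-1\Big)\Big(\sum_{l=1}^d\gamma_l\Big)r^n;
\]
granting it, $\mathbb{E}[S_{k+1}'\mid\mathcal{F}_k]\le\Gamma_j\big(h_{2,j,r}(y)+\lambda(\tfrac1r-1)(\sum_{l=1}^d\gamma_l)r^n\big)\le S_k'+\lambda(\tfrac1r-1)(\sum_{l=1}^d\gamma_l)r^n$ since $\Gamma_j\le1$.

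For the key estimate, expand $\mathbb{E}_y[h_{2,j,r}(Y_1)]-h_{2,j,r}(y)=\sum_{l=1}^j\gamma_l\big(\mathbb{E}_y[h_{l,r}(Y_1)]-h_{l,r}(y)\big)$ with Proposition \ref{p:hkr}; the $l=1$ term is $r^n\big(\lambda(\tfrac1r-1)+\mu_1(r-1)\big)$, and its $\mu_1(r-1)r^n$ exactly cancels the added $\mu_1(1-r)r^n$, leaving $r^n\lambda(\tfrac1r-1)+\sum_{l=2}^j\gamma_l h_{l,r}(y)\big(\lambda(\tfrac1r-1)+\mu_l(r-1)\mathbf{1}_{\{y(l)>0\}}\big)$. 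Group the indices $l\in\{2,\dots,j\}$ by $p(l)\doteq\max\{m\le l:y(m)>0\}$ (nonempty since $y(1)>0$), so that $h_{l,r}(y)=h_{p(l),r}(y)$. A group attached to some $p\ge2$ contains the term $\gamma_p h_{p,r}(y)(\lambda(\tfrac1r-1)+\mu_p(r-1))$, while each $l>p$ in the group has $y(l)=0$ and, by \eqref{d:gamma} with $m=p$ in the minimum (and $\lambda(1-\tfrac1r)=-\lambda(\tfrac1r-1)$), satisfies $\lambda(\tfrac1r-1)\gamma_l\le\tfrac1d\gamma_p\big(\mu_p(1-r)-\lambda(\tfrac1r-1)\big)$; since the group has fewer than $d$ indices and $\mu_p(1-r)-\lambda(\tfrac1r-1)>0$ by \eqref{e:strictlynegative}, summing shows the group value is $\le h_{p,r}(y)\gamma_p\big(\mu_p(r-1)+\mu_p(1-r)\big)=0$ — the cancellation already used in the proof of Proposition \ref{p:Ysuperharmonicfuns}. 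The only surviving group, attached to $p=1$, contributes $r^n\lambda(\tfrac1r-1)\sum_{l:p(l)=1}\gamma_l\le r^n\lambda(\tfrac1r-1)\sum_{l=2}^d\gamma_l$; adding the leftover $r^n\lambda(\tfrac1r-1)$ and using $\gamma_1=1$ gives the right-hand side.

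For a transition step $k=\sigma_{j,j+1}$ with $j\ge1$ we have $X_k\in\partial_{j+1}$, so Lemma \ref{l:inequalityoverjumps} yields $h_{2,j,r}(T_n(X_k))\ge\gamma_{j,j+1}h_{2,j+1,r}(T_n(X_k))$, hence $S_k'=\Gamma_j h_{2,j,r}(T_n(X_k))\ge\Gamma_{j+1}h_{2,j+1,r}(T_n(X_k))$. Applying the interior-step bound of the previous paragraph for stage $j+1$ to $h_{2,j+1,r}(T_n(X_{k+1}))$, then this inequality together with $\Gamma_{j+1}\le1$, gives $\mathbb{E}[S_{k+1}'\mid\mathcal{F}_k]\le\Gamma_{j+1}h_{2,j+1,r}(T_n(X_k))+\lambda(\tfrac1r-1)(\sum_{l=1}^d\gamma_l)r^n\le S_k'+\lambda(\tfrac1r-1)(\sum_{l=1}^d\gamma_l)r^n$; the case $j=0$ is the same after observing $S_{\sigma_{0,1}}'=r^n=\Gamma_1 h_{2,1,r}(T_n(X_{\sigma_{0,1}}))$ since $T_n(X_{\sigma_{0,1}})(1)=n$, and $k<\sigma_{0,1}$ and $k>\sigma_{d-1,d}$ fall under the interior step. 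The main obstacle is the key estimate of the third paragraph — confirming that the $\gamma$-cancellation behind Proposition \ref{p:Ysuperharmonicfuns} still absorbs the extra $\mu_1(1-r)r^n$ and leaves only the controlled surplus $\lambda(\tfrac1r-1)(\sum_{l=1}^d\gamma_l)r^n$; the rest is bookkeeping with the stage decomposition and the $T_n(X)$-vs-$Y$ comparison.
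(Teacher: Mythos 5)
Your proof is correct and follows essentially the same route as the paper's: the same stage decomposition via the $\sigma_{j-1,j}$ stopping times, the same use of Lemma \ref{l:inequalityoverjumps} at transitions, and the same grouping of the indices $l$ by blocks of consecutive zero coordinates (your $p(l)$ is the paper's $l_m$) to invoke the $\gamma$-cancellation from \eqref{d:gamma}. The one cosmetic difference is that you extract the single $\mu_1(1-r)r^n$ correction for the $\partial_1$ dynamics once and for all and then reason purely with $Y$-increments via Propositions \ref{p:hkr} and \ref{p:Ysuperharmonicfuns}, whereas the paper recomputes the $X$-increments directly on $\partial_1$; both lead to the same surplus $\lambda(\tfrac{1}{r}-1)(\sum_{l=1}^d\gamma_l)r^n$ that the subtracted drift term in \eqref{e:defsupS} absorbs.
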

\begin{proof}
The proof is a case by case analysis. We begin by $X_k \notin \partial_1$,
i.e., $X_k(1) > 0.$ There are two subcases two consider:
$k = \sigma_{j-1,j}$ for some $j \ge 1$ and 
$k \neq \sigma_{j-1,j}$ for all $j \ge 1$.
For 
$k \neq \sigma_{j-1,j}$, 
$S_k' = \Gamma_j h_{2,j,r}(T_n(X_k))$ 
$S_{k+1}' = \Gamma_j h_{2,j,r}(T_n(X_{k+1}))$ 
for some $j \in \{0,1,2,3,...,d\}$;
the functions $h_{2,j,r}$ are $Y$-superharmonic by Proposition
\ref{p:Ysuperharmonicfuns} and therefore 
$h_{2,j,r}(T_n(\cdot))$ are $X$-superharmonic on $\partial_1^c.$
It follows from these that
\begin{equation}\label{e:interiorinequalityskprime}
{\mathbb E}_x[S_{k+1}' | {\mathscr F}_k] \le S_k'
\end{equation}
over the event 
$\{ X_k \notin \partial_1\} \cap \{k \neq \sigma_{j-1,j}, j=1,2,3,...,d\}\}.$
If $k = \sigma_{j-1,j}$ for some $j \in \{2,3,4,...,d\}$ we have
$S_k' = \Gamma_{j-1}h_{2,j-1,r}(T_n(X_k))$
and $S_{k+1}' = \Gamma_j h_{2,j,r}(T_n(X_{k+1})$; $h_{2,j,r}$  is $Y$-superharmonic
and therefore $h_{2,j,r}(T_n(\cdot))$is $X$-superharmonic on $\partial_1^c$.
These imply
\begin{equation}\label{e:step1h2jr}
\Gamma_j h_{2,j,r}(T_n(X_{k}) \ge
{\mathbb E}[\Gamma_j h_{2,j,r}(T_n(X_{k+1}) |{\mathscr F}_k] =
{\mathbb E}[S_{k+1}' |{\mathscr F}_k] 
\end{equation}
over the event $\{k = \sigma_{j-1,j}\} \cap \{ X_k \notin \partial_1\}.$
That $X_{k} \in \partial_j$ for $k=\sigma_{j-1,j}$
and Lemma \ref{l:inequalityoverjumps}
imply
\[
S_{k}'=\Gamma_{j-1} h_{2,j-1,r}(T_n(X_{k}))\ 
\ge \Gamma_j h_{2,j,r}(T_n(X_k)).
\]
This and \eqref{e:step1h2jr} imply
\[
S_k' \ge {\mathbb E}[S_{k+1}' |{\mathscr F}_k] 
\]
over the event $\{k = \sigma_{j-1,j}\} \cap \{ X_k \notin \partial_1.\}.$
This and \eqref{e:interiorinequalityskprime} imply
$S_k' \ge {\mathbb E}[S_{k+1}' |{\mathscr F}_k]$;
subtracting 
$k\left( \frac{\lambda}{r} + \sum_{j=1}^d \mu_j \right) r^n$
from the left and 
$(k+1)\left( \frac{\lambda}{r} + \sum_{j=1}^d \mu_j \right) r^n$
from the right gives
\[
S_k \ge {\mathbb E}[S_{k+1} |{\mathscr F}_k] 
\]
over the event $\{X_k \notin \partial_1\}$. 

For $x \in {\mathbb Z}_+^d$, define
\[
L^*(x) \doteq \{ l \in \{2,3,4,...d\}, x(l) \neq 0 \};
\]
and 
\[
d^*(x) \doteq |L^*(x)|;
\]
$l_1(x) < l_2(x) < \cdots < l_{d^*}(x)$ are the members of $L^*$; two
conventions 1)
$l_{d^*+1}(x) = d+1$ and 2) $l_1 = d+1$
 if $d^* = 0$, i.e., if $L^*(x) = \emptyset.$
For $X_k \in \partial_1$, 
there are two cases to consider: 1) $k = \sigma_{j-1,j}$
for some $j$ and 2)  $k \neq \sigma_{j-1,j}$ for all $j$.
For the latter case 
\begin{equation}\label{e:skpp1}
S_k' = \Gamma_j h_{2,j,r}(T_n(X_k)),
S_{k+1}' = \Gamma_j h_{2,j,r}(T_n(X_{k+1})),
\end{equation}
for some $j$.
For ease of notation, let us abbreviate $l_m(X_{k})$ to $l_m$, 
and $d^*(X_{k})$ to $d^*$.
Decompose
$h_{2,j,r}(T_n(x))$ as
\begin{equation}\label{e:decomposeh2jr}
h_{2,j,r}(T_n(x)) = 
\sum_{l=1}^{(l_1-1) \wedge j}\gamma_l h_{l,r}(T_n(x)) 
+ \sum_{m=1}^{d^*} \sum_{l=l_m}^{(l_{m+1} -1)\wedge j} \gamma_l h_{l,r}(T_n(x)),
\end{equation}
where we use the conventions set above.
Let us begin by considering any of the inner sums in the
second sum in the last display. By the definition
of $l_m$ and $l_{m+1}$, $X_k(l_m) > 0$ and
$X_k(l) = 0$ for $l_m < l < l_{m+1}$. This implies
$h_{l,r}(T_n(X_k))  = h_{l_m,r}(T_n(X_k))$ 
for $l_m < l < l_{m+1}$. These, $l_m > 1$, Proposition \ref{p:hkr},
the definition \eqref{d:gamma} of $\gamma_l$ and the dynamics of $X$ imply
\begin{align*}
&{\mathbb E}
\left
[\sum_{l=l_m}^{(l_{m+1} -1)\wedge j} \gamma_l h_{l,r}(T_n(X_{k+1}))
| {\mathscr F}_k \right] -
\sum_{l=l_m}^{(l_{m+1} -1)\wedge j} \gamma_l h_{l,r}(T_n(X_k))\\
&~~~~=
h_{l,r}(T_n(X_k))
\left(
\lambda \left( \frac{1}{r} - 1 \right)
\sum_{l=l_m+1}^{(l_{m+1} -1)\wedge j} \gamma_l 
+ \gamma_{l_m} \left( \lambda\left(\frac{1}{r} -1\right) 
+ \mu_{l_m}(r-1) \right)\right)
\le 0.
\end{align*}
Summing the last inequality over $m$ gives
\begin{equation}\label{e:therestSkp}
{\mathbb E}
\left
[\sum_{m=1}^{d^*}\sum_{l=l_m}^{(l_{m+1} -1)\wedge j} \gamma_l h_{l,r}(T_n(X_{k+1}))
| {\mathscr F}_k \right] -
\sum_{m=1}^{d^*}\sum_{l=l_m}^{(l_{m+1} -1)\wedge j} \gamma_l h_{l,r}(T_n(X_k))\\
\le 0.
\end{equation}
Similarly, the definition of $l_1$ implies, $X_k(l) = 0$ for $l< l_1$,
this and $h_{l,r}(T_n(x)) = r^{n-\sum_{m=1}^l x(m)}$
imply $h_{l,r}(T_n(X_k)) = r^n$ for $l < l_1$
over the event $\{X_k \in \partial_1\}$.
These 
and the dynamics of $X$ imply
\begin{align*}
&{\mathbb E}
\left
[\sum_{l=1}^{(l_1 -1)\wedge j} \gamma_l h_{l,r}(T_n(X_{k+1}))
| {\mathscr F}_k \right] -
\sum_{l=1}^{(l_{1} -1)\wedge j} \gamma_l h_{l,r}(T_n(X_k))\\
&~~~~=
h_{1,r}(T_n(X_k))
\left(
\lambda \left( \frac{1}{r} - 1 \right)
\sum_{l=1}^{(l_{1} -1)\wedge j} \gamma_l 
\right) 
= 
r^n 
\left(
\lambda \left( \frac{1}{r} - 1 \right)
\sum_{l=1}^{(l_{1} -1)\wedge j} \gamma_l 
\right) 
\ge 0
\end{align*}
over the event $\{ X_k \in \partial_1\}.$ Putting together the last
display, \eqref{e:therestSkp}, \eqref{e:decomposeh2jr}
and $0 < \Gamma_j \le 1$
give
\begin{equation}\label{e:inforp1}
{\mathbb E}[ \Gamma_{j} h_{2,j,r}(T_n(X_{k+1}))| {\mathscr F}_k]
\le 
\Gamma_{j} h_{2,j,r}(T_n(X_{k})) + 
\left(
\lambda \left( \frac{1}{r} - 1 \right)
\sum_{l=1}^{d} \gamma_l 
\right)
\end{equation}
over the event 
$\cap_{j=1}^d \{X_k \in \partial_1, k \neq \sigma_{j-1,j}\}$;
this and \eqref{e:skpp1} imply
\[
{\mathbb E}[S_{k+1}' | {\mathscr F}_k] 
\le 
 S_{k}'  +
r^n
\left(
\lambda \left( \frac{1}{r} - 1 \right)
\sum_{l=1}^{d} \gamma_l 
\right).
\]
Moving the last expression to the left of the inequality sign
and
subtracting
$k 
\left(
\lambda \left( \frac{1}{r} - 1 \right)
\sum_{l=1}^{d} \gamma_l 
\right)
$ from both sides give
${\mathbb E}[S_{k+1} | {\mathscr F}_k] \le S_{k}$ over the same event.
It remains to show 
\begin{equation}\label{e:lastcase}
{\mathbb E}[S_{k+1} | {\mathscr F}_k] \le S_{k}
\end{equation}
over the event  
$\cup_{j=1}^d \{X_k \in \partial_1,  k = \sigma_{j-1,j} \}.$ 
In this case
\[
 S_{k}'=\Gamma_{j-1} h_{2,j-1,r}(T_n(X_{k})),
S_{k+1}' = \Gamma_j h_{2,j,r}(T_n(X_{k+1})),
\]
for some $j \in \{1,2,3,...,d\}$ and $X_k \in \partial_j.$
By Lemma \ref{l:inequalityoverjumps} 
\[
S_{k}'=\Gamma_{j-1} h_{2,j-1,r}(T_n(X_{k})) \ge
\Gamma_{j} h_{2,j,r}(T_n(X_{k}));
\]
this and \eqref{e:inforp1} imply \eqref{e:lastcase}.
\end{proof}
The upperbound
on
${\mathbb P}_{x}(\sigma_{d-1,d} < \tau_n < \tau_0)$
now follows from the supermartingale constructed above:
\begin{proof}[Proof of Proposition \ref{p:upperboundonlong}]
To use the supermartingale $S$ to bound 
${\mathbb P}_{x}(\sigma_{d-1,d} < \tau_n < \tau_0) < \rho^{n(1-\epsilon))}$
we need to truncate time by an application of the following
fact (see \cite[Theorem A.1.1]{thesis}):
there exists $c_1 > 0$ and $n_0 > 0$ such that
${\mathbb P}_x(\tau_n \wedge \tau_0 > c_1n)  \le \rho^{2n}$
for $n > n_0$.
Although they give the
same results, the truncation argument varies in \cite{sezer2018approximation,unlu2019excessive,kabran2020approximation};
below we closely follow the one given in \cite{kabran2020approximation}.
We decompose
${\mathbb P}_{x}(\sigma_{d-1,d} < \tau_n < \tau_0)$:
\begin{equation}\label{e:decomposesup}
{\mathbb P}_{x}(\sigma_{d-1,d} < \tau_n < \tau_0)
\le
{\mathbb P}_{x}(\sigma_{d-1,d} < \tau_n < \tau_0 \le c_6 n) + 
\rho^{2n}.
\end{equation}
To bound the last probability we apply the optional sampling theorem
to the supermartingale $S$ of \eqref{e:defsupS} at the bounded terminal time
$\eta = c_6 n \wedge \tau_0 \wedge \tau_n$:
\begin{align}
r^n = S_0 &\ge {\mathbb E}_{x}[ S_{\eta}]\notag\\
     &=	   {\mathbb E}_{x}\left[ S'_\eta - \eta \left( \lambda \left(\frac{1}{r} -1\right) \sum_{j=1}^d \gamma_j \right) r^n \right]\notag \\
	&\ge 
    	   {\mathbb E}_{x}\left[ S'_\eta\right] 
- c_6 n\left( \lambda \left(\frac{1}{r} -1\right) \sum_{j=1}^d \gamma_j \right) r^n \label{e:suparg1}
\end{align}
$S' > 0$ implies
\begin{equation}\label{e:suparg2}
{\mathbb E}_x[S'_\eta] \ge 
{\mathbb E}[S'_\eta {\bm 1}_{\{\sigma_{d-1,d} < \tau_n < \tau_0 \le c_6n\}}].
\end{equation}
Over the event $\{\sigma_{d-1,d} < \tau_n < \tau_0 \le c_6n \}$ we have:
\begin{align*}
\eta &= \tau_n, \\
S'_\eta &= \Gamma_d h_{2,d,r}(T_n(X_{\tau_n}))  
=
\Gamma_d \sum_{j=1}^d \gamma_j h_{j,r}(T_n(X_{\tau_n}))
\ge \Gamma_d \gamma_d h_{d,r}(T_n(X_{\tau_n})) = \Gamma_d \gamma_d.
\end{align*}
This, \eqref{e:suparg1} and \eqref{e:suparg2} imply
\[
\frac{1}{\gamma_d \Gamma_d} 
r^n \left( 1 + c_6 n\left( \lambda \left(\frac{1}{r} -1\right) \sum_{j=1}^d \gamma_j \right)\right)\ge 
{\mathbb P}_x(\sigma_{d-1,d} < \tau_n < \tau_0 \le c_6n).
\]
This inequality holds for any $r \in (\rho,1)$; it follows that it also holds
for $\rho$, i.e., 
\[
\frac{1}{\gamma_d \Gamma_d} 
\rho^n \left( 1 + c_6 n\left( \lambda \left(\frac{1}{r} -1\right) \sum_{j=1}^d \gamma_j \right)\right)\ge 
{\mathbb P}_x(\sigma_{d-1,d} < \tau_n < \tau_0 \le c_6n).
\]
The statement of the proposition follows from this and \eqref{e:decomposesup}.
\end{proof}

\subsection{Completion of the analysis} 
As the last step, we derive a lower bound on ${\mathbb P}_{x}(\tau_n < \tau_0).$
Following \cite{unlu2019excessive,kabran2020approximation} we will do this via subharmonic functions. Define
\[
x\in {\mathbb Z}_+^d \mapsto g_{i,n}(x) = h_{i,\rho_i}(T_n(x))= \rho_i^{n- \sum_{j=1}^i x(j)}, k=1,2,3,...d,
\]
and
\begin{equation}\label{e:defgn}
g_n(x) \doteq \max_{i \in \{ 1,2,3,...,d\}} g_{i,n}(x).
\end{equation}
\begin{proposition}\label{p:lowerboundP}
\begin{equation}\label{e:lowerboundP}
g_n(x)-\rho^n \le {\mathbb P}_x(\tau_n < \tau_0).
\end{equation}
\end{proposition}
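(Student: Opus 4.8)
The plan is to exhibit $g_n$ as an $X$-subharmonic function on $A_n$ that is dominated by $1$ on $\partial A_n$ and equals $\rho^n$ at the origin, and then to apply the optional sampling theorem to the submartingale $g_n(X_{k\wedge\tau_n\wedge\tau_0})$. Since $\{\tau_n<\tau_0\}$ and $\{\tau_0<\tau_n\}$ partition the sample space (recall $n\ge 1$, so $X=0$ cannot lie on $\partial A_n$), stopping at $\tau_n\wedge\tau_0$ and using these two boundary values will immediately yield $g_n(x)\le {\mathbb P}_x(\tau_n<\tau_0)+\rho^n$, which is \eqref{e:lowerboundP}. Throughout I fix $x\in A_n$; the cases $x\in\partial A_n$ and $x=0$ are trivial.

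First I would check that each $g_{i,n}=h_{i,\rho_i}\circ T_n$ is $X$-subharmonic on all of ${\mathbb Z}_+^d$. Plugging $r=\rho_i$ into \eqref{e:howsuperharmonic} and using $\lambda(1/\rho_i-1)+\mu_i(\rho_i-1)=(\mu_i-\lambda)+(\lambda-\mu_i)=0$, Proposition \ref{p:hkr} gives that $h_{i,\rho_i}$ is $Y$-harmonic on $\partial_i^c$ (in fact everywhere if $i=1$) and strictly $Y$-subharmonic on $\partial_i$; hence $h_{i,\rho_i}$ is $Y$-subharmonic. Because $T_n$ is an involution and $\bar{X}=T_n(Y)$, this is the same as saying $g_{i,n}$ is $\bar{X}$-subharmonic, and since $X$ and $\bar{X}$ have identical transitions at every point off $\partial_1$, $g_{i,n}$ is automatically $X$-subharmonic there. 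On $\partial_1$ the only discrepancy between the two dynamics is that $X$ cancels the increment $-e_1+e_2$ while $\bar{X}$ applies it, so ${\mathbb E}_x[g_{i,n}(X_1)]-{\mathbb E}_x[g_{i,n}(\bar{X}_1)]=\mu_1\big(g_{i,n}(x)-g_{i,n}(x-e_1+e_2)\big)$; a one-line computation from $g_{i,n}(y)=\rho_i^{\,n-\sum_{j\le i}y(j)}$ shows this difference equals $\mu_1(1-\rho_1)g_{1,n}(x)\ge 0$ when $i=1$ and $0$ when $i\ge 2$. Combined with the $\bar{X}$-subharmonicity just noted, $g_{i,n}$ is $X$-subharmonic on $\partial_1$ as well. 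Finally, a pointwise maximum of $X$-subharmonic functions is $X$-subharmonic, since ${\mathbb E}_x[g_n(X_1)]\ge {\mathbb E}_x[g_{i,n}(X_1)]\ge g_{i,n}(x)$ for every $i$, whence ${\mathbb E}_x[g_n(X_1)]\ge\max_i g_{i,n}(x)=g_n(x)$.

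It then remains to run the stopping argument. The walk stays inside the finite set $A_n$ until time $\tau_n\wedge\tau_0$, which is therefore a.s. finite, and on $A_n$ one has $g_{i,n}(y)=\rho_i^{\,n-\sum_{j\le i}y(j)}\le 1$ because $\sum_{j\le i}y(j)\le n$ and $\rho_i<1$, so $g_n(X_{k\wedge\tau_n\wedge\tau_0})$ is a submartingale bounded by $1$. Optional sampling gives $g_n(x)\le {\mathbb E}_x[g_n(X_{\tau_n\wedge\tau_0})]$. On $\{\tau_n<\tau_0\}$ the walk is stopped at $X_{\tau_n}\in\partial A_n$, where $g_n\le 1$; on $\{\tau_0<\tau_n\}$ it is stopped at $X_{\tau_0}=0$, where $g_n(0)=\max_i\rho_i^{\,n}=\rho^n$. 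Hence ${\mathbb E}_x[g_n(X_{\tau_n\wedge\tau_0})]\le {\mathbb P}_x(\tau_n<\tau_0)+\rho^n{\mathbb P}_x(\tau_0<\tau_n)\le {\mathbb P}_x(\tau_n<\tau_0)+\rho^n$, which is the claim. The only genuinely delicate point is the verification of $X$-subharmonicity of the $g_{i,n}$ on the constraining boundary $\partial_1$: Proposition \ref{p:hkr} is stated for the process $Y$, whose first coordinate is unconstrained, and one has to confirm that reinstating the constraint for $X$ can only raise ${\mathbb E}_x[g_{i,n}(\cdot)]$. Everything else is routine, and in particular no time truncation is needed here because $g_n$ is bounded on $A_n$ and $\tau_n\wedge\tau_0$ is a.s. finite.
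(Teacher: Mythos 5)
Your proof is correct and follows essentially the same submartingale/optional-sampling route as the paper: each $g_{i,n}$ is shown to be $X$-subharmonic from Proposition~\ref{p:hkr}, the stopped process is a bounded submartingale, the boundary values $\le 1$ on $\partial A_n$ and $\rho^n$ at $0$ are read off, and passing to $\max_i$ finishes the bound (you maximize before applying optional sampling, the paper after — this is immaterial). The one place you are more careful than the paper is the verification of $X$-subharmonicity on $\partial_1$: Proposition~\ref{p:hkr} is stated for the $\partial_1$-\emph{unconstrained} process $Y$, and the paper simply invokes ``the calculation in the proof of Proposition~\ref{p:hkr}'' while writing down the $X$-generator formula with the indicator ${\bm 1}_{\partial_i^c}$ for all $i$; your explicit check that re-imposing the constraint on $\partial_1$ changes ${\mathbb E}_x[g_{i,n}(\cdot)]$ by $\mu_1\bigl(g_{i,n}(x)-g_{i,n}(x-e_1+e_2)\bigr)$, which equals $\mu_1(1-\rho_1)g_{1,n}(x)\ge 0$ for $i=1$ and $0$ for $i\ge 2$, is precisely what makes that cited formula valid for $X$ and closes a gap the paper leaves implicit.
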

\begin{proof} 
That $g_{i,n}(x) = h_{i,\rho_i}(T_n(x))$
and the calculation in the proof of Proposition \ref{p:hkr} give
\[
{\mathbb E}_x[g_{i,n}(X_1)] -g_{i,n}(x) =
g_{i,n}(x) \left( \lambda (\frac{1}{\rho_i}-1) + \mu_k (\rho_i-1) {\bm 1}_{\partial_k^c}(x)\right).
\]
The right side of this equality is $0$ for $x \in \partial_k^c$ and positive for $x \in \partial_k.$
It follows that $g_{i,n}$ is $X$-subharmonic on ${\mathbb Z}_+^d.$
Therefore, $k\mapsto g_{i,n}(X_k)$ is a submartingale. The stability of $X$ implies that $\tau_0 < \infty$ almost surely.
This, that 
$k\mapsto g_{i,n}(X_k)$ is a submartingale and the optional sampling theorem give
\begin{align*}
g_{i,n}(x) &\le {\mathbb E}[ h_{i,n}(X_{\tau_n})1_{\{\tau_n < \tau_0\}}] + {\mathbb E}[g_{i,n}(0)1_{\{\tau_n > \tau_0\}}]
\intertext{ $g_{i,n} \le 1$ on $\partial A_n$ and $g_{i,n}(0) = \rho_i^n$ imply}
&\le {\mathbb P}_x(\tau_n < \tau_0) + \rho_i^n.
\end{align*}
Applying $\max_{i\in \{1,2,3,...,d\}}$ to both sides gives \eqref{e:lowerboundP}.
\end{proof}
Define the order relation $\preccurlyeq$ on the nodes $\{1,2,3,...,d\}$ as follows: 
$i \preccurlyeq j$ if $\rho_i \le \rho_j$ and $i \le j.$ It follows from its definition
that $\preccurlyeq$ is a partial order relation (it is reflexive, antisymmetric, transitive).
Define
\[
{\mathscr M} \doteq \{ i \in \{1,2,3,...,d\}: 
\nexists j \in \{1,2,3,..,d\} \text{ such that } \rho_j \ge \rho_i \text{ and } j > i \}.
\]
The set ${\mathscr M}$ consists exactly of the maximal elements of the relation
$\preccurlyeq$.
$d$ is the maximum of $\{1,2,3,...,d\}$, therefore there can be 
no $j \in \{1,2,3,...,d\}$ satisfying $j > d$, this implies that 
$d \in {\mathscr M}$
always holds, in particular, ${\mathscr M}$ is never empty.
A similar argument implies
$\rho_i \neq \rho_j$ for $i,j \in {\mathscr M}$. Let us label members of ${\mathscr M}$ by
$i_1$, $i_2$,..., $i_{|A|}$ so that 
\begin{equation}\label{e:rhos}
\rho_{i_1} > 
\rho_{i_2} > 
\rho_{i_3} > \cdots >\rho_{i_{|{\mathscr M}|}};
\end{equation}
$i_1  < i_3 <\cdots < i_{|{\mathscr M}|}$ and $i_{|{\mathscr M}|} = d$  once again follow 
from the definitions just given.

The point $x \in A_n$ must satisfy $g_n(x) < \rho^n$ for 
the bound \eqref{e:lowerboundP} to be nontrivial. 
The next proposition identifies the set of such $x$.
\begin{align}
R_\rho &\doteq  \label{d:Rrho}
\bigcap_{i \in {\mathscr M} } \left\{x \in {\mathbb R}^d_+: \sum_{j=1}^{i} x(j) \le  \left(1 - \frac{\log \rho}{\log \rho_i}\right) \right\}\\
R_{\rho,n} &\doteq \{x \in {\mathbb Z}_+^d: x/n \in R_\rho \} \notag \\
\bar{R}_{\rho,n} &\doteq \notag
\bigcup_{i \in {\mathscr M} } \left\{x \in {\mathbb Z}^d_+: \sum_{j=1}^{i} x(j) \ge 1+ n\left(1 - \frac{\log \rho}{\log \rho_i}\right) \right\}.
\end{align}
Two comments: 1) $x \in \bar{R}_{\rho,n}$ if 
$\sum_{j=1}^{\min {\mathscr M} } x(j) \ge 1$;
in particular $\bar{R}_{\rho,n} = 
\{x \in{\mathbb Z}_+^d: \sum_{j=1}^{d} x(j) \ge 1$ if
${\mathscr M} = \{d\}$ (i.e., if $\rho_d \ge \rho_i$ for all $i$);
2) $\bar{R}_{\rho,n}$ is almost the complement of $R_{\rho,n}$.

\begin{proposition}\label{p:aboutgn}
The following hold:
\begin{equation}\label{e:gnstatements}
g_n(x) = \max_{i \in {\mathscr M}} \rho_i^{n -\sum_{j=1}^i x_j}, \min_{x \in A_n} g_n(x) = \rho^n,
\end{equation}
\begin{equation}\label{e:minimizersofgn}
\{g_n(x) = \rho^n\} =  R_{\rho,n}.
\end{equation}
and
\begin{equation}\label{e:boundonratio}
\frac{g_n(x)}{g_n(x) - \rho^n} \le \frac{1}{1-\rho},
\end{equation}
for $x \in \bar{R}_{\rho,n}.$
\end{proposition}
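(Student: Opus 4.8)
The three assertions \eqref{e:gnstatements}, \eqref{e:minimizersofgn} and \eqref{e:boundonratio} of Proposition \ref{p:aboutgn} will all follow from elementary comparisons among the terms $g_{i,n}(x)=\rho_i^{\,n-\sum_{j=1}^i x(j)}$ that make up $g_n$, organized along the partial order $\preccurlyeq$. The single computational fact used throughout is that, since $0<\rho_i<1$, the map $t\mapsto\rho_i^{\,t}$ is strictly decreasing, so enlarging the partial sum $\sum_{j=1}^i x(j)$ enlarges $g_{i,n}(x)$. \emph{Step 1 (reduction to ${\mathscr M}$ and the minimum).} First I would show that if $i\preccurlyeq i'$ and $x\in A_n$ then $g_{i,n}(x)\le g_{i',n}(x)$: with $a=n-\sum_{j=1}^i x(j)$ and $b=n-\sum_{j=1}^{i'}x(j)$ one has $0\le b\le a$ because $x\ge 0$ and $\sum_{j=1}^{i'}x(j)\le n$, and then $\rho_i^{\,a}\le\rho_{i'}^{\,a}\le\rho_{i'}^{\,b}$ from $\rho_i\le\rho_{i'}<1$. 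Since $\preccurlyeq$ is a partial order on a finite set, every index lies below a $\preccurlyeq$-maximal one, i.e.\ below a member of ${\mathscr M}$, so $g_n(x)=\max_{i}g_{i,n}(x)=\max_{i\in{\mathscr M}}g_{i,n}(x)$ for $x\in A_n$; this is the first identity in \eqref{e:gnstatements}. For the second, $g_n(x)\ge g_{i_1,n}(x)$, and since $\rho_{i_1}=\rho$ this equals $\rho^{\,n-\sum_{j=1}^{i_1}x(j)}\ge\rho^n$ for every $x\in{\mathbb Z}_+^d$ (the exponent is $\le n$ and $\rho<1$), while $g_n(0)=\rho^n$ with $0\in A_n$, giving $\min_{x\in A_n}g_n(x)=\rho^n$.

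\emph{Step 2 (the minimizing set).} Combining $g_n\ge\rho^n$ with Step 1, for $x\in A_n$ the equality $g_n(x)=\rho^n$ is equivalent to $g_{i,n}(x)\le\rho^n$ for every $i\in{\mathscr M}$; one also checks directly that $g_n(x)=\rho^n$ forces $x\in A_n$, so nothing is lost by restricting to $A_n$. Taking logarithms in $g_{i,n}(x)\le\rho^n$ and dividing by $\log\rho_i<0$, which reverses the inequality, rewrites it as $\sum_{j=1}^i x(j)\le n\bigl(1-\tfrac{\log\rho}{\log\rho_i}\bigr)$, and requiring this for all $i\in{\mathscr M}$ is exactly $x\in R_{\rho,n}$ by \eqref{d:Rrho}; this proves \eqref{e:minimizersofgn}.

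\emph{Step 3 (the ratio bound) and the main difficulty.} Clearing denominators shows that $\tfrac{g_n(x)}{g_n(x)-\rho^n}\le\tfrac{1}{1-\rho}$ is equivalent to $g_n(x)\ge\rho^{\,n-1}$, so it suffices to establish the latter for $x\in\bar R_{\rho,n}$ (which simultaneously shows $g_n(x)>\rho^n$, so the ratio is well defined). The point requiring care is that $\bar R_{\rho,n}$ is \emph{not} contained in $A_n$, so Step 1 is unavailable and one must argue through a single dominating term: from the definition of $\bar R_{\rho,n}$ pick $i\in{\mathscr M}$ with $\sum_{j=1}^i x(j)\ge 1+n\bigl(1-\tfrac{\log\rho}{\log\rho_i}\bigr)$, so $n-\sum_{j=1}^i x(j)\le n\tfrac{\log\rho}{\log\rho_i}-1$; then, using that $t\mapsto\rho_i^{\,t}$ is decreasing, $g_n(x)\ge g_{i,n}(x)\ge\rho_i^{\,n\log\rho/\log\rho_i-1}=\rho^n\rho_i^{-1}\ge\rho^n\rho^{-1}=\rho^{\,n-1}$, the last inequality because $\rho_i\le\rho$. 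Beyond this there is no real obstacle: the whole argument is bookkeeping the directions of the monotonicities — the sign of $\log\rho_i$ under division, the fact that $\rho_i^{\,t}$ decreases in $t$, and the role of $x\in A_n$ in keeping the exponents nonnegative — and the only genuine subtlety is the $\bar R_{\rho,n}\not\subseteq A_n$ issue just noted, which is why \eqref{e:boundonratio} must be handled via the single term $g_{i,n}$ rather than the reduced maximum.
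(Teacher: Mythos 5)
Your proof is correct and follows essentially the same route as the paper's: reduce the max to $i\in{\mathscr M}$ via the partial order, read off the minimum at $x=0$, rewrite $g_{i,n}(x)\le\rho^n$ by taking logarithms to get $R_{\rho,n}$, and bound the ratio through a single dominating term $g_{i,n}$ for $x\in\bar R_{\rho,n}$. The one cosmetic difference is in the last step, where you clear denominators and show $g_n(x)\ge\rho^{n-1}$, whereas the paper chains the decreasing map $t\mapsto t/(t-\rho^n)$ to get $\tfrac{g_n}{g_n-\rho^n}\le\tfrac{1}{1-\rho_i}\le\tfrac{1}{1-\rho}$; you are also somewhat more careful than the paper in flagging that the reduction $g_n=\max_{i\in{\mathscr M}}g_{i,n}$ requires $x\in A_n$ and in noting that $g_n(x)=\rho^n$ forces $x\in A_n$, both of which are correct and useful observations.
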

\begin{proof}
For $i \neq j$,
the definitions of $\preccurlyeq$, $g_{i,n}$ and $g_{j,n}$ imply
\[
g_{i,n}(x) \le g_{j,n}(x),
\]
if $i \preccurlyeq j$. Therefore, one can replace the index set $\{1,2,3,...,d\}$ in \eqref{e:defgn} 
with ${\mathscr M}$; this implies the first statement in \eqref{e:gnstatements}.

Reversing the order of $\min$ and $\max$ gives
\[
\min_{x \in A_n} g_n(x) = \min_{x \in A_n} \max_{i \in \{1,2,3,...,d\}}g_{i,n}(x)
\ge \max_{i \in \{1,2,3,...,d\}} \min_{x \in A_n} g_{i,n}(x).
\]
By the definition of $g_{i,n}$ we have
\[
\min_{x \in A_n} g_{i,n}(x) = \rho_i^n.
\]
The last two displays imply
\[
\min_{x \in A_n}g_n(x) \ge \rho^n.
\]
On the other hand, $g_{n}(0) = \max_{i \in \{1,2,3,...d\}}  \rho_i^n = \rho^n$; this
and the last display imply the second statement in \eqref{e:gnstatements}.

Once again, the definition of $g_{i,n}$ implies
\[
\{x: g_{i,n}(x) \le \rho^n \} =  \left\{x \in {\mathbb Z}^d_+: \sum_{j=1}^{i} x(j) \le n \left(1 - \frac{\log \rho}{\log \rho_i}\right) \right\}.
\]
This and $g_n(x) = \max_{i \in {\mathscr M}} \rho_i^{n -\sum_{j=1}^i x_j}$ implies 
\eqref{e:minimizersofgn}.

Finally, if $x\in {\mathbb Z}_+^d$ satisfies $\sum_{j=1}^{i} x(j) \ge 1+ n\left(1 - \frac{\log \rho}{\log \rho_i}\right)$ we have
$g_{i,n}(x) \ge \rho^n/ \rho_i.$ Then
\[
\frac{g_n(x)}{g_n(x) - \rho^n}
\le 
\frac{g_{i,n}(x)}{g_{i,n}(x) - \rho^n}
\le
\frac{\rho^n/\rho_i}{\rho^n/\rho_i - \rho^n} \le \frac{1}{1-\rho_i}
\le \frac{1}{1-\rho},
\]
which implies \eqref{e:boundonratio}.
\end{proof}
Recall the convention \eqref{e:rhos}; i.e., $\rho_{i_1} = \rho$ and $i_1 = \min{\mathcal M}_{\preccurlyeq}$.
Therefore,  
$R_{\rho} \subset \{x \in {\mathbb R}_+^d: \sum_{j=1}^{i_1} x(j) = 0 \}$;
in particular $R_\rho$ has strictly lower dimension than $d$. If 
$|{\mathscr M}| =1$, i.e., if $\rho_d > \rho_i$ for all $i < d$ we
have $R_{\rho} =\{0\}.$

Define
\[
g: {\mathbb R}_+^d \mapsto {\mathbb R}, g(x) = \max_{i \in \{1,2,..,d\}}
(1-\sum_{j=1}^i x(j))\log_{\rho}\rho_i
\]
The following lemma follows from the definition of $g$ and the arguments
of the previous proposition:
\begin{lemma}\label{l:gnasg}
$g_n(x) = \rho^{ng(x/n)}$, 
$\max_{x \in A} g(x) =1$, $\{x: g(x) = 1 \} = R_\rho.$
\end{lemma}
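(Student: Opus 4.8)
Proof of Lemma~\ref{l:gnasg} (plan). The three assertions all follow from the single change of variables $\rho_i=\rho^{\log_\rho\rho_i}$ together with the facts about $g_n$ already established in Proposition~\ref{p:aboutgn}. The plan is: (i) prove the identity $g_n(x)=\rho^{ng(x/n)}$; (ii) record the ``$n=1$, continuum'' version of Proposition~\ref{p:aboutgn}; (iii) read off the remaining two assertions. For (i), since $0<\rho<1$ we may write $\rho_i=\rho^{\log_\rho\rho_i}$, whence
\[
g_{i,n}(x)=\rho_i^{\,n-\sum_{j=1}^i x(j)}
=\rho^{\,\left(n-\sum_{j=1}^i x(j)\right)\log_\rho\rho_i}
=\rho^{\,n\left(1-\frac1n\sum_{j=1}^i x(j)\right)\log_\rho\rho_i}.
\]
The map $t\mapsto\rho^t$ is a continuous, strictly monotone bijection of $\mathbb{R}$ onto $(0,\infty)$, so it intertwines the extremum over $i$ in the definition of $g$ with the one in the definition $g_n=\max_i g_{i,n}$; comparing exponents gives $g_n(x)=\rho^{ng(x/n)}$ for every $x\in\mathbb{Z}_+^d$. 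Running the same rewriting with a real argument $y\in\mathbb{R}_+^d$ (formally, $n=1$ with $x$ replaced by $y$) gives the continuum identity
\[
G(y):=\max_{i\in\{1,\dots,d\}}\rho_i^{\,1-\sum_{j=1}^i y(j)}=\rho^{g(y)},\qquad y\in\mathbb{R}_+^d,
\]
which is what connects $g$ to the quantities controlled in Proposition~\ref{p:aboutgn}.

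For (ii) and (iii), the proof of Proposition~\ref{p:aboutgn} goes through word for word with $n=1$ and $\mathbb{Z}_+^d$ replaced by $\mathbb{R}_+^d$. The inequality $\min_{y\in A}\max_i\rho_i^{\,1-\sum_{j=1}^i y(j)}\ge\max_i\min_{y\in A}\rho_i^{\,1-\sum_{j=1}^i y(j)}=\max_i\rho_i=\rho$ (using $0\le\sum_{j=1}^i y(j)\le1$ on $A$), together with $G(0)=\max_i\rho_i=\rho$, gives $\min_{y\in A}G(y)=\rho$. The elementary equivalence
\[
\rho_i^{\,1-\sum_{j=1}^i y(j)}\le\rho
\iff\bigl(1-\textstyle\sum_{j=1}^i y(j)\bigr)\log\rho_i\le\log\rho
\iff\textstyle\sum_{j=1}^i y(j)\le 1-\frac{\log\rho}{\log\rho_i}
\]
(the last step by division by $\log\rho_i<0$) identifies $\{y\in\mathbb{R}_+^d:G(y)\le\rho\}$ with $\bigcap_{i}\{y:\sum_{j=1}^i y(j)\le 1-\frac{\log\rho}{\log\rho_i}\}$, which is $R_\rho$ once the index set is cut down to $\mathscr{M}$ by the $\preccurlyeq$-monotonicity argument of Proposition~\ref{p:aboutgn}; moreover, choosing any $i_0$ with $\rho_{i_0}=\rho$ and using $\sum_{j=1}^{i_0}y(j)\ge0$ shows $G(y)\ge\rho_{i_0}^{\,1-\sum_{j=1}^{i_0}y(j)}\ge\rho$ on all of $\mathbb{R}_+^d$, so $\{y:G(y)\le\rho\}=\{y:G(y)=\rho\}=R_\rho$. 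Finally, applying the strictly decreasing bijection $\log_\rho$ to $G=\rho^{g}$ turns $\min_{y\in A}G(y)=\rho$ into $\max_{y\in A}g(y)=\log_\rho\rho=1$, and, by injectivity of $t\mapsto\rho^t$, turns $\{y:G(y)=\rho\}=R_\rho$ into $\{y\in\mathbb{R}_+^d:g(y)=1\}=R_\rho$.

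There is no real obstacle here; the computation is exactly that of Proposition~\ref{p:aboutgn} at scale $n=1$. The one point that needs care is bookkeeping: the strictly decreasing bijections $t\mapsto\rho^t$ and $\log_\rho$ interchange $\min$ and $\max$, so one must track which extremum appears at each step when passing between $g$, $g_n$, $G$ and powers of $\rho$.
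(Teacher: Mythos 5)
Your overall plan (rewrite $\rho_i=\rho^{\log_\rho\rho_i}$, use the ``$n=1$'' version of Proposition~\ref{p:aboutgn}, and pass through the decreasing bijections $t\mapsto\rho^t$ and $\log_\rho$) is exactly what the paper has in mind when it remarks that the lemma ``follows from the definition of $g$ and the arguments of the previous proposition,'' and every individual computation you write down is correct. However, the step you handwave as ``intertwines the extremum over $i$\dots comparing exponents gives $g_n(x)=\rho^{ng(x/n)}$'' is precisely where the care you warn about in your final sentence is actually needed, and as written it hides a real inconsistency. Since $\rho\in(0,1)$, the map $t\mapsto\rho^t$ is order-reversing, so
\[
g_n(x)=\max_i \rho^{\,(n-\sum_{j\le i}x(j))\log_\rho\rho_i}
=\rho^{\,\min_i (n-\sum_{j\le i}x(j))\log_\rho\rho_i}.
\]
With the paper's displayed definition $g(x)=\max_{i}(1-\sum_{j\le i}x(j))\log_\rho\rho_i$ one instead gets $\rho^{ng(x/n)}=\rho^{\max_i(\cdot)}=\min_i g_{i,n}(x)$, which is \emph{not} $g_n(x)$, and the other two claims fail too (with the $\max_i$ definition $g(0)=\max_i\log_\rho\rho_i>1$ whenever the $\rho_i$ are not all equal). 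In other words, the identity $g_n=\rho^{ng(\cdot/n)}$ forces the $\max_i$ in the displayed definition of $g$ to be a $\min_i$; every subsequent line of your argument (in particular $G=\rho^g$ and the passage from $\min_A G=\rho$ to $\max_A g=1$) tacitly uses that corrected $\min_i$ definition. You should state this explicitly: first derive $g_n(x)=\rho^{\min_i(n-\sum_{j\le i}x(j))\log_\rho\rho_i}$ directly, observe that this requires reading the definition of $g$ with $\min_i$ (flagging the typo), and only then carry out the $n=1$ rescaling and the $\log_\rho$ transfer, which you do correctly. Everything else in your write-up --- the min--max interchange giving $\min_A G\ge\rho$, the algebraic equivalence $\rho_i^{1-\sum y_j}\le\rho\iff\sum y_j\le 1-\log\rho/\log\rho_i$ (with the sign flip from dividing by $\log\rho_i<0$ handled correctly), the reduction of the index set from $\{1,\dots,d\}$ to $\mathscr M$ via the $\preccurlyeq$-monotonicity of Proposition~\ref{p:aboutgn}, and the observation that $G\ge\rho$ everywhere so that $\{G\le\rho\}=\{G=\rho\}$ --- is sound.
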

The upperbound on the approximation error follows from the
bounds above:
\begin{theorem}\label{t:convergence}
 For  $\epsilon > 0$
there exists $n_0 > 0$ such that
\begin{equation}\label{e:mainrelativeerrorbound}
\frac{ |{\mathbb P}_{x}(\tau_n < \tau_0) - {\mathbb P}_{T_n(x)}(\tau < \infty)|}{ {\mathbb P}_{x}(\tau_n < \tau_0)} \le \rho^{n(1 - g(x/n)-\epsilon)}
\end{equation}
for all $n > n_0$ and for any $x \in \bar{R}_{\rho,n}$.
In particular, for $x_n/n \rightarrow x  \in A - R_\rho$ the relative error
decays exponentially with rate $-\log(\rho)(1-g(x))>0$, i.e.,
\begin{equation}\label{e:decayrate}
\liminf_n -\frac{1}{n} \log \left(
\frac{ |{\mathbb P}_{x_n}(\tau_n < \tau_0) - {\mathbb P}_{T_n(x_n)}(\tau < \infty)|}{ {\mathbb P}_{x_n}(\tau_n < \tau_0)} \right)
 \ge -\log(\rho)(1-g(x)).
\end{equation}
\end{theorem}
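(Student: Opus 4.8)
The plan is to assemble the four estimates that have now been established: the containment of the symmetric difference of the two events (Lemma~\ref{l:boundondifference}), the two upper bounds on the pieces of that symmetric difference (Propositions~\ref{p:upperboundonlong} and~\ref{p:boundonbtau0btaun}), and the subharmonic minorant of the denominator (Proposition~\ref{p:lowerboundP} combined with the ratio bound \eqref{e:boundonratio} and the identity $g_n(x)=\rho^{ng(x/n)}$ of Lemma~\ref{l:gnasg}). Throughout take $x\in A_n\cap\bar R_{\rho,n}$, so that $1\le{\bm S}(x)\le n$; the boundary case ${\bm S}(x)=n$ is trivial because then $\tau_n=0$ and $T_n(x)\in\partial B$, so both probabilities equal $1$ and the relative error is $0$. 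Hence assume $0<{\bm S}(x)<n$, which is the standing hypothesis of Lemma~\ref{l:boundondifference}.

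First I would bound the numerator. Since ${\mathbb P}_{T_n(x)}(\tau<\infty)={\mathbb P}_x(\bar\tau_n<\infty)$, the displayed inequality following Lemma~\ref{l:boundondifference} together with Propositions~\ref{p:upperboundonlong} and~\ref{p:boundonbtau0btaun} gives, for every $\epsilon>0$ and all $n>n_0(\epsilon)$,
\[
|{\mathbb P}_x(\tau_n<\tau_0)-{\mathbb P}_{T_n(x)}(\tau<\infty)|\le{\mathbb P}_x(\sigma_{d-1,d}<\tau_n<\tau_0)+{\mathbb P}_x(\bar\tau_0<\bar\tau_n<\infty)\le\rho^{n(1-\epsilon)}+C\rho^n,
\]
where $C\doteq\frac{1}{\gamma_d}\sum_{j=1}^d\gamma_j$ is the finite, $n$-independent constant of Proposition~\ref{p:boundonbtau0btaun}. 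For the denominator, Proposition~\ref{p:lowerboundP} gives ${\mathbb P}_x(\tau_n<\tau_0)\ge g_n(x)-\rho^n$, and, since $x\in\bar R_{\rho,n}$, \eqref{e:boundonratio} upgrades this to ${\mathbb P}_x(\tau_n<\tau_0)\ge(1-\rho)g_n(x)=(1-\rho)\rho^{ng(x/n)}$ by Lemma~\ref{l:gnasg}. Dividing,
\[
\frac{|{\mathbb P}_x(\tau_n<\tau_0)-{\mathbb P}_{T_n(x)}(\tau<\infty)|}{{\mathbb P}_x(\tau_n<\tau_0)}\le\frac{\rho^{n(1-\epsilon)}+C\rho^n}{(1-\rho)\rho^{ng(x/n)}}=\frac{1+C\rho^{n\epsilon}}{1-\rho}\,\rho^{n(1-g(x/n)-\epsilon)}.
\]
Because the prefactor $\frac{1+C\rho^{n\epsilon}}{1-\rho}$ exceeds $1$, this is not yet \eqref{e:mainrelativeerrorbound}; I would therefore re-run the whole argument with $\epsilon/2$ in place of $\epsilon$, obtaining the bound $\frac{1+C\rho^{n\epsilon/2}}{1-\rho}\rho^{n(1-g(x/n)-\epsilon/2)}$, and then write $\rho^{n(1-g(x/n)-\epsilon/2)}=\rho^{n(1-g(x/n)-\epsilon)}\rho^{n\epsilon/2}$. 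Since $\frac{\rho^{n\epsilon/2}+C\rho^{n\epsilon}}{1-\rho}\to0$ as $n\to\infty$ and this quantity does not depend on $x$, there is $n_0(\epsilon)$ beyond which it is $\le1$, which yields \eqref{e:mainrelativeerrorbound} for all $x\in A_n\cap\bar R_{\rho,n}$, regardless of the sign of $1-g(x/n)-\epsilon$.

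For the asymptotic statement \eqref{e:decayrate}, fix $x\in A-R_\rho$. By Lemma~\ref{l:gnasg}, $g$ is continuous, $g\le1$ on $A$, and $\{g=1\}=R_\rho$, so $g(x)<1$; more precisely, $x\notin R_\rho=\bigcap_{i\in{\mathscr M}}\{\sum_{j=1}^ix(j)\le1-\log\rho/\log\rho_i\}$ means that some $i\in{\mathscr M}$ satisfies $\sum_{j=1}^ix(j)>1-\log\rho/\log\rho_i$, hence for $x_n/n\to x$ we have $\sum_{j=1}^ix_n(j)\ge1+n(1-\log\rho/\log\rho_i)$ for all large $n$, i.e.\ $x_n\in\bar R_{\rho,n}$. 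Applying \eqref{e:mainrelativeerrorbound} to $x_n$, taking $-\frac1n\log(\cdot)$, and using $g(x_n/n)\to g(x)$ gives $\liminf_n(\cdots)\ge-\log(\rho)(1-g(x)-\epsilon)$ for every $\epsilon>0$, which is \eqref{e:decayrate}. Since all the substantive work is done in the preceding subsections --- the containment $\{\tau_n<\tau_0\}\,\Delta\,\{\bar\tau_n<\infty\}\subset\{\sigma_{d-1,d}<\tau_n<\tau_0\}\cup\{\bar\tau_0<\bar\tau_n<\infty\}$, the strictly $Y$-superharmonic functions $h_{2,k,r}$, the time-truncated supermartingale $S_k$ of \eqref{e:defsupS}, and the subharmonic minorant $g_n$ --- there is no essential obstacle left in the theorem itself; the only point requiring care is the constant-chasing, namely keeping the domain hypotheses $x\in A_n\cap\bar R_{\rho,n}$ and $0<{\bm S}(x)<n$ simultaneously in force, noting that $C$ from Proposition~\ref{p:boundonbtau0btaun} is independent of $n$, and performing the $\epsilon\mapsto\epsilon/2$ replacement so that the factor $1/(1-\rho)$ and the leftover $C\rho^n$ term are absorbed into $\rho^{n(1-g(x/n)-\epsilon)}$.
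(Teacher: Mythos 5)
Your proposal is correct and follows essentially the same route as the paper's proof: bound the numerator via Lemma~\ref{l:boundondifference} together with Propositions~\ref{p:upperboundonlong} and~\ref{p:boundonbtau0btaun}, bound the denominator below via Propositions~\ref{p:lowerboundP} and~\ref{p:aboutgn} and Lemma~\ref{l:gnasg}, divide, and absorb the constant prefactor into the $\rho^{n\epsilon}$ slack by enlarging $n_0$ (the paper simply starts with $\epsilon/2$ rather than re-running the argument as you suggest, but this is the same idea). Your extra care --- the boundary case ${\bm S}(x)=n$, verifying $x_n\in\bar R_{\rho,n}$ for $n$ large when $x\notin R_\rho$, and the sign of the exponent in $1/g_n(x)=\rho^{-ng(x/n)}$ --- fills in small points the paper leaves implicit (and corrects a sign typo in the paper's display, which writes $\rho^{ng(x/n)}$ where $\rho^{-ng(x/n)}$ is meant).
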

\begin{proof}
By Lemma \ref{l:boundondifference} 
\begin{equation}\label{e:boundonnum}
|{\mathbb P}_{x_n}(\tau_n < \tau_0) - {\mathbb P}_{T_n(x_n)}(\tau < \infty)|
\le 
{\mathbb P}_x(\bar\tau_0 < \bar\tau_n < \infty)  + 
{\mathbb P}_{x}(\sigma_{d-1,d} < \tau_n < \tau_0)
\end{equation}
By Proposition \ref{p:upperboundonlong} we can choose $n_0$ large enough
so that
\[
{\mathbb P}_{x}(\sigma_{d-1,d} < \tau_n < \tau_0) < 
\rho^{n(1-\epsilon/2)}
\]
for $n > n_0.$ This, \eqref{e:boundonnum}
 and Proposition \ref{p:boundonbtau0btaun} give
\[
|{\mathbb P}_{x_n}(\tau_n < \tau_0) - {\mathbb P}_{T_n(x_n)}(\tau < \infty)|
\le 
\rho^{n(1-\epsilon/2)}
+
\rho^n \frac{1}{\gamma_d} \sum_{j=1}^d \gamma_j.
\]
for $n > n_0.$  This and the lowerbound on ${\mathbb P}_x(\tau_n < \tau)$
given in Proposition \ref{p:lowerboundP} imply
\begin{align*}
\frac{ |{\mathbb P}_{x}(\tau_n < \tau_0) - {\mathbb P}_{T_n(x)}(\tau < \infty)|}{ {\mathbb P}_{x}(\tau_n < \tau_0)} 
&\le\frac{1}{ g_n(x)-\rho^n}
\left( \rho^{n(1-\epsilon/2)} +
\rho^n \frac{1}{\gamma_d} \sum_{j=1}^d \gamma_j\right)
\intertext{and by Proposition \ref{p:aboutgn} and Lemma \ref{l:gnasg}}
&\le
\frac{1}{1-\rho}\rho^{n g(x/n)}
\left( \rho^{n(1-\epsilon/2)} +
\rho^n \frac{1}{\gamma_d} \sum_{j=1}^d \gamma_j\right)
\end{align*}
Finally, increase $n_0$ if necessary so that
$ \frac{1}{1-\rho} \left( 1 + \frac{1}{\gamma_d}\sum_{j=1}^d \gamma_j\right)< 
\rho^{-n\epsilon/2}$
for $n > n_0$; then the last display and this choice of $n_0$ imply
\eqref{e:mainrelativeerrorbound}; \eqref{e:decayrate} follows
from \eqref{e:mainrelativeerrorbound}, the continuity of $g$ and
from the fact that $\epsilon > 0$ can be chosen arbitrarily small.
\end{proof}

\section{$Y$-harmonic functions from harmonic systems}
\label{s:dg2}
This section provides a framework for
the construction of $Y$-harmonic functions. This can be done without
additional effort
for constrained random walks arising from any
Jackson network and we will do so. The main element of the 
framework
is the reduction of
the construction of $Y$-harmonic functions
to the solution of certain equations represented
by graphs with labeled edges.
In the next section we will provide a solution to these
equations for the case when the constrained random walk represents
a tandem network.

In this section we allow ${\mathcal V}$ to be the set of possible
increments of any constrained random walk arising from a Jackson
network, i.e.:
\[
{\mathcal V} = \{- e_i + e_j, i,j \in \{0,1,2,...,d\}, i \neq j\},
\]
where $e_0 = 0 \in {\mathbb Z}^d$;
the unconstrained increments of $X$ takes values in ${\mathcal V}$
with probabilities ${\mathbb P}(I_k = -e_i +e_j) = p(i,j)$
where $p \in {\mathbb R}_+^{(d+1) \times (d+1)}$, $p(i,i) =0$,
$i \in \{0,1,2,3,...,d\}$ and
$\sum_{i,j=0}^d p(i,j) = 1.$
With this update to the set of possible increments the definition
of $X$ remains unchanged.
The increment $-e_i +e_j$ represents a customer leaving node $i$
and joining node $j$ where node $0$ represents outside of the system.
For a general Jackson network the total service rates are defined as
\[
\mu_i = \sum_{j=0}^d p(i,j), i \in \{1,2,3,...,d\}.
\]
The $Y$ process is defined as in \eqref{e:defY}
on $\partial_1$ with possible increments
\begin{align*}
{\mathcal V}_Y & = \{ {\mathcal I}_1 v, v \in {\mathcal V}\}\\
 & = \{ v_{1,j} \doteq e_1 +e_j, v_{i,1} \doteq -e_i -e_1, 
v_{i,j} \doteq -e_i + e_j, i,j \in \{0,1,2,3...,d\},i \neq j\}
\end{align*}
\begin{equation}\label{d:ygeneral}
Y_{k+1} = Y_k + \pi_1(Y_k, J_k).
\end{equation}

For $\alpha \in {\mathbb C}^{d-1}$ we will index the components
of the vector $\alpha$ with the set 
$\{2,3,4,...,d\}$, i.e.,
$\alpha = (\alpha(2),\alpha(3),...,\alpha(d))$ (so, more precisely,
$\alpha \in {\mathbb C}^{\{2,3,4,...d\}}$).
The class of $Y$-harmonic functions we seek are to be linear
combinations of functions of the form
\begin{align*}
y \in {\mathbb Z}^d &\mapsto [(\beta,\alpha),y],\\
[(\beta,\alpha),y] &\doteq \beta^{y(1)- \sum_{j=2}^d y(j) } 
\prod_{j=2}^d \alpha(j)^{y(j)}.
\end{align*}
$[(\beta,\alpha),y]$ is $\log$-linear in $y$, 
i.e., $y\mapsto \log([(\beta,\alpha),y])$ is 
linear in $y$.

For $a \subset \{2,3,...,d\}$ and $a^c = \{0,1,2,3,...,d\} - a$,
define the characteristic polynomial
\begin{equation}\label{e:hamiltonianba}
{\mathbf p}_a(\beta,\alpha)
\doteq 
\left( \sum_{ i \in a^c, j=0  }^d p(i,j) 
[(\beta,\alpha) ,v_{i,j} ]
+ 
\sum_{i \in a} \mu_i\right)
\end{equation}
the characteristic equation
\begin{equation}\label{e:chareqgenerald}
{\mathbf p}_a(\beta,\alpha) = 1,
\end{equation}
and the characteristic surface
\[
{\mathcal H}_a \doteq \{ (\beta,\alpha) \in {\mathbb C}^d: 
{\mathbf p}_a(\beta,\alpha) = 0\}
\]
of the boundary $\partial_a$, $ a \subset \{2,3,4,...,d\}.$
We will write ${\mathbf p}$ instead of ${\mathbf p}_\emptyset$.
${\mathbf p}_a$ is not a polynomial but a rational function;
to make it a polynomial one must multiply it by 
$\beta \prod_{j=2}^d \alpha(j)$; nonetheless, to keep our 
language simple
we will refer to the rational
\eqref{e:hamiltonianba} as the ``characteristic polynomial.''

Conditioning $Y$ on its first step gives
\begin{lemma}\label{l:singleterm1}
Suppose $(\beta,\alpha) \in {\mathcal H}$.
Then $[ (\beta,\alpha),\cdot]$ is $Y$-harmonic on 
$\Omega_Y - \bigcup_{j=2}^d \partial_j$.
\end{lemma}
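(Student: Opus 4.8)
The statement to prove is Lemma \ref{l:singleterm1}: if $(\beta,\alpha)\in{\mathcal H}$, then the log-linear function $[(\beta,\alpha),\cdot]$ is $Y$-harmonic at every $y\in\Omega_Y$ that does not lie on any of the boundaries $\partial_2,\dots,\partial_d$. The plan is a direct computation using the one-step conditioning (the "conditioning $Y$ on its first step" mentioned just before the lemma): write out ${\mathbb E}_y[[(\beta,\alpha),Y_1]]$ explicitly in terms of the increments and their probabilities, factor out $[(\beta,\alpha),y]$, and recognize the remaining scalar as the characteristic polynomial ${\mathbf p}_\emptyset(\beta,\alpha)={\mathbf p}(\beta,\alpha)$, which equals $1$ on ${\mathcal H}$.

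First I would fix $y\in\Omega_Y-\bigcup_{j=2}^d\partial_j$, so that $y(j)>0$ for all $j\in\{2,3,\dots,d\}$; the first coordinate $y(1)$ is unconstrained for $Y$ anyway. The crucial consequence is that for \emph{every} increment $v_{i,j}\in{\mathcal V}_Y$, none of the constraining maps $\pi_1$ are active at $y$: each increment decreases at most one of the coordinates $y(2),\dots,y(d)$ by one (the coordinate $y(i)$ for $i\in\{2,\dots,d\}$), and since that coordinate is strictly positive the move is admissible; moves that only touch coordinate $1$ are never constrained for $Y$. Hence $Y_1=y+J_1$ with $J_1$ distributed over ${\mathcal V}_Y$ according to the $p(i,j)$'s (where $v_{i,j}={\mathcal I}_1(-e_i+e_j)$ and the "$i$ or $j$ equals $0$" cases are handled by $e_0=0$).

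Next I would use the defining multiplicative property of $[(\beta,\alpha),\cdot]$: since it is log-linear, $[(\beta,\alpha),y+v]=[(\beta,\alpha),y]\cdot[(\beta,\alpha),v]$ for any $v\in{\mathbb Z}^d$. Therefore
\[
{\mathbb E}_y\bigl[[(\beta,\alpha),Y_1]\bigr]
=[(\beta,\alpha),y]\sum_{i,j=0}^d p(i,j)\,[(\beta,\alpha),v_{i,j}]
=[(\beta,\alpha),y]\,{\mathbf p}(\beta,\alpha),
\]
where in the last equality I use the definition \eqref{e:hamiltonianba} of ${\mathbf p}_a$ with $a=\emptyset$, so that $a^c=\{0,1,2,\dots,d\}$ and the $\sum_{i\in a}\mu_i$ term is absent. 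Since $(\beta,\alpha)\in{\mathcal H}={\mathcal H}_\emptyset$ means ${\mathbf p}(\beta,\alpha)=1$, this gives ${\mathbb E}_y[[(\beta,\alpha),Y_1]]=[(\beta,\alpha),y]$, i.e. the $Y$-harmonicity identity at $y$. Since $y$ was an arbitrary point off $\bigcup_{j=2}^d\partial_j$, the lemma follows.

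The only real point requiring care — and the step I would flag as the place where one must be careful rather than "hard" — is the bookkeeping in identifying $\sum_{i,j}p(i,j)[(\beta,\alpha),v_{i,j}]$ with ${\mathbf p}(\beta,\alpha)$: one must track the role of the index $0$ (both as a source, giving the "arrival" increments $v_{0,j}$ with $p(0,j)$, and as a sink, giving the "departure" increments $v_{i,0}=-e_i-e_1$), and confirm that the convention $e_0=0$ makes $v_{0,j}=e_1+e_j$ and $v_{i,0}=-e_i-e_1$ come out with the signs built into \eqref{e:hamiltonianba}. I would also note explicitly that the value $[(\beta,\alpha),v_{i,j}]$ is exactly what appears inside the sum in \eqref{e:hamiltonianba}, so no regrouping beyond reindexing is needed. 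No assumption beyond $(\beta,\alpha)\in{\mathcal H}$ and $y\notin\bigcup_{j=2}^d\partial_j$ is used; in particular $\beta$ and the $\alpha(j)$ may be any complex numbers, and the argument is purely algebraic once the non-constraining observation above is in place.
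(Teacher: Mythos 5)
Your proposal is correct and follows exactly the same route as the paper's own (very terse) proof: the paper simply states that for $y\notin\bigcup_{j=2}^d\partial_j$ the one-step conditioning gives ${\mathbb E}_y[[(\beta,\alpha),Y_1]]-[(\beta,\alpha),y]=[(\beta,\alpha),y]({\mathbf p}(\beta,\alpha)-1)=0$. You have filled in the two facts the paper leaves implicit — that off the boundaries $\partial_2,\dots,\partial_d$ the constraining map $\pi_1$ is never active, and that the log-linear multiplicativity $[(\beta,\alpha),y+v]=[(\beta,\alpha),y]\,[(\beta,\alpha),v]$ turns the one-step expectation into $[(\beta,\alpha),y]\,{\mathbf p}(\beta,\alpha)$ — which is exactly what makes the paper's one-line computation valid.
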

\begin{proof}
For $y \in \Omega_Y-\bigcup_{j=2}^d \partial_j$ we have
\[
{\mathbb E}_y[ [ (\beta,\alpha), Y_1]] - [(\beta,\alpha),y] =
[(\beta,\alpha),y]
( {\mathbf p}(\beta,\alpha) -1 ) = 0,
\]
where the last equality follows from $(\beta,\alpha) \in {\mathcal H}.$
\end{proof}

Define the operator $D_a$ acting on functions on 
${\mathbb Z}^d$ and giving functions on $\partial_a$:
\begin{align*}
&D_aV = g,~~~~ V: {\mathbb Z}^d \rightarrow {\mathbb C},\\
& g( y)
\doteq
\left(\sum_{i \in a} \mu_i V(y)  +  
\sum_{ i\in a^c, j=0}^d p(i,j) V( y + v_{i,j}) \right)
- V(y);
\end{align*}
\begin{lemma}\label{l:Daeq0}
 $D_a V = 0$ 
if and only if $V$ is $Y$-harmonic on $\partial_a$.
\end{lemma}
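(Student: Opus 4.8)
The plan is to establish the pointwise identity $D_aV(y) = {\mathbb E}_y[V(Y_1)] - V(y)$ for every $y \in \partial_a$ by a straightforward conditioning on the first step of $Y$. By the definition of $Y$-harmonicity on a set, $V$ is $Y$-harmonic on $\partial_a$ exactly when $V(y) = {\mathbb E}_y[V(Y_1)]$ for all $y \in \partial_a$, while $D_aV = 0$ means $D_aV(y) = 0$ for all such $y$; so once the pointwise identity is in hand the ``if and only if'' is immediate.

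First I would spell out the one-step transition law of $Y$ started from a point $y \in \partial_a$, the face on which $y(i) = 0$ precisely for $i \in a \subseteq \{2,3,\dots,d\}$. Among the increments in ${\mathcal V}_Y$, those of the form $v_{i,j}$ with $i \in a$ — a customer departing a node that is currently empty — push the $i$-th coordinate to $-1 \notin {\mathbb Z}_+$, so $\pi_1$ cancels them and $Y_1 = y$; the total mass of such increments at node $i$ is $\sum_{j=0}^d p(i,j) = \mu_i$ (using $p(i,i)=0$). Every increment $v_{i,j}$ with $i \in a^c$ is, on the other hand, unconstrained at $y$: for $i \in \{2,\dots,d\}\cap a^c$ because $y(i) > 0$, and for $i \in \{0,1\}$ because the first coordinate ranges over all of ${\mathbb Z}$ and the increments $v_{1,j} = e_1 + e_j$ and $v_{0,j}$ never decrease a constrained coordinate. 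Hence
\[
{\mathbb E}_y[V(Y_1)] = \sum_{i \in a}\mu_i V(y) + \sum_{i \in a^c}\sum_{j=0}^d p(i,j) V(y + v_{i,j}).
\]

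Subtracting $V(y)$ from both sides yields exactly $D_aV(y)$ as written in its definition, so $D_aV(y) = 0$ holds iff $V(y) = {\mathbb E}_y[V(Y_1)]$; ranging over $y \in \partial_a$ finishes the argument. The only point that needs care is the bookkeeping in the previous paragraph: confirming that on $\partial_a$ the constrained increments of $Y$ are precisely the $v_{i,j}$ with $i \in a$, that each such source node contributes total mass $\mu_i$, and that the ``unstable'' first-coordinate increments are never constrained. All of this follows directly from the definitions of $\Omega_Y$, ${\mathcal V}_Y$ and $\pi_1$, and no estimates or auxiliary results are required.
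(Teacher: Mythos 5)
Your proof is correct and is exactly the argument the paper has in mind — the paper simply states ``The proof follows from the definitions'' and leaves the bookkeeping to the reader, which is what you have supplied. You correctly identify that on the (relatively open) face $\partial_a$ the increments $v_{i,j}$ with source $i\in a\subseteq\{2,\dots,d\}$ are cancelled by $\pi_1$ (contributing total mass $\mu_i$ at the point $y$ itself), that sources $i\in a^c\cap\{2,\dots,d\}$ are unconstrained because $y(i)>0$, and that sources $i\in\{0,1\}$ are never constrained since the first $Y$-coordinate is free in $\mathbb{Z}$; this gives the one-step identity $D_aV(y)=\mathbb{E}_y[V(Y_1)]-V(y)$ on $\partial_a$, from which the equivalence is immediate.
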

The proof follows from the definitions.
Define
\begin{equation}\label{e:Cjbetaalpha}
C(i,\beta,\alpha) \doteq 
\mu_i
-\sum_{j=0}^d p(i,j) [ (\beta,\alpha),v_{i,j} ].
\end{equation}
\begin{lemma}
For $y \in \partial_a$ and $(\beta,\alpha) \in {\mathcal H}$:
\begin{equation}\label{e:imageofD2shortgd}
D_i( [(\beta,\alpha,\cdot)])(y) =C(i,\beta,\alpha) [(\beta,\alpha),y].
\end{equation}
\end{lemma}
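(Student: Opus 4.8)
The plan is to unwind the definition of $D_i$ applied to the single exponential $V \doteq [(\beta,\alpha),\cdot]$ and collapse everything onto the characteristic equation \eqref{e:chareqgenerald}, exactly as in the proof of Lemma \ref{l:singleterm1}. The hypothesis $y\in\partial_a$ will in fact play no role in the computation, so I expect the identity to hold for every $y\in{\mathbb Z}^d$; I will state it as in the lemma but remark on this.

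The first step is to write out
\[
D_i V(y) = \mu_i V(y) + \sum_{i'\in\{i\}^c}\sum_{j=0}^d p(i',j)\,V(y+v_{i',j}) - V(y),
\]
with $\{i\}^c = \{0,1,2,\ldots,d\}\setminus\{i\}$, and then use the log-linearity of $V$ --- i.e. the identity $[(\beta,\alpha),y+v] = [(\beta,\alpha),y]\,[(\beta,\alpha),v]$ noted after the definition of $[(\beta,\alpha),\cdot]$ --- to factor $V(y)$ out of every summand. The second step is to observe that the resulting double sum is the full characteristic polynomial ${\mathbf p} = {\mathbf p}_\emptyset$ of \eqref{e:hamiltonianba} with the $i'=i$ block removed; since $p(i,i) = 0$ this removal is exact, so
\[
\sum_{i'\in\{i\}^c}\sum_{j=0}^d p(i',j)\,[(\beta,\alpha),v_{i',j}] = {\mathbf p}(\beta,\alpha) - \sum_{j=0}^d p(i,j)\,[(\beta,\alpha),v_{i,j}].
\]
The third step is to invoke $(\beta,\alpha)\in{\mathcal H}$, which by \eqref{e:chareqgenerald} means ${\mathbf p}(\beta,\alpha) = 1$; substituting this in, the $+1$ produced by ${\mathbf p}(\beta,\alpha)=1$ cancels the $-1$ coming from the $-V(y)$ term, leaving
\[
D_i V(y) = V(y)\left(\mu_i - \sum_{j=0}^d p(i,j)\,[(\beta,\alpha),v_{i,j}]\right),
\]
and the parenthesis is precisely $C(i,\beta,\alpha)$ by \eqref{e:Cjbetaalpha}, which is the claim.

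There is no real obstacle here; the statement is an algebraic identity. The only thing to watch is the index bookkeeping: one must use $\mu_i = \sum_{j=0}^d p(i,j)$ so that the ``holding'' coefficient in $D_i$ and the leading term of $C(i,\beta,\alpha)$ line up, keep the convention $p(i,i)=0$ (so that the $\sum_{i'\neq i}$ sum really equals the full $\sum_{i'}$ sum minus its $i'=i$ term, matching ${\mathbf p}_\emptyset$ with one block deleted), and use $e_0=0$ consistently when evaluating $[(\beta,\alpha),v_{i',j}]$ on transitions into or out of node $0$. Once these conventions are pinned down the three displayed lines constitute the entire proof.
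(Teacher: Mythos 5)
Your proof is correct and follows the paper's own argument exactly: unwind the definition of $D_i$ at a single log-linear term, factor out $[(\beta,\alpha),y]$, and trade $\sum_{i'\neq i,j} p(i',j)[(\beta,\alpha),v_{i',j}]$ for $1-\sum_{j} p(i,j)[(\beta,\alpha),v_{i,j}]$ via ${\mathbf p}(\beta,\alpha)=1$. Your bookkeeping remarks (the role of $p(i,i)=0$, $\mu_i=\sum_j p(i,j)$, $e_0=0$) and the observation that $y\in\partial_a$ is not actually used are accurate but do not change the route; this is the same two-line computation the paper gives.
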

\begin{proof}
\begin{align*}
D_i( [(\beta,\alpha,\cdot)])(y) &=
\left( \mu_i
+\sum_{i'\neq i, j=0}^d p(i',j) [ (\beta,\alpha),v_{i',j} ]  -1\right) 
[ (\beta,\alpha), y]\\
&=
\left( \mu_i
-\sum_{ j=0}^d p(i,j) [ (\beta,\alpha),v_{i,j} ] \right) 
[ (\beta,\alpha), y],
\end{align*}
where we used $1={\mathbf p}(\beta,\alpha)$.
\end{proof}

\begin{lemma}\label{l:singleterm2}
Suppose $(\beta,\alpha) \in {\mathcal H} \cap {\mathcal H}_i$. Then
$[(\beta,\alpha),\cdot]$ is $Y$-harmonic on 
$\Omega_Y-\bigcup_{j \in \{2,3,...,d\} - \{i\}} \partial_j$.
\end{lemma}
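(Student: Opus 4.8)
The plan is to combine Lemma \ref{l:singleterm1} with the computation in Lemma \ref{l:Daeq0} and equation \eqref{e:imageofD2shortgd}. The function $[(\beta,\alpha),\cdot]$ must be shown to be $Y$-harmonic at every point $y \in \Omega_Y - \bigcup_{j \in \{2,\dots,d\}-\{i\}} \partial_j$. Split the argument into two cases according to whether $y$ lies in $\partial_i$ or not. First, if $y \notin \partial_j$ for every $j \in \{2,\dots,d\}$, then in particular $y \in \Omega_Y - \bigcup_{j=2}^d \partial_j$, and $(\beta,\alpha) \in {\mathcal H}$ by hypothesis, so $Y$-harmonicity at $y$ is exactly the content of Lemma \ref{l:singleterm1}. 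Second, if $y \in \partial_i$ (this is the only remaining possibility on the allowed domain, since $y$ is excluded from $\partial_j$ for $j \neq i$), then $Y$-harmonicity at $y$ is by Lemma \ref{l:Daeq0} equivalent to $D_i([(\beta,\alpha),\cdot])(y) = 0$. By \eqref{e:imageofD2shortgd}, $D_i([(\beta,\alpha),\cdot])(y) = C(i,\beta,\alpha)\,[(\beta,\alpha),y]$, so it suffices to observe that $C(i,\beta,\alpha) = 0$.

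The remaining point is therefore to verify that $(\beta,\alpha) \in {\mathcal H} \cap {\mathcal H}_i$ forces $C(i,\beta,\alpha) = 0$. This is a direct algebraic identity: subtracting the characteristic polynomial ${\mathbf p}_\emptyset(\beta,\alpha)$ from ${\mathbf p}_i(\beta,\alpha)$ (i.e., ${\mathbf p}_a$ with $a = \{i\}$) isolates exactly the terms indexed by the node $i$, giving
\[
{\mathbf p}_i(\beta,\alpha) - {\mathbf p}_\emptyset(\beta,\alpha)
= \mu_i - \sum_{j=0}^d p(i,j)\,[(\beta,\alpha),v_{i,j}]
= C(i,\beta,\alpha).
\]
Since $(\beta,\alpha) \in {\mathcal H}$ means ${\mathbf p}_\emptyset(\beta,\alpha) = 1$ (using the characteristic equation \eqref{e:chareqgenerald}; note the mild notational clash between ${\mathcal H}$ as defined by ${\mathbf p}_a = 0$ and the characteristic equation ${\mathbf p}_a = 1$, which I would reconcile by using whichever normalization the paper actually intends) and likewise $(\beta,\alpha) \in {\mathcal H}_i$ gives ${\mathbf p}_i(\beta,\alpha) = 1$, the difference vanishes, so $C(i,\beta,\alpha) = 0$, completing the second case and hence the proof.

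I do not expect any serious obstacle here; the lemma is essentially a bookkeeping statement that assembles the two preceding lemmas. The only thing requiring a moment's care is the case split — making sure that every $y$ in the stated domain is covered by exactly one of "$y$ avoids all $\partial_j$, $j=2,\dots,d$" or "$y \in \partial_i$" — and the consistent use of the characteristic equation's normalization so that membership in both ${\mathcal H}$ and ${\mathcal H}_i$ really does kill $C(i,\beta,\alpha)$.
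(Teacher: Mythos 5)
Your proposal is correct and follows essentially the same route as the paper: reduce to the complement of all $\partial_j$ via Lemma \ref{l:singleterm1}, then use Lemma \ref{l:Daeq0} and \eqref{e:imageofD2shortgd} to reduce harmonicity on $\partial_i$ to $C(i,\beta,\alpha)=0$, which follows from $C(i,\beta,\alpha)={\mathbf p}_i(\beta,\alpha)-{\mathbf p}(\beta,\alpha)=1-1=0$. Your side-remark about the normalization mismatch between the displayed definition of ${\mathcal H}_a$ (written as ${\mathbf p}_a=0$) and the characteristic equation \eqref{e:chareqgenerald} (${\mathbf p}_a=1$) correctly identifies a typo in the paper; the intended reading, used consistently elsewhere including in the paper's own proof of this lemma, is ${\mathbf p}_a=1$.
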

\begin{proof}
$(\beta,\alpha) \in {\mathcal H}$ and Lemma \ref{l:singleterm1} imply
that $[(\beta,\alpha),\cdot]$ is $Y$-harmonic on 
$\Omega_Y-\bigcup_{j=2}^d \partial_j.$
That ${\mathbf p}(\beta,\alpha) = 1$ and ${\mathbf p}_i(\beta,\alpha) =1$
imply
\[
C(i,\beta,\alpha) = 
{\mathbf p}_i(\beta,\alpha)-{\mathbf p}(\beta,\alpha) =0.
\]
This and the last two lemmas imply that $[(\beta,\alpha),\cdot ]$ is
$Y$-harmonic on $\partial_i.$
\end{proof}

For $\alpha \in {\mathbb C}^{\{2,3,...,d\}}$ and
$j in \{2,3,4,...,d\}$ define
$\alpha\{j\} \in {\mathbb C}^{\{2,3,...d\}}$
as follows: 
\[
\alpha\{j\}(i) = \begin{cases}
 1,&\text{ if } i = j\\
 \alpha(i),&\text{ otherwise.}
\end{cases}
\]
For example, for $d=4$,
$j = 4$
and $\alpha = (0.2,0.3,0.4)$, $\alpha\{4\} = (0.2,0.3,1).$

For $i \in \{2,3,4,...,d\}$, multiplying both sides of the characteristic
equation ${\bf p}(\beta,\alpha) =0$
by
$\alpha(i)$
gives a second order polynomial equation in $\alpha(i)$: denote
the roots by $r_1$ and $r_2$. From the coefficients of the second
order polynomial we read
\begin{equation}\label{e:conjinr}
r_1 r_2 = 
\frac{ 
\sum_{j=0}^d p(i,j) [ (\beta, \alpha\{i\}), v_{i,j}]}
{\sum_{j=0}^d p(j,i) [ (\beta, \alpha\{i\}), v_{j,i}]}.
\end{equation}
From these two roots we get two points $(\beta,\alpha_1),
(\beta,\alpha_2)$ on ${\mathcal H}$
whose components are
\[
\alpha_1(i) =r_1, \alpha_2(i)=r_2, 
\]
and
\begin{equation}\label{e:conjugacydg2}
\alpha_1(j) =\alpha_2(j) = \alpha(j), j\neq i.
\end{equation}
By \eqref{e:conjinr}
\begin{equation}\label{e:conjalpha}
\alpha_1(i)\alpha_2(i)
=
\frac{ 
\sum_{j=0}^d p(i,j) [ (\beta, \alpha\{i\}), v_{i,j}]}
{\sum_{j=0}^d p(j,i) [ (\beta, \alpha\{i\}), v_{j,i}]}.
\end{equation}
If $\alpha_1(i)\neq \alpha_2(i)$
we call $(\beta,\alpha_1) \neq (\beta,\alpha_2) \in {\mathcal H}$
$i$-conjugate.
Note that $\alpha_1\{i\} = \alpha_2\{i\} = \alpha\{i\}$; therefore
\eqref{e:conjalpha} can also be written as
\begin{equation}\label{e:conja1toa2}
\alpha_1(i)\alpha_2(i)
=
\frac{ 
\sum_{j=0}^d p(i,j) [ (\beta, \alpha_1\{i\}), v_{i,j}]}
{\sum_{j=0}^d p(j,i) [ (\beta, \alpha_1\{i\}), v_{j,i}]}.
=
\frac{ 
\sum_{j=0}^d p(i,j) [ (\beta, \alpha_2\{i\}), v_{i,j}]}
{\sum_{j=0}^d p(j,i) [ (\beta, \alpha_2\{i\}), v_{j,i}]}.
\end{equation}

Next proposition generalizes \cite[Proposition 4]{sezer2018approximation}
to the current setup. 

\begin{proposition}\label{p:harmonicYtwoterms}
Suppose that 
$(\beta,\alpha_1)$
and $(\beta,\alpha_2)$ are $i$-conjugate and $C(i,\beta,\alpha_j)$,
$j=1,2$ are well defined.
Then
\begin{equation*}
h_\beta \doteq C(i,\beta,\alpha_2) [(\beta,\alpha_1),\cdot]
- C(i,\beta,\alpha_1) [(\beta,\alpha_2),\cdot]
\end{equation*}
is $Y$-harmonic on $\partial_i$.
\end{proposition}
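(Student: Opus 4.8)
The plan is to reduce the statement to the vanishing of the operator $D_i$. By Lemma \ref{l:Daeq0}, $h_\beta$ is $Y$-harmonic on $\partial_i$ if and only if $D_i h_\beta \equiv 0$ on $\partial_i$, so this is the only thing to check. (As a side remark, since $h_\beta$ is a linear combination of $[(\beta,\alpha_1),\cdot]$ and $[(\beta,\alpha_2),\cdot]$ and both points lie on $\mathcal{H}$, Lemma \ref{l:singleterm1} already shows $h_\beta$ is $Y$-harmonic on $\Omega_Y-\bigcup_{j=2}^d\partial_j$; this is not needed for the proposition but explains why these functions are useful.)

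First I would use the linearity of $D_i$ together with the identity \eqref{e:imageofD2shortgd}, which applies because $(\beta,\alpha_1),(\beta,\alpha_2)\in\mathcal{H}$ and $C(i,\beta,\alpha_j)$ is well defined by hypothesis. For $y\in\partial_i$ this gives
\begin{align*}
D_i h_\beta(y) &= C(i,\beta,\alpha_2)\,C(i,\beta,\alpha_1)\,[(\beta,\alpha_1),y] - C(i,\beta,\alpha_1)\,C(i,\beta,\alpha_2)\,[(\beta,\alpha_2),y]\\
&= C(i,\beta,\alpha_1)\,C(i,\beta,\alpha_2)\,\bigl([(\beta,\alpha_1),y] - [(\beta,\alpha_2),y]\bigr).
\end{align*}
The point of the particular choice of coefficients in $h_\beta$ is precisely that the two prefactors of the exponentials become equal, so only the difference $[(\beta,\alpha_1),y]-[(\beta,\alpha_2),y]$ remains. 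Now the key observation: on $\partial_i$ one has $y(i)=0$, so the factor $\alpha(i)^{y(i)}$ in $[(\beta,\alpha),y]$ is $1$, and since $(\beta,\alpha_1)$ and $(\beta,\alpha_2)$ are $i$-conjugate they share the same $\beta$ and satisfy $\alpha_1(j)=\alpha_2(j)$ for all $j\neq i$ by \eqref{e:conjugacydg2}. Hence $[(\beta,\alpha_1),y]=[(\beta,\alpha_2),y]$ for $y\in\partial_i$, the bracket vanishes, $D_i h_\beta\equiv 0$ on $\partial_i$, and the conclusion follows from Lemma \ref{l:Daeq0}.

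I do not anticipate a genuine obstacle; the computation is short once \eqref{e:imageofD2shortgd} is in hand. The only points that deserve care are: (i) checking that the hypothesis ``$C(i,\beta,\alpha_j)$ well defined'' is exactly what licenses both the formation of $h_\beta$ and the use of \eqref{e:imageofD2shortgd} (the terms $[(\beta,\alpha_j),v_{i,k}]$ entering $C$ may involve negative exponents and could otherwise be ill defined); and (ii) being explicit that the cancellation is engineered by the coefficients $C(i,\beta,\alpha_2)$ and $-C(i,\beta,\alpha_1)$, which is the only nonobvious ingredient and which is what generalizes \cite[Proposition 4]{sezer2018approximation}. I would also note that the product relation \eqref{e:conjalpha} defining $i$-conjugacy is not used in the computation beyond guaranteeing that $(\beta,\alpha_1)$ and $(\beta,\alpha_2)$ are two distinct points of $\mathcal{H}$ agreeing in $\beta$ and in all coordinates other than $i$; that structure, not the explicit value of the product, is what the argument needs.
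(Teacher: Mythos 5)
Your proof is correct and follows the same route as the paper's: apply \eqref{e:imageofD2shortgd} and linearity of $D_i$ to reduce $D_i h_\beta$ to a common multiple of $[(\beta,\alpha_1),\cdot]-[(\beta,\alpha_2),\cdot]$, then observe via \eqref{e:conjugacydg2} that the two brackets agree on $\partial_i$ (since $y(i)=0$ there and $\alpha_1,\alpha_2$ only differ in coordinate $i$), and conclude with Lemma \ref{l:Daeq0}. Your added remarks — why well-definedness of $C$ matters and which part of $i$-conjugacy is actually used — are accurate but not substantive departures from the paper's argument.
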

\begin{proof}
The definition \eqref{e:Cjbetaalpha}  of $C$, \eqref{e:imageofD2shortgd} and 
linearity of $D_i$ imply
\begin{align*}
D_i(h_\beta) &=
C(i,\beta,\alpha_2) C(i,\beta,\alpha_1) 
[(\beta,\alpha_1),\cdot]
- 
C(i,\beta,\alpha_2) C(i,\beta,\alpha_1) 
[(\beta,\alpha_2),\cdot]
\intertext{ \eqref{e:conjugacydg2} implies
$[(\beta,\alpha_1),z] = [(\beta,\alpha_2),z]$ for $z \in \partial_i$
and therefore the last line reduces to}
&=0.
\end{align*}
Lemma \ref{l:Daeq0} now implies that $h_\beta$ is $Y$-harmonic
on $\partial_i$.
\end{proof}

The class of $Y$-harmonic functions we identify in this section
is based on graphs with labeled edges; let us now 
give a precise definition of these.
We denote any graph by its adjacency matrix $G$; the structure of $G$ is as follows.
Let 
$V_G$, a finite set, 
denote the set of vertices of $G$; let $L$ denote the set of labels.
For two vertices $i \neq j$, $G(i,j) = 0$ if they are
disconnected, and $G(i,j) = l$ if an edge with label 
$l \in L$ connects them; such an edge will be called an $l$-edge. 
As usual, an edge from a vertex to itself is called a loop.
For a vertex $j \in V_G$, 
$G(j,j)$ is the set of the labels of the loops on $j$. 
Thus $G(j,j)\subset L$ 
is set
valued.

In graph theory a graph is said to be $k$-regular if all of its vertices
have the same degree (number of edges) $k$ \cite[page 5]{diestelgt}. 
We generalize this definition
as follows:
\begin{definition}
Let $G$ and $L$ be as above.
If each vertex $j \in V_G$ has
a unique $l$-edge (perhaps an $l$-loop) 
for all $l \in L$ we will call $G$ $L$-regular.
\end{definition}

\begin{definition}\label{d:Yharmonic} A $Y$-harmonic system consists
of a $\{2,3,4,...,d\}$-regular graph $G$,
the variables
$ (\beta,\alpha_j)  \in {\mathbb C}^d$, ${\boldsymbol c}_j \in {\mathbb C}$,  $j \in V_G$
and these equations/constraints:
\begin{enumerate}
\item
$(\beta,\alpha_j) \in {\mathcal H}, {\boldsymbol c}_j \in {\mathbb C}-\{0\}, j \in V_G$, 
\item
$\alpha_{i} \neq \alpha_{j}$,  if $i \neq j, i,j  \in V_G$,
\item
$\alpha_{i}, \alpha_{j}$  are $G(i,j)$-conjugate if  
$ G(i,j) \neq 0$, $i \neq j, i,j \in V_G$,
\item 
\begin{equation}
\label{e:correctmultipliers}
{\boldsymbol c}_{i}/{\boldsymbol c}_{j} = 
-\frac{C(G(i,j),\beta,\alpha_j)}{C(G(i,j),\beta,\alpha_i)}, \text{ if } G(i,j) \neq 0,
\end{equation}
\item
$(\beta,\alpha_j) \in {\mathcal H}_l \text{ for all } l \in G(j,j), j \in V_G.$
\end{enumerate}
\end{definition}

\begin{proposition}\label{p:simpleharmonicfunctions}
Suppose that 
a $Y$-harmonic system with graph $G$
has a solution $({\boldsymbol c}_j, (\beta,\alpha_j), j \in V_G)$.
Then
\begin{equation}\label{e:defhG}
h_G \doteq \sum_{j \in V_G} {\boldsymbol c}_j [(\beta,\alpha_j),\cdot]
\end{equation}
is $Y$-harmonic.
\end{proposition}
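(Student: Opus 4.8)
The plan is to verify the $Y$-harmonicity of $h_G$ pointwise, splitting the domain $\Omega_Y$ according to which boundary (if any) a point $y$ belongs to. Since $X$ (hence $Y$) has increments touching only the coordinate boundaries $\partial_j$ for $j \in \{2,3,\dots,d\}$, any $y \in \Omega_Y$ either lies in the ``interior'' $\Omega_Y - \bigcup_{j=2}^d \partial_j$, or in exactly one stratum $\partial_a := \bigcap_{m\in a}\partial_m \cap \bigcap_{m\notin a}\partial_m^c$ for some nonempty $a \subset \{2,3,\dots,d\}$. On the interior, each summand $[(\beta,\alpha_j),\cdot]$ is $Y$-harmonic by Lemma \ref{l:singleterm1} (using $(\beta,\alpha_j)\in{\mathcal H}$, which is constraint (1) of the harmonic system), so $h_G$ is $Y$-harmonic there by linearity. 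The real content is at the boundary strata.

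The key step is the boundary computation. Fix a stratum $\partial_a$, $a \neq \emptyset$, and apply the operator $D_a$ of the section to $h_G$; by Lemma \ref{l:Daeq0} it suffices to show $D_a h_G = 0$ on $\partial_a$. First I would observe that $D_a = \sum_{i \in a} D_i^{\mathrm{(bdry)}}$ in the appropriate bookkeeping sense, or more cleanly, reduce to the single-boundary case: the operator $D_a$ differs from the full generator only through the terms indexed by $i \in a$, and using $\mathbf{p}(\beta,\alpha_j)=1$ one finds $D_a V(y) = \sum_{i\in a} \big(\mu_i V(y) - \sum_{j=0}^d p(i,j) V(y+v_{i,j})\big)$ applied termwise. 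Combined with \eqref{e:imageofD2shortgd}, for each summand this gives $D_a([(\beta,\alpha_j),\cdot])(y) = \big(\sum_{i\in a} C(i,\beta,\alpha_j)\big)[(\beta,\alpha_j),y]$ for $y\in\partial_a$. So
\[
D_a h_G(y) = \sum_{j\in V_G} {\boldsymbol c}_j \Big(\sum_{i\in a} C(i,\beta,\alpha_j)\Big)[(\beta,\alpha_j),y], \qquad y \in \partial_a.
\]
Now I would split $a = a' \sqcup a''$, where $a''$ consists of those $i\in a$ that appear as a loop label somewhere relevant and $a'$ of those that appear as a genuine conjugacy edge. For $i$ such that $(\beta,\alpha_j)\in{\mathcal H}_i$ — which by constraint (5) happens whenever $i \in G(j,j)$ — we get $C(i,\beta,\alpha_j) = \mathbf{p}_i(\beta,\alpha_j) - \mathbf{p}(\beta,\alpha_j) = 0$, so those terms vanish identically and contribute nothing.

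The remaining terms, indexed by conjugacy edges, must cancel in pairs. For a label $i\in a$ that is not a loop label, $L$-regularity of $G$ forces the vertices of $V_G$ to pair up along $i$-edges (a perfect matching on $\{j : i \notin G(j,j)\}$), and for each such $i$-edge $\{j_1,j_2\}$ the pair $(\beta,\alpha_{j_1}),(\beta,\alpha_{j_2})$ is $i$-conjugate by constraint (3). Since conjugacy means $\alpha_{j_1}$ and $\alpha_{j_2}$ agree in all coordinates except $i$ (equation \eqref{e:conjugacydg2}), and $y\in\partial_i$ means $y(i)=0$, the values match: $[(\beta,\alpha_{j_1}),y] = [(\beta,\alpha_{j_2}),y]$. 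The multiplier relation \eqref{e:correctmultipliers}, namely ${\boldsymbol c}_{j_1} C(i,\beta,\alpha_{j_1}) = -{\boldsymbol c}_{j_2} C(i,\beta,\alpha_{j_2})$, then makes the $j_1$ and $j_2$ contributions to $D_a h_G(y)$ cancel exactly. Summing over all $i$-edges and over all non-loop labels $i\in a$, and adding the vanishing loop-label terms, gives $D_a h_G = 0$ on $\partial_a$, hence $h_G$ is $Y$-harmonic on $\partial_a$. Ranging over all $a$ completes the proof.

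The main obstacle I anticipate is the bookkeeping for a stratum $\partial_a$ with $|a|\ge 2$: one must check that the conjugacy/multiplier relations along the separate single-label edges ($i$-edges for the various $i\in a$) are simultaneously compatible and that $D_a$ genuinely decomposes as the sum of the single-boundary operators $D_i$, $i\in a$. This is where it matters that a vertex $j$ carries, for every label $i\in a$, \emph{either} an $i$-loop (forcing $(\beta,\alpha_j)\in{\mathcal H}_i$ and hence $C(i,\beta,\alpha_j)=0$) \emph{or} a unique $i$-edge to a partner — $L$-regularity is exactly what guarantees this dichotomy holds for each label separately, so the cancellations for different $i\in a$ do not interfere. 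Everything else (the interior case, the identity \eqref{e:imageofD2shortgd}, the evaluation of conjugate functions on $\partial_i$) is already available from the lemmas above.
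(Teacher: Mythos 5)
Your proof is correct and takes essentially the same approach as the paper: both rest on the decomposition $D_a h_G = \sum_{i\in a} D_i h_G$ (equation \eqref{e:Dadecomposed}) together with the observation that for each fixed label $i$ the sum $\sum_{j\in V_G}{\boldsymbol c}_j\,C(i,\beta,\alpha_j)[(\beta,\alpha_j),\cdot]$ vanishes on $\partial_i$ because loop terms contribute $C(i,\beta,\alpha_j)=0$ and the $i$-edges pair vertices into exactly cancelling contributions via \eqref{e:conjugacydg2} and \eqref{e:correctmultipliers}. The only cosmetic difference is that you unroll the paper's induction on $|a|$ into a single direct calculation, while the paper isolates the $|a|=1$ case via Proposition \ref{p:harmonicYtwoterms} and then peels off one index at a time; the underlying cancellation mechanism is identical.
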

In the proof the following decomposition is useful:
for $y \in \partial_a$ and $(\beta,\alpha) \in {\mathcal H}$:
\begin{align}
D_a( [(\beta,\alpha,\cdot)])(y) &=
\left( \sum_{i \in a} \mu_i
+\sum_{i \in a^c, j} p(i,j) [ (\beta,\alpha),v_{i,j} ]  -1\right) 
[ (\beta,\alpha), y]\notag \\
&=
\left( \sum_{i \in a} \mu_i
-\sum_{i \in a, j=0}^d p(i,j) [ (\beta,\alpha),v_{i,j} ] \right) 
[ (\beta,\alpha), y]\notag \\
&=
\sum_{i \in a} \left(
\mu_i - \sum_{ j=0}^d p(i,j) [ (\beta,\alpha),v_{i,j} ] \right) 
[ (\beta,\alpha), y]\notag \\
&= \sum_{i \in a} D_i( [(\beta,\alpha,\cdot)])(y) \label{e:Dadecomposed}.
\end{align}

\begin{proof}[Proof of Proposition \ref{p:simpleharmonicfunctions}]
By Lemma \ref{l:singleterm1},
all summands of $h_G$ are $Y$-harmonic on $\Omega_Y-\bigcup_{j=2}^d \partial_j$ because $(\beta,\alpha_j)$,$ j \in V_G$ are all on the characteristic surface
${\mathcal H}$.
It remains to show that $h_G$ is $Y$-harmonic on all $\partial_a \cap \Omega_Y$
$a \subset \{2,3,4,...,d\}$ and $a \neq \emptyset.$ We will do this
by induction on $|a|$. Let us start with $|a|=1$, i.e., $a = \{l\}$, for 
some $l \in \{2,3,4,..,d\}$
Take any vertex $i \in V_G$; if $l \in G(i,i)$
then $(\beta,\alpha_i) \in {\mathcal H}_l$ and by Lemma \ref{l:singleterm2}
$[(\beta,\alpha_i),\cdot]$ is $Y$-harmonic on $\partial_l$.
Otherwise, the definition of a harmonic system implies that
there exists a unique vertex $j$ of $G$
such that $G(i,j) = l$. This implies, by definition, that
 $(\beta,\alpha_i)$ and $(\beta,\alpha_{j})$
are $l$-conjugate and by Proposition \ref{p:harmonicYtwoterms} 
and \eqref{e:correctmultipliers} 
\[
{\boldsymbol c}_i [(\beta,\alpha_i),\cdot] + {\boldsymbol c}_{j} [ (\beta,\alpha_{j}),\cdot]
\]
is $Y$-harmonic on $\partial_l$. 
Thus, all summands
of $h_G$ are either $Y$-harmonic on $\partial_l$ 
or form pairs which are so;
this implies that the sum $h_G$ is $Y$-harmonic on $\partial_l$.

Now assume $h_G$ is $Y$-harmonic for all
$a'$ with $|a'| = k-1$; fix an $a \subset \{2,3,4,...,d\}$
such that $|a| = k$ and a $i \in a$;
by \eqref{e:Dadecomposed}
\[
D_a( h_G) = D_{a-\{i\}}(h_G) + D_i(h_G).
\]
The induction assumption and Lemma \ref{l:Daeq0} imply 
that the first term on the right is zero; the same lemma
and the previous paragraph imply the same for the second term.
Then $D_a(h_G) = 0$; this and Lemma \ref{l:Daeq0} finish the
proof of the induction step.
\end{proof}

\subsection{Simple extensions}
In this subsection we show how the solution of a harmonic system 
for a lower dimensional process can provide solutions for a related harmonic
system of a higher dimensional process provided that the higher
dimensional process is a ``simple extension'' (defined below) 
of the lower dimensional one. 

For two integers $d_2 > d_1 > 0 $ let 
$p_i \in {\mathbb R}^{(d_i + 1) \times (d_i +1)}$ ,$i=1,2$, be two transition
matrices.
Define
$p' \in {\mathbb R}^{ (d_1 +1) \times (d_1+1)}$
as
\begin{align}
p'(i,j) &= p_2(i,j) \label{e:defppij}
\intertext{if $i\in \{0,1,2,3,...,d_1\}$, $j \in \{1,2,3,...,d_1\}$ and}
\label{e:defppi0}
p'(i,0) &= p_2(i,0) + \sum_{j=d_1+1}^{d_2}
p_2(i,j), 
~~~i\in \{1,2,3,...,d_1\}.
\end{align}
\begin{definition}\label{d:simpleextension}
We say that $p_2$ is a simple extension of $p_1$ if 
\begin{align}
&p' =  \left(\sum_{i,j=0}^{d_1} p'(i,j)\right) p_1 , p' \neq 0,
\label{d:pandp1}
\\
&p_2(i,j) = 0 \text{ if }i \in \{d_1 +1,...,d_2\},
j \in \{1,2,3,...,d_1\}.
\label{e:constraintonp1}
\end{align}
\end{definition}

An example:
\begin{equation}\label{ex:simpleextension}
p_1 = \left( \begin{matrix}
	0 & 1/7 & 0 \\
	0 &  0      & 4/7 \\
	2/7 & 0   & 0 
\end{matrix}
\right), 
p_2 = \left( \begin{matrix}
	  0       & 0.05   & 0     & 0 &  0.02\\
	  0       & 0         & 0.2 & 0 & 0         \\
	  0.1 & 0         & 0    & 0.1 & 0  \\
	 0        & 0         & 0    & 0        & 0.25\\
	 0.1 & 0        & 0    & 0.18  & 0 
	\end{matrix}
	\right).
\end{equation}
Figure \ref{f:simpleextensionextex} shows the topologies of the networks
corresponding to $p_1$ and $p_2.$

\begin{figure}[h]
\begin{center}
\scalebox{0.8}{
\centerline{\input{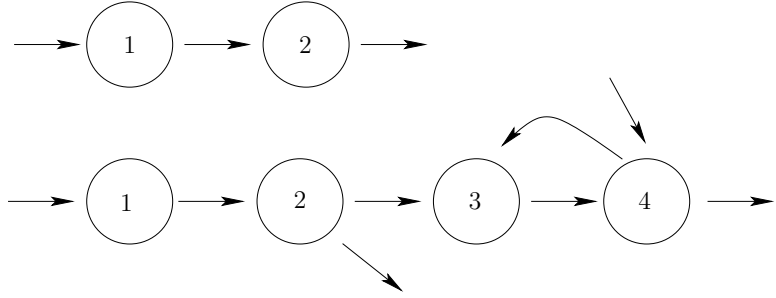}}}
\end{center}

\vspace{-0.65cm}
\caption{\hspace{0.25cm}Networks corresponding to $p_1$ and $p_2$ of \eqref{ex:simpleextension}, 
second is a simple extension of the first\label{f:simpleextensionextex}}
\end{figure}

\begin{definition}\label{d:simpleextensionG}
Let $G$ be a $L$-regular.
Let ${L}_1 \supset L$ be another set of labels.
$G$'s simple extension $G_1$ to an $L_1$-regular graph
is defined as follows: $V_{G_1} = V_G$
and
\begin{align}
G_1(i,j) &= G(i,j),  i \neq j,  i,j\in V_G \label{d:edgecomplete1}\\
G_1(j,j) &= G(j,j) \cup (L_1 - L), j \in V_G.\notag
\end{align}
\end{definition}
To get $G_1$ from $G$ one adds to each vertex of $G$ an $l$-loop for each
$l \in L_1 - L$. $G$ is $L$-regular implies that $G_1$ is $L_1$-regular.
Figure \ref{f:simpleextendGtex} gives an example.

\begin{figure}[h]
\begin{center}
\scalebox{0.8}{
\centerline{\input{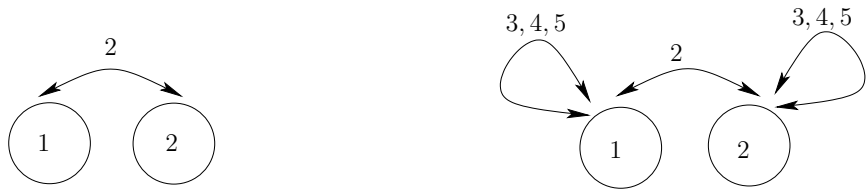}}}
\end{center}

\vspace{-0.65cm}
\caption{\hspace{0.25cm}A
$\{2\}$-regular graph and its  simple extension 
to a $\{2,3,4,5\}$-regular graph\label{f:simpleextendGtex}}
\end{figure}

If $Y^2$ is a simple
extension of $Y^1$, any solution to a $Y^1$-harmonic system
implies a related solution to a related $Y^2$-harmonic system:
\begin{proposition}\label{p:invarianceundersimpleextensions}
For $d_2 > d_1 > 1$ let 
$p_i \in {\mathbb R}_+^{(d_i + 1) \times (d_i + 1)}$, $i=1,2$ be
transition matrices such that $p_2$ is a simple extension of $p_1.$
Let $Y^i$ be defined through \eqref{d:ygeneral} with $d=d_i$, $i=1,2$
and $p = p_i$, $i=1,2.$
Let $G_i$, $i=1,2$ be
 $\{2,3,...,d_i\}$-regular graphs for $Y^2$ and $Y^1$ such that $G_2$
is a simple extension of $G_1$ (in the sense of Definition \ref{d:simpleextensionG}).
Suppose $(\beta,\alpha_k), {\boldsymbol c}_k, k \in V_{G_1}$
solve the harmonic system associated with $G_1$. 
For $k \in V_{G_2} = V_{G_1}$ define
$\alpha^2_k \in {\mathbb C}^{d_2 + 1}$ as follows
\begin{align}
\label{e:defa1p1}
\alpha^2_k(j)  &= \alpha_k^1(j),~ j\in \{2,3,4,...,d_1\} \\
\label{e:defa1p2}
\alpha^2_k(j)&= \beta, j \in \{d_1+1,d_1+2,...,d_2\}.
\end{align}
Then $(\beta,\alpha^2_k), {\boldsymbol c}_k, k \in V_{G_2}$ 
solves the harmonic system
defined by  $G_2$ and $p_2.$
\end{proposition}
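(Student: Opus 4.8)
The plan is to verify, one at a time, the five requirements of Definition~\ref{d:Yharmonic} for the data $(\beta,\alpha^2_k),{\boldsymbol c}_k$, $k\in V_{G_2}=V_{G_1}$, taken relative to $p_2$ and $G_2$, using as input that the corresponding requirements hold for $(\beta,\alpha^1_k),{\boldsymbol c}_k$ relative to $p_1$ and $G_1$. Write $N\doteq\{d_1+1,\dots,d_2\}$ for the ``new'' coordinates and labels; then $L_1-L=N$, the off-diagonal part of $G_2$ equals that of $G_1$ (so every off-diagonal label of $G_2$ lies in $\{2,\dots,d_1\}$), and $G_2(j,j)=G_1(j,j)\cup N$. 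Decorate ${\mathbf p}_a$, ${\mathcal H}_a$ and $C$ with a superscript $(1)$ or $(2)$ to record which of $p_1,p_2$ is used, set $\mu_i^{(2)}=\sum_j p_2(i,j)$ and $s\doteq\sum_{i,j=0}^{d_1}p'(i,j)$, and recall from Definition~\ref{d:simpleextension} that $p'=s\,p_1$ with $p'\neq0$, hence $s\neq0$.

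The engine of the proof is a single ``reduction identity''. Its source is the observation that, because $\alpha^2_k(m)=\beta$ for all $m\in N$, every increment $v_{i,m}$ out of a new node $i\in N$ satisfies $[(\beta,\alpha^2_k),v_{i,m}]=1$, while every increment $v_{i,m}$ from an old node $i\notin N$ into a new node $m\in N$ satisfies $[(\beta,\alpha^2_k),v_{i,m}]=[(\beta,\alpha^1_k),v_{i,0}]$. Using these together with constraint \eqref{e:constraintonp1} (from a new node one can only go to $0$ or to another new node), the new-node terms of the $p_2$-sums fold away; then \eqref{e:defppij}--\eqref{e:defppi0} let one group $p_2(i,0)+\sum_{m\in N}p_2(i,m)=p'(i,0)$, and \eqref{d:pandp1} turns $p'$ into $s\,p_1$. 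I expect this to yield, for every $k\in V_{G_1}$ and every old label $l\in\{2,\dots,d_1\}$,
\[
{\mathbf p}^{(2)}(\beta,\alpha^2_k)=s\,{\mathbf p}^{(1)}(\beta,\alpha^1_k)+(1-s),\qquad C^{(2)}(l,\beta,\alpha^2_k)=s\,C^{(1)}(l,\beta,\alpha^1_k),
\]
where $1-s=\sum_{i\in N}\mu_i^{(2)}$ arises from the unit-weight new-node rows together with $\sum_{i,j=0}^{d_2}p_2(i,j)=1$; and, for a new label $l\in N$, since all exits of $l$ carry weight $1$, $C^{(2)}(l,\beta,\alpha^2_k)=\mu_l^{(2)}-\mu_l^{(2)}=0$.

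Granting this, the five checks are short. For (1): ${\mathbf p}^{(1)}(\beta,\alpha^1_k)=1$ gives ${\mathbf p}^{(2)}(\beta,\alpha^2_k)=s\cdot1+(1-s)=1$, i.e.\ $(\beta,\alpha^2_k)\in{\mathcal H}^{(2)}$, and ${\boldsymbol c}_k\neq0$ is inherited. For (2): $\alpha^2_i$ and $\alpha^2_j$ agree on $N$ but restrict on $\{2,\dots,d_1\}$ to $\alpha^1_i\neq\alpha^1_j$. For (3): with $l=G_2(i,j)=G_1(i,j)\neq0$ (necessarily $l\in\{2,\dots,d_1\}$), the points $(\beta,\alpha^2_i),(\beta,\alpha^2_j)$ both lie on ${\mathcal H}^{(2)}$ by (1), agree off coordinate $l$ (on $\{2,\dots,d_1\}$ by the $l$-conjugacy of $\alpha^1_i,\alpha^1_j$, on $N$ because both entries equal $\beta$), and have distinct $l$-coordinates, so they are the two points obtained from the roots of the $p_2$-characteristic equation in $\alpha(l)$ and hence $l$-conjugate (alternatively, one checks directly that the $p_2$-version of \eqref{e:conjalpha} reduces to the $p_1$-version, its numerator's new-node terms folding into the $v_{l,0}$-term and its denominator's new-node terms vanishing by \eqref{e:constraintonp1}, so the common factor $s$ cancels). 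For (4): $-C^{(2)}(l,\beta,\alpha^2_j)/C^{(2)}(l,\beta,\alpha^2_i)=-C^{(1)}(l,\beta,\alpha^1_j)/C^{(1)}(l,\beta,\alpha^1_i)={\boldsymbol c}_i/{\boldsymbol c}_j$, the $s$'s cancelling. For (5): if $l\in G_1(j,j)$, condition (5) of the $G_1$-system together with $(\beta,\alpha^1_j)\in{\mathcal H}^{(1)}$ forces $C^{(1)}(l,\beta,\alpha^1_j)=0$ (recall $C(l,\cdot)={\mathbf p}_l-{\mathbf p}$, cf.\ the proof of Lemma~\ref{l:singleterm2}), hence $C^{(2)}(l,\beta,\alpha^2_j)=0$; if $l\in N$ we already have $C^{(2)}(l,\beta,\alpha^2_j)=0$; in both cases ${\mathbf p}^{(2)}_l(\beta,\alpha^2_j)={\mathbf p}^{(2)}(\beta,\alpha^2_j)+C^{(2)}(l,\beta,\alpha^2_j)=1$, i.e.\ $(\beta,\alpha^2_j)\in{\mathcal H}^{(2)}_l$.

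The main obstacle is purely the bookkeeping behind the reduction identity: one must track carefully which entries $p_2(i,j)$ equal $p'(i,j)$ (those with $j\in\{1,\dots,d_1\}$), which are absorbed into $p'(i,0)$ (the new-node destinations, by \eqref{e:defppi0}), and which sit in the ``new block'' and therefore contribute only unit weights once $\alpha^2_k=\beta$ on $N$; the whole point of the choice \eqref{e:defa1p2} is that it makes this last group collapse. Everything downstream of the identity is mechanical.
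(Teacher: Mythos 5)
Your proof is correct, and it is essentially the paper's argument with a cleaner organizing device. The paper proves the characteristic-surface membership by the explicit computation you sketch (splitting the $p_2$-sum into ``old'', ``old-to-new'', and ``new'' blocks, using \eqref{e:constraintonp1}, \eqref{e:defppij}--\eqref{e:defppi0} and finally \eqref{d:pandp1} to collapse to the $p_1$-sum), and it proves the conjugacy condition by directly checking that the $p_2$-version of \eqref{e:conjalpha} collapses to the $p_1$-version (the paper's \eqref{e:ratiosimplified}), which is the ``alternatively'' branch of your argument. What you do differently and better is to package the folding of the new block into the two identities ${\mathbf p}^{(2)}(\beta,\alpha^2_k)=s\,{\mathbf p}^{(1)}(\beta,\alpha^1_k)+(1-s)$ and $C^{(2)}(l,\beta,\alpha^2_k)=s\,C^{(1)}(l,\beta,\alpha^1_k)$ (old $l$), $C^{(2)}(l,\beta,\alpha^2_k)=0$ (new $l$); this makes conditions (1), (4) and (5) immediate, whereas the paper declares (4)--(5) ``parallel and omitted.'' Your primary route for (3) -- two points of ${\mathcal H}^{(2)}$ that agree off coordinate $l$ with distinct $l$-coordinates are by construction the two roots of the quadratic and hence $l$-conjugate -- is also valid and shorter than the paper's direct verification of \eqref{e:conjalpha}. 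One small imprecision: you write parenthetically that $1-s=\sum_{i\in N}\mu_i^{(2)}$, but in fact $1-s=\sum_{i,j\in\{0\}\cup N}p_2(i,j)=\sum_{i\in N}\mu_i^{(2)}+\sum_{j\in N}p_2(0,j)$, and the last term need not vanish (e.g.\ $p_2(0,4)=0.02$ in the paper's worked example \eqref{ex:simpleextension}); this does not affect any of the five checks, since you only ever use ${\mathbf p}^{(2)}=s\,{\mathbf p}^{(1)}+(1-s)$, which is correct regardless of how $1-s$ is expressed.
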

The definition \eqref{e:defa1p2} extends
$\alpha^1_k \in {\mathbb C}^{d_1-1}$ to $\alpha^2_k \in {\mathbb C}^{d_2-1}$
by assigning the value $\beta$
to the additional dimensions of
$\alpha^2_k$.
This, \eqref{d:pandp1} and \eqref{e:constraintonp1} imply that, when
$\alpha^2_k$ is defined as above, the
harmonic system defined by $G_2$ reduces to that defined by $G_1$;
the details are as follows:
\begin{proof}
By assumption, $(\beta,\alpha_k^1)$, ${\boldsymbol c}_k$,
$k \in V_{G_0}$ satisfy the five conditions
listed under Definition \ref{d:Yharmonic} for $G=G_1$ and $p=p_1$.
We want to show that this implies that
the same holds for 
$(\beta,\alpha^2_k), {\boldsymbol c}_k, k \in V_{G_2}$  
for $G=G_2$ and $p=p_2.$ 

Fix any $k \in V_{G_1}$;
\eqref{e:defa1p1} and \eqref{e:defa1p2} imply
\begin{equation}\label{e:reduction1}
[(\beta,\alpha^2_k),v_{i,j}^2 ] 
=
\begin{cases} [(\beta,\alpha_k^1),v_{i,j}^1 ], &\text{ if }j \le d_1,\\
 [(\beta,\alpha_k^1),v_{i,0}^1 ], &\text{ if }j > d_1,
\end{cases}
\end{equation}
for all $i \in \{0,1,2,...,d_1\}$, $j \in \{0,1,2,...,d_2\}$, $i\neq j.$
Similarly,
\eqref{e:defa1p2} implies
\begin{equation}\label{e:ba1v1p3}
[(\beta,\alpha^2_k), v^2_{i,j} ] = 1
\end{equation}
for all
$i,j \in \{0,d_1 + 1,d_1 +2,...,d_2\}$, $i\neq j.$

Let ${\mathbf p}^2$ denote the characteristic polynomial of $Y^2$ and 
let ${\mathcal H}^2$ denote its characteristic surface; we would like
to show $(\beta,\alpha_k^2) \in {\mathcal H}^2$, i.e., 
${\mathbf p}^2(\beta,\alpha_k^2) = 1.$

By \eqref{e:constraintonp1}
\begin{align}\label{e:computePY1}
{\mathbf p}^{2}(\beta,\alpha^2_k) &= 
\sum_{i=0,j=1}^{d_1}
p_2(i,j) [(\beta,\alpha^2_k), v^2_{i,j} ] + 
\sum_{i=1,j \in\{0,d_1+1,...,d_2\}}^{d_1}
p_2(i,j) [(\beta,\alpha^2_k), v^2_{i,j} ] \\
&~~~+ 
\sum_{i,j \in\{0,d_1+1,...d_2\}}
p_2(i,j) [(\beta,\alpha^2_k), v^2_{i,j} ].\notag
\intertext{ \eqref{e:defppij}, \eqref{e:reduction1} and \eqref{e:ba1v1p3} imply}
&=
\sum_{i=0,j=1}^{d_1}
p'(i,j) [(\beta,\alpha_k^1), v^1_{i,j} ] + 
\sum_{i=1, j \in \{0,d_1 +1,...,d_2\} }^{d_1} \label{e:threetermsum}
p_2(i,j) [(\beta,\alpha_k^1), v^1_{i,0} ] \\
&~~~+ 
\sum_{i,j \in \{0,d_1+1,...,d_2\}} p_2(i,j)
\notag
\end{align}
\eqref{e:defppi0} implies
that the second sum above equals
$\sum_{i=1}^{d_1} p'(i,0) [(\beta,\alpha_k^1), v^1_{i,0}].$
Substitute this back in \eqref{e:threetermsum} to get
\begin{align*}
{\mathbf p}^{2}(\beta,\alpha^2_k) &= 
\sum_{i,j=0}^{d_1}
p'(i,j) [(\beta,\alpha_k^1), v_{i,j} ] + 
\sum_{i,j \in\{0,d_1+1,...,d_2 } p_2(i,j)
\intertext{which, by \eqref{d:pandp1}, equals}
& = \left(\sum_{i,j =0 }^{d_1} p'(i,j) \right)
\sum_{i,j=0}^{d_1}p_1(i,j)[(\beta,\alpha_k^1), v^{1}_{i,j} ] 
+ 
\sum_{i,j \in \{0,d_1+1,...,d_2\}} p_2(i,j)
\intertext{$(\beta,\alpha_k) \in {\mathcal H}$, 
\eqref{e:defppij}, \eqref{e:defppi0} and \eqref{e:constraintonp1}
now give}
& = \sum_{i,j=0}^{d_2} p_2(i,j) =1
\end{align*}
i.e., $(\beta,\alpha_k^2) \in {\mathcal H}^2.$
This proves $(\beta,\alpha^2_k) \in {\mathcal H}^{2}$, $k \in V_{G_2}$,i.e.,
the first part of Definition \ref{d:Yharmonic} is satisfied by
$(\beta,\alpha^2_k), {\boldsymbol c}_k, k \in V_{G_2}$  
for $G=G_2$ and $p=p_2.$ 

By definition $\alpha_{i}^1 \neq \alpha_{j}^1$ for
$i \neq j$, this and \eqref{e:defa1p1} imply $\alpha_{i}^2 \neq
\alpha_{j}^2$, i.e, the second part of Definition \ref{d:Yharmonic}
also holds for
$(\beta,\alpha^2_k), {\boldsymbol c}_k, k \in V_{G_2}$  
for $G=G_2$ and $p=p_2.$ 

Let us now show that the third part of the same definition is also satisfied.
Fix any $i\neq j$ with $G_2(i,j) =l \in \{2,3,4,....,d_1\}$
(that $G_2$ is a simple extension of $G_1$ means that
$G_2(i,j) \in \{2,3,4,..,d_1\}$; see \eqref{d:edgecomplete1}).
We want to show that $(\beta,\alpha_{i}^2)$ and
$(\beta,\alpha_{j}^2)$ are $l$-conjugate,
, i.e., that they satisfy
\eqref{e:conjugacydg2} and \eqref{e:conja1toa2}:
\begin{equation}\label{e:conjugacydg2Y1}
\alpha_{i}^2(k) = \alpha_j^2(k), k \neq l,
\end{equation}
\begin{equation}\label{e:conja1toa2Y1}
\alpha_{i}^2(l) \alpha_j^2(l)
=\frac{\sum_{k=0}^{d_2} p_2(l,k) [(\beta,\alpha_i^2\{l\}),v_{l,k}^2]} 
{\sum_{k=0}^{d_2} p_2(k,l) [(\beta, \alpha_i^2\{l\}), v_{k,l}^2]}.
\end{equation}
By definition
$G_2(i,j)=l$
when $G_1(i,j) =l$; $G_1(i,j)=l$ implies that
$\alpha_i^1$ and $\alpha_j^1$ are $l$-conjugate;  in particular,
they satisfy \eqref{e:conjugacydg2}.
\eqref{e:conjugacydg2Y1} 
follows from this, \eqref{e:defa1p1} and \eqref{e:defa1p2}.

We next prove \eqref{e:conja1toa2Y1}.
For $l \in \{2,3,4,...d_1\}$,
$\alpha_j^2(l) = \alpha^1_j(l)$ and $\alpha_i^2(l) = \alpha^2_i(l)$;
therefore 
$\alpha_i^2(l) \alpha_j^2(l) = \alpha^1_i(l)\alpha^1_j(l)$.
$\alpha^1_i$ and $\alpha^1_j$ are $l$-conjugate, in particular,
they satisfy \eqref{e:conja1toa2}:
\[
\alpha_{i}^1(l) \alpha_j^1(l)
=\frac{\sum_{k=0}^{d_1} p_1(l,k) [(\beta,\alpha_i^1\{l\}),v_{l,k}^1]} 
{\sum_{k=0}^{d_1} p_1(k,l) [(\beta, \alpha_i^1\{l\}), v_{k,l}^1]}.
\]
Then to prove \eqref{e:conja1toa2Y1} it suffices to prove
\begin{equation}\label{e:ratiosimplified}
\frac{\sum_{k=0}^{d_2} p_2(l,k) [(\beta,\alpha_i^2\{l\}),v^2_{l,k}]} 
{\sum_{k=0}^{d_2} p_2(k,l) [(\beta, \alpha_i^2\{l\}), v^2_{k,l}]}
=\frac{\sum_{k=0}^{d_1} p_1(l,k) [(\beta,\alpha_i^1\{l\}),v^1_{l,k}]} 
{\sum_{k=0}^{d_1} p_1(k,l) [(\beta, \alpha_i^1\{l\}), v^1_{k,l}]}.
\end{equation}
This follows from a decomposition parallel to the one given in
\eqref{e:computePY1}; let us first apply it to the numerator:
\begin{align}
\sum_{k=0}^{d_2}& p_2(l,k) [(\beta,\alpha_i^2\{l\}),v_{l,k}^2] 
=\sum_{k=0}^{d_1} p_2(l,k) [(\beta,\alpha_i^2\{l\}),v_{l,k}^2]
+
\sum_{k=d_1+1}^{d_2} p_2(l,k) [(\beta,\alpha_i^2\{l\}),v_{l,k}^2]
\notag
\intertext{\eqref{e:defppij}, \eqref{e:defppi0}, \eqref{d:pandp1} and \eqref{e:reduction1}  imply}
&~~~=\sum_{k=1}^{d_1} p'(l,k) [(\beta,\alpha_i^1\{l\}),v_{l,k}^1]
+
p_2(l,0)  [(\beta,\alpha_i^1\{l\}),v_{l,0}^1]
+
\sum_{k={d_1+1}}^{d_2} p_2(l,k) [(\beta,\alpha_i^1\{l\}),v_{l,0}^1]
\notag \\
&~~~= \left(\sum_{ i,j=0 }^{d_1} p'(i,j) \right) 
\sum_{k=0}^{d_1} p_1(k,l) [(\beta,\alpha_i^1\{l\}),v_{k,l}^1].\label{e:numeratorsimplified}
\end{align}
A parallel argument for the denominator gives (this time also using
\eqref{e:constraintonp1})
\[
\sum_{k=0}^{d_2} p_2(k,l) [(\beta,\alpha_i^2\{l\}),v^2_{k,l}] 
=
 \left(\sum_{i,j=0}^{d_1} p'(i,j) \right) \sum_{k=0}^{d_1}
p_1(k,l)[(\beta,\alpha_i^1\{l\}),v_{k,l}^1].
\]
Dividing \eqref{e:numeratorsimplified} by the last equality gives
\eqref{e:ratiosimplified}.

The proof that parts 4-5 of Definition \ref{d:Yharmonic}
hold for 
$(\beta,\alpha^2_k), {\boldsymbol c}_k, k \in V_{G_2}$  
for $G=G_2$ and $p=p_2$ 
is parallel to the arguments just given and is omitted.
\end{proof}

In the following remark we note several facts that we don't need
directly in our arguments. Their proofs
are very similar to the arguments given above and are left to the
reader:
\begin{remark}
{\em
Let $Y^1$ and $Y^2$ be as above, i.e, $Y^i$ is $d_i$ dimensional, $d_1 < d_2$
and $Y^2$ is a simple extension of $Y^1$;  for $y \in {\mathbb Z}^{d_2}$,
let $y^{1,d_1}$ be denote the projection of $y$ onto its first $d_1$ 
coordinates. 
If $h$ is $Y^1$-harmonic then, $y\mapsto h(y^{1,d_1})$ is $Y_2$-harmonic.
Similarly, let $G_1$ and $G_2$, ${\boldsymbol c}_k$, $\alpha_k^i$,
$k \in V_{G_1}$ be 
as in the previous proposition; then $h_{G_2}(y) = h_{G_1}(y^{1,d_1})$.
}
\end{remark}

\subsection{$\partial B$-determined $Y$-harmonic functions}
A $Y$-harmonic function $f$ is said to be
$\partial B$-determined if
\[
f(y) = {\mathbb E}[f(Y_{\tau}) 1_{\{\tau < \infty \}}], y \in \Omega_Y.
\]
$y \mapsto {\mathbb P}_y(\tau < \infty)$ is the unique $\partial B$-determined
$Y$-harmonic function with the value $1$ on $\partial B$.
The next proposition identifies simple conditions under which
a $Y$-harmonic function defined by a harmonic system is $\partial B$-determined.

\begin{proposition}\label{p:balayagesimpled}
Let $(\beta,\alpha_j), {\boldsymbol c}_j$ be the solutions of a $Y$-harmonic system
with its graph $G$ and let $h_G$ be defined as in \eqref{e:defhG}.
If 
\[
|\beta| < 1,~~~|\alpha_j(i)|, \le 1, i=2,3,...,d, j \in V_G,
\]
then $h_G$ is $\partial B$-determined.
\end{proposition}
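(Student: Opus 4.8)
The plan is to run the standard balayage (optional stopping) argument for the $Y$-harmonic function $h_G$, the point being that the hypotheses $|\beta|<1$ and $|\alpha_j(i)|\le 1$ make $h_G$ bounded on $B$ and make the relevant terminal limit vanish. Write $\varphi(y):=y(1)-\sum_{j=2}^d y(j)$, so that $B=\{\varphi\ge0\}$, $\partial B=\{\varphi=0\}$, $\tau=\inf\{k:\varphi(Y_k)=0\}$, $[(\beta,\alpha_j),y]=\beta^{\varphi(y)}\prod_{i=2}^d\alpha_j(i)^{y(i)}$, and set $C:=\sum_{j\in V_G}|\boldsymbol{c}_j|$. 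By Proposition~\ref{p:simpleharmonicfunctions} the function $h_G$ is $Y$-harmonic, hence $(h_G(Y_{k\wedge\tau}))_{k\ge0}$ is a $\mathbb{P}_y$-martingale for every $y$. Since $y(i)\ge0$ for $i\ge2$, the hypotheses give $|[(\beta,\alpha_j),z]|=|\beta|^{\varphi(z)}\prod_{i=2}^d|\alpha_j(i)|^{z(i)}\le|\beta|^{\varphi(z)}$, so $|h_G(z)|\le C$ whenever $\varphi(z)\ge0$. For $y\in B$ the stopped walk stays in $\{\varphi\ge0\}$ (before $\tau$ one has $\varphi(Y_k)\ge1$, and $\varphi(Y_\tau)=0$), so $(h_G(Y_{k\wedge\tau}))_k$ is a bounded martingale; it converges $\mathbb{P}_y$-a.s.\ and in $L^1$ to some $Z$ with $h_G(y)=\mathbb{E}_y[Z]$, where $Z=h_G(Y_\tau)$ on $\{\tau<\infty\}$ and $Z=\lim_m h_G(Y_m)$ on $\{\tau=\infty\}$.

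Next I would show $h_G(Y_m)\to0$ $\mathbb{P}_y$-a.s.; since $|h_G(Y_m)|\le C|\beta|^{\varphi(Y_m)}$ and $|\beta|<1$, this reduces to the transience statement $\varphi(Y_m)\to+\infty$ $\mathbb{P}_y$-a.s. I expect to obtain this from the strictly $Y$-superharmonic functions $h_{2,d,r}$, $r\in(\rho,1)$, of subsection~\ref{ss:tauy}: the computations in the proofs of Propositions~\ref{p:hkr} and~\ref{p:Ysuperharmonicfuns} in fact deliver a uniform contraction $\mathbb{E}_z[h_{2,d,r}(Y_1)]\le(1-\theta_r)h_{2,d,r}(z)$ for all $z\in\Omega_Y$ and some $\theta_r\in(0,1)$, because the strict decrease there is bounded below in modulus by a fixed positive multiple of $r^{\varphi(z)}$ while $r^{\varphi(z)}\ge h_{2,d,r}(z)/\sum_j\gamma_j$. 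Then $\mathbb{E}_y[h_{2,d,r}(Y_m)]\le(1-\theta_r)^m h_{2,d,r}(y)\to0$; since $h_{2,d,r}(Y_m)$ is a nonnegative supermartingale it converges a.s.\ to some $W\ge0$, Fatou forces $\mathbb{E}_y[W]=0$, so $W=0$ a.s., and $h_{2,d,r}(Y_m)\ge\gamma_d r^{\varphi(Y_m)}$ then gives $\varphi(Y_m)\to+\infty$. A self-contained alternative for the transience is a barrier argument on $\{\tau=\infty\}$: for each $c\ge1$ the level $\{\varphi(Y_m)=c\}$ is visited only finitely often, since from $\varphi=c$ a block of $c$ consecutive $-e_1$-increments — each of probability $\lambda>0$, always available because $Y$ is unconstrained on $\partial_1$ — reaches $\partial B$, so infinitely many visits to level $c$ would force $\tau<\infty$ by the conditional Borel–Cantelli lemma; induction on $c$ then gives $\varphi(Y_m)\to\infty$. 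With the transience in hand, $Z=0$ on $\{\tau=\infty\}$ and the first paragraph yields $h_G(y)=\mathbb{E}_y[h_G(Y_\tau)\bm{1}_{\{\tau<\infty\}}]$ for every $y\in B$.

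It remains to treat $y\in\Omega_Y$ with $\varphi(y)\le-1$, where $h_G$ is no longer bounded; here $\varphi(Y_m)\to+\infty$ together with the unit steps of $\varphi$ forces $\tau<\infty$ $\mathbb{P}_y$-a.s.\ with $Y_\tau\in\partial B$, and I would apply optional stopping at $\tau\wedge\upsilon_N$, $\upsilon_N:=\inf\{k:\varphi(Y_k)=-N\}$, letting $N\to\infty$: the boundary term is $\mathbb{E}_y[h_G(Y_{\upsilon_N})\bm{1}_{\{\upsilon_N\le\tau\}}]$, which is bounded by $C|\beta|^{-N}\mathbb{P}_y(\upsilon_N\le\tau)$ with $\mathbb{P}_y(\upsilon_N\le\tau)$ geometrically small by the supermartingales of subsection~\ref{ss:tauy}, while $\mathbb{E}_y[h_G(Y_\tau)\bm{1}_{\{\tau<\upsilon_N\}}]\to\mathbb{E}_y[h_G(Y_\tau)\bm{1}_{\{\tau<\infty\}}]$ by dominated convergence on $\{\tau<\upsilon_N\}\uparrow\{\tau<\infty\}$. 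The one genuinely non-routine ingredient is the transience claim $\varphi(Y_m)\to+\infty$ of the second paragraph; everything else is optional stopping plus the elementary modulus bound $|[(\beta,\alpha_j),\cdot]|\le|\beta|^{\varphi(\cdot)}$ on $B$. The secondary technical point will be bookkeeping: extracting the uniform contraction constant $\theta_r$ from the (already established) strict $Y$-superharmonicity of $h_{2,d,r}$, or, in the barrier version, arranging the induction on $c$ so that the conditional Borel–Cantelli lemma applies cleanly.
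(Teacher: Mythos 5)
Your first two paragraphs give a correct proof of the identity for $y\in B$, but by a route that differs from the paper's. The paper's own proof introduces $\xi_n \doteq \inf\{k: Y_k(1) - \sum_{j\ge 2}Y_k(j) = n\}$, applies optional sampling at $\tau\wedge\xi_n$, and bounds the boundary term at $\xi_n$ by $|\beta|^n|V_G|\max_j|\boldsymbol{c}_j|\to 0$; it then lets $n\to\infty$. That truncation avoids needing the full transience statement $\varphi(Y_m)\to+\infty$: all that is required is $\tau\wedge\xi_n<\infty$ a.s., which is elementary (from any $z$ with $0<\varphi(z)<n$, with probability $\ge\lambda^n$ the next $n$ steps are $-e_1$, driving $\varphi$ to $0$). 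Your version, a bounded martingale convergence argument plus a separate transience lemma, is valid and self-contained; the uniform contraction you invoke for $h_{2,d,r}$ does hold (the ratio $(\mathbb{E}_z[h_{2,d,r}(Y_1)]-h_{2,d,r}(z))/h_{2,d,r}(z)$ depends only on the set $\{j: z(j)>0\}$, of which there are finitely many, so it is bounded away from $0$), and the barrier/Borel--Cantelli route is a clean alternative that sidesteps extracting that constant. So for $y\in B$ the two proofs differ in mechanics but not in substance; the paper's is shorter because it never needs $\varphi(Y_m)\to\infty$ in full.

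Your third paragraph, handling $y\notin B$, has a genuine gap. You bound the boundary term by $C|\beta|^{-N}\mathbb{P}_y(\upsilon_N\le\tau)$ and invoke the subsection~\ref{ss:tauy} supermartingales to make $\mathbb{P}_y(\upsilon_N\le\tau)$ "geometrically small." Those supermartingales give $\mathbb{P}_y(\upsilon_N\le\tau)\le h_{2,d,r}(y)\,r^N/\gamma_d$ only for $r\in(\rho,1)$, so $|\beta|^{-N}\mathbb{P}_y(\upsilon_N\le\tau)\to0$ requires some $r\in(\rho,|\beta|)$, i.e.\ $|\beta|>\rho$. The hypothesis of the proposition only gives $|\beta|<1$, and indeed for the tandem solutions of Proposition~\ref{p:provesolution} one has $\beta^*=\rho_d\le\rho$, exactly where this bound is useless. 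Moreover the conclusion itself is false for $y$ deep in $\Omega_Y\setminus B$: taking $y=(-m,0,\dots,0)$ one gets $h_G(y)=\bigl(\sum_{j\in V_G}\boldsymbol{c}_j\bigr)\beta^{-m}$, whose modulus diverges as $m\to\infty$ since $|\beta|<1$, while $\bigl|\mathbb{E}_y[h_G(Y_\tau)\mathbf{1}_{\{\tau<\infty\}}]\bigr|\le\sum_j|\boldsymbol{c}_j|$ stays bounded; for the tandem solutions $\sum_j\boldsymbol{c}_j^*\neq0$ (e.g.\ for $d=2$ it is $(\lambda-\mu_1)/(\mu_2-\mu_1)$), so the two sides cannot agree. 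The paper's proof also does not address $y\notin B$, and its downstream use in Theorem~\ref{p:exactformulaDtandem} invokes only the restriction to $B$; the definition of ``$\partial B$-determined'' and Proposition~\ref{p:balayagesimpled} should therefore be read as statements for $y\in B$. Drop the third paragraph and state the conclusion for $y\in B$ only.
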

The proof is identical to that of \cite[Proposition 5]{sezer2018approximation};
for ease of reference we give an outline below:
\begin{proof}
Define $\xi_n = \inf\{k: Y_k(1) = \sum_{j=2}^d Y_k(j) + n \}.$
The optional sampling theorem and the fact that $h_G$ is $Y$-harmonic
imply
\[
h_G(y) = {\mathbb E}[ h_G(Y_\tau) 1_{\{\tau \le \xi_n\}}]
+ {\mathbb E}[h_G(Y_{\xi_n}) 1_{\{\xi_n \le \tau\}}].
\]
$|\alpha_i| \le 1$ implies
$|{\mathbb E}[h_G(Y_{\xi_n}) 1_{\{\xi_n \le \tau\}}]| \le \beta^n |V_G|\max_{j\in V_G} |{\boldsymbol c}_j|$. This, $|\beta| < 1$ and letting 
$n\rightarrow \infty$ in the last display give
\[
h_G(y) = {\mathbb E}[ h_G(Y_\tau) 1_{\{\tau < \infty \}}].
\]

\end{proof}

\section{Harmonic systems for constrained random walks representing
tandem networks and the computation of ${\mathbb P}_y(\tau < \infty)$}
\label{s:hstq}
Throughout this section we will denote the dimension of the system
with ${\boldsymbol d}$; the arguments below for
${\boldsymbol d}$ dimensions require the consideration
of all walks with dimension $d \le {\boldsymbol d}$.

We will now define a specific sequence of regular graphs
for tandem walks
and construct a particular solution to the harmonic system defined
by these graphs.
These particular solutions
will give us an exact formula  for ${\mathbb P}_y(\tau < \infty)$ in terms of
the superposition of a finite number of $\log$-linear $Y$-harmonic functions.

We will assume 
\begin{equation}\label{e:separatemu}
\mu_i \neq \mu_j, i \neq j;
\end{equation}
this generalizes $\mu_1 \neq \mu_2$ assumed in \cite{sezer2018approximation}.
One can treat parameter values which violate \eqref{e:separatemu} by taking limits
of the results of the present section, we give several examples in Section \ref{s:conclusion}.

The characteristic polynomials for the tandem walk are:
\begin{align}\label{e:charpoltandem}
{\mathbf p}(\beta,\alpha) &= 
\lambda \frac{1}{\beta} + 
\mu_1 \alpha(2) + \sum_{j=2}^{{\boldsymbol d}} \mu_j\frac{\alpha(j+1)}{\alpha(j)}\\
{\mathbf p}_i(\beta,\alpha) &= 
\lambda \frac{1}{\beta} + 
\mu_1 \alpha(2) + \mu_i +  \sum_{j=2, j \neq i}^{{\boldsymbol d}} \mu_j\frac{\alpha(j+1)}{\alpha(j)},
\notag
\end{align}
where by convention $\alpha({\boldsymbol d}+1) = \beta$ (this convention
will be used throughout this section, and in particular, 
in Lemma \ref{l:conditionforintersection}, \eqref{e:conja1toa2t}, and
\eqref{e:Cjbetaalphatandem}).

\eqref{e:charpoltandem} implies
\begin{lemma}\label{l:conditionforintersection}
For $j \in \{2,3,4,...,{\bm d} \}$,
$(\beta,\alpha) \in {\mathcal H} \cap {\mathcal H}_j$
$\iff$
$\mu_j \frac{\alpha(j+1)}{\alpha(j)} = \mu_j$
$\iff$
$\alpha(j+1) = \alpha(j)$.
\end{lemma}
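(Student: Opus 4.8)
The plan is to read all three conditions off the explicit formulas \eqref{e:charpoltandem} for the characteristic polynomials, so that the proof is essentially a one-line subtraction. Throughout, $(\beta,\alpha)$ is a point of the characteristic surface ${\mathcal H}$ (the standing context in which the lemma is applied), so that $(\beta,\alpha)\in{\mathcal H}\cap{\mathcal H}_j$ amounts to the single extra requirement ${\mathbf p}_j(\beta,\alpha)={\mathbf p}(\beta,\alpha)$, both sides equal to the constant value defining the characteristic surfaces. Thus everything reduces to computing the difference ${\mathbf p}_j-{\mathbf p}$.

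First I would subtract the two expressions in \eqref{e:charpoltandem}. The terms $\lambda/\beta$ and $\mu_1\alpha(2)$ occur in both; for the sums, the two families $\{\mu_{j'}\,\alpha(j'+1)/\alpha(j')\}_{j'\neq j}$ agree term by term, and the only discrepancy is that ${\mathbf p}$ carries the summand $\mu_j\,\alpha(j+1)/\alpha(j)$ whereas ${\mathbf p}_j$ carries instead the constant $\mu_j$. Hence
\[
{\mathbf p}_j(\beta,\alpha)-{\mathbf p}(\beta,\alpha)=\mu_j-\mu_j\frac{\alpha(j+1)}{\alpha(j)},
\]
the convention $\alpha({\boldsymbol d}+1)=\beta$ handling the boundary case $j={\boldsymbol d}$. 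Therefore $(\beta,\alpha)\in{\mathcal H}_j$ if and only if $\mu_j\,\alpha(j+1)/\alpha(j)=\mu_j$, which is the first stated equivalence; and, since $\mu_j>0$, the latter is equivalent to $\alpha(j+1)=\alpha(j)$, giving the second. This is of course the same computation that underlies the identity $C(j,\beta,\alpha)={\mathbf p}_j(\beta,\alpha)-{\mathbf p}(\beta,\alpha)$ used in the proof of Lemma \ref{l:singleterm2}, now made explicit for the tandem characteristic polynomial.

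I do not expect a genuine obstacle: the statement is a direct consequence of the form of \eqref{e:charpoltandem}, and the only work is the bookkeeping of which terms survive the subtraction. The one point deserving a word of care is that the manipulation takes place on ${\mathcal H}$, where the rational function ${\mathbf p}(\beta,\alpha)$ itself involves the ratios $\alpha(j'+1)/\alpha(j')$; implicitly $\alpha(j)\neq 0$ for all $j$ there, so dividing by $\alpha(j)$ and cancelling the strictly positive factor $\mu_j$ are both legitimate, and no degenerate subcase arises.
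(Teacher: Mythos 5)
Your proof is correct and follows exactly the route the paper intends: the paper gives no explicit argument (it simply asserts that \eqref{e:charpoltandem} implies the lemma), and subtracting ${\mathbf p}$ from ${\mathbf p}_j$ to isolate $\mu_j - \mu_j\alpha(j+1)/\alpha(j)$, under the standing hypothesis $(\beta,\alpha)\in{\mathcal H}$ and the convention $\alpha({\boldsymbol d}+1)=\beta$, is precisely the intended one-line computation. Your side remarks (the cancellation of $\mu_j>0$, and the implicit $\alpha(j)\neq 0$ on ${\mathcal H}$) are accurate and do not change the argument.
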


For the tandem walk, the conjugacy relation \eqref{e:conjalpha}
reduces to
\begin{equation}\label{e:conja1toa2t}
\alpha_1(i)\alpha_2(i)
=
\begin{cases}
\frac{\alpha(3)\mu_2}{\mu_1}, & i =2, \\
\frac{\alpha(i-1)\alpha(i+1)\mu_i}{\mu_{i-1}}, &i=2,3,...,{\bm d}.
\end{cases}
\end{equation}

For tandem walks the functions $C(j, \beta,\alpha)$ of 
\eqref{e:Cjbetaalpha} reduce to 
\begin{equation}\label{e:Cjbetaalphatandem}
C(j,\beta,\alpha) = 
\mu_j - \mu_j \frac{ \alpha(j+1)}{\alpha(j)},
\end{equation}

We define $\{2,3,...,{\bm d}\}$-regular graphs $G_{d,{\bm d}}$, $ d \in \{1,2,3,...,{\boldsymbol d}\}$ as follows:
\begin{equation}\label{d:defVGd}
V_{G_{d,{\bm d}}} = \{ a\cup\{ d\}, a \subset \{1,2,3,...,d-1\}\};
\end{equation}
for $j \in (a \cup \{d\})$, $j \neq 1$, define $G_{d,{\bm d}}$ by
\begin{align}\label{d:defGd}
G_{d,{\bm d}}( a\cup\{d\}, a\cup \{d\} \cup \{j-1\}) &= j \text{ if } j-1 \notin a 
\end{align}
and
\begin{equation}\label{e:Gdloops}
G_{d,{\bm d}}( a\cup \{d\}, a \cup \{d\}) = \{2,3,4,...,d\}- a \cup \{d\};
\end{equation}
these and its symmetry determine $G_{d,{\bm d}}$ completely.
We note that vertices of $G_{d,{\bm d}}$ are subsets
of $\{1,2,3,...,{\bm d}\}$; we will assume
these sets to be sorted, for $a \subset \{1,2,3,...,{\bm d}\}$,
$a(1)$ denotes the smallest element of $a$, $|a|$ the number of elements
in $a$ and $a(|a|)$ the greatest element of $a$.
Figure \ref{f:G4tex} shows the graph $G_{4,4}$.

\begin{figure}[h]
\begin{center}
\scalebox{0.8}{
\centerline{\input{G4tex}}}
\end{center}

\vspace{-0.65cm}
\caption{\hspace{0.25cm}$G_{d,{\bm d}}$ for $d={\bm d}=4$\label{f:G4tex}}
\end{figure}

The next two propositions follow directly from the above definition:
\begin{proposition}\label{p:Gsimpleextensiontandem}
$G_{d,{\bm d}}$ 
is the simple extension of $G_{d,d}$
to a $\{2,3,...,{\bm d}\}$-regular graph.
\end{proposition}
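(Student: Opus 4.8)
The plan is to check directly that the pair $(G_{d,d},\,G_{d,{\bm d}})$ fits Definition \ref{d:simpleextensionG} with $L=\{2,3,\dots,d\}$ and $L_1=\{2,3,\dots,{\bm d}\}$, so that $L_1-L=\{d+1,d+2,\dots,{\bm d}\}$. Three things then have to be verified: that $V_{G_{d,{\bm d}}}=V_{G_{d,d}}$; that the off-diagonal entries of $G_{d,{\bm d}}$ and $G_{d,d}$ coincide; and that $G_{d,{\bm d}}(v,v)=G_{d,d}(v,v)\cup\{d+1,\dots,{\bm d}\}$ for every vertex $v$.

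The first two are immediate from the defining formulas. The vertex set \eqref{d:defVGd}, $V_{G_{d,{\bm d}}}=\{a\cup\{d\}:a\subset\{1,\dots,d-1\}\}$, makes no reference to ${\bm d}$, hence equals $V_{G_{d,d}}$. The off-diagonal rule \eqref{d:defGd} assigns $G_{d,{\bm d}}(a\cup\{d\},\,a\cup\{d\}\cup\{j-1\})=j$ for $j\in(a\cup\{d\})\setminus\{1\}$ with $j-1\notin a$; this prescription again involves only $a,d,j$ and not ${\bm d}$, and every such $j$ satisfies $j\le d$, so all off-diagonal labels of $G_{d,{\bm d}}$ lie in $\{2,\dots,d\}$ and the off-diagonal parts of $G_{d,{\bm d}}$ and of $G_{d,d}$ are identical.

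For the loops, note that \eqref{e:Gdloops} places the labels $\{2,\dots,d\}\setminus(a\cup\{d\})$ as loops at $v=a\cup\{d\}$, and these are exactly the loops of $G_{d,d}$ at $v$; the labels $d+1,\dots,{\bm d}$ that additionally must be present for $G_{d,{\bm d}}$ to be $\{2,\dots,{\bm d}\}$-regular cannot occur off the diagonal (by the previous paragraph all off-diagonal labels are $\le d$), so each of them must be a loop at $v$. Hence $G_{d,{\bm d}}(v,v)=\big(\{2,\dots,d\}\setminus(a\cup\{d\})\big)\cup\{d+1,\dots,{\bm d}\}=G_{d,d}(v,v)\cup(L_1-L)$ for every $v$, which together with the first two checks gives the claim; uniqueness of "the" simple extension is built into Definition \ref{d:simpleextensionG}.

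There is no real obstacle here: the statement is a line-by-line comparison of \eqref{d:defVGd}–\eqref{e:Gdloops} against Definition \ref{d:simpleextensionG}. The one point deserving a moment's care — and it is already implicit in calling the $G_{d,d}$ $\{2,\dots,d\}$-regular graphs — is to confirm that \eqref{d:defGd}–\eqref{e:Gdloops} really produce, at each vertex $v=a\cup\{d\}$ and for each label $l\in\{2,\dots,d\}$, exactly one $l$-edge: when $l\notin v$ this is the loop of \eqref{e:Gdloops} (and \eqref{d:defGd} gives no $l$-edge at $v$), and when $l\in v$ it is the proper edge of \eqref{d:defGd}, running to $v\cup\{l-1\}$ if $l-1\notin a$ and from $v\setminus\{l-1\}$ if $l-1\in a$. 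This short case split is the same computation that underlies the well-posedness of the graphs $G_{d,{\bm d}}$, and it is the only spot where anything beyond reading off the definitions is required.
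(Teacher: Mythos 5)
Your proof is correct and takes essentially the same approach as the paper, which states the result with no written proof beyond ``follows directly from the above definition''; the three checks you spell out (shared vertex set, identical off-diagonal entries, loops augmented by $L_1-L$) are exactly the line-by-line comparison that is implicit there. You also correctly notice and repair the apparent typo in \eqref{e:Gdloops}, whose upper limit must be ${\bm d}$ rather than $d$ for $G_{d,{\bm d}}$ to actually be $\{2,\dots,{\bm d}\}$-regular.
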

Let
$G_{d+1,{\bm d}}^{k}$ denote the subgraph of $G_{d+1,{\bm d}}$ 
consisting of the vertices $\{a, k,d+1\}$, 
$a\subset \{1,2,3,...,k-1\}$. 
\begin{proposition}\label{p:embedding}
One can represent $G_{d+1,{\bm d}}$ as a disjoint union of the
graphs $G_{k,{\bm d}}, k=1,2,..,d,$ and the vertex $\{d+1\}$ as follows:
for $a \subset \{1,2,3,...,k-1\}$ map the vertex
$a \cup \{k\}$ of $G_{d+1,{\bm d}}$ to $a \cup \{k, d+1\}$.
This maps $G_{k,{\bm d}}$ to the 
subgraph $G_{d+1,{\bm d}}^{k}$ of $G_{d+1,{\bm d}}$ 
consisting of the vertices $a \cup\{ k,d+1\}$, 
$a\subset \{1,2,3,...,k-1\}$. 
The same map preserves 
the edge
structure of $G_{k,{\bm d}}$ as well except for the $d+1$-loops. These loops
on $G_{k,{\bm d}}$ are broken and are mapped to $d+1$-edges between
$G^{k}_{d+1,{\bm d}}$ and $G^{d-1}_{d+1,{\bm d}}$. 
\end{proposition}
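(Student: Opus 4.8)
The plan is to unwind the several combinatorial assertions packaged into the proposition and to check them in order: (a) the vertex set of $G_{d+1,{\bm d}}$ splits as the announced disjoint union; (b) the map $\phi_k:\ a\cup\{k\}\mapsto a\cup\{k,d+1\}$ is a bijection of $V_{G_{k,{\bm d}}}$ onto the $k$-th block $V_{G^k_{d+1,{\bm d}}}$; (c) $\phi_k$ carries the $\ell$-edge of $G_{k,{\bm d}}$ at each vertex to the $\ell$-edge of $G_{d+1,{\bm d}}$ for every label $\ell\neq d+1$; (d) the $(d+1)$-loops of $G_{k,{\bm d}}$ are the only edges $\phi_k$ fails to preserve; and (e) these loops reappear in $G_{d+1,{\bm d}}$ as the $(d+1)$-edges linking the blocks. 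Parts (a)--(b) are immediate from \eqref{d:defVGd}: writing a vertex of $G_{d+1,{\bm d}}$ as $b\cup\{d+1\}$ with $b\subset\{1,\dots,d\}$, one puts $k=\max b$ and $a=b\setminus\{k\}\subset\{1,\dots,k-1\}$ when $b\neq\emptyset$, while the empty-$b$ vertex is $\{d+1\}$ itself; this is exactly the partition claimed, $\phi_k$ is the evident identification of $V_{G_{k,{\bm d}}}$ with $\{a\cup\{k,d+1\}:a\subset\{1,\dots,k-1\}\}$, and the consistency with the simple-extension picture of Proposition~\ref{p:Gsimpleextensiontandem} can be recorded in passing. (The case $d+1={\bm d}=4$ of Figure~\ref{f:G4tex} is a convenient sanity check.)

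For (c)--(d) the key point is that $G_{k,{\bm d}}$ and $G_{d+1,{\bm d}}$ obey the \emph{same} rule \eqref{d:defGd}--\eqref{e:Gdloops}; only the ambient dimension changes. I would first record two elementary facts: a short symmetrisation argument shows that the non-loop labels at any vertex $V$ of either graph are exactly $V\setminus\{1\}$; and, since $a\subset\{1,\dots,k-1\}$ and $k\le d$, one has $d\in a\cup\{k\}$ if and only if $k=d$. Then, fixing a vertex $w=a\cup\{k\}$ of $G_{k,{\bm d}}$ with image $v=\phi_k(w)=a\cup\{k,d+1\}$, I would run a short case split on $\ell\in\{2,\dots,{\bm d}\}$ (the cases $\ell\le k$, then $k<\ell\le d$ or $\ell\ge d+2$, and finally $\ell=d+1$): in every case with $\ell\neq d+1$ the rule \eqref{d:defGd} produces at $v$ the $\phi_k$-image of the outgoing/incoming $\ell$-neighbour it produces at $w$ (and that neighbour again has largest element $k$, so it stays inside the $k$-th block, or the $\ell$-edge is a loop at both $w$ and $v$); whereas for $\ell=d+1$, which is a loop at $w$ because $d+1>k$, the vertex $v$ has $d+1\in v\setminus\{1\}$, so \eqref{d:defGd} forces a genuine $(d+1)$-edge rather than a loop. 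I would also note separately that the extra vertex $\{d+1\}$ carries a loop for every label except $d+1$ and exactly one $(d+1)$-edge.

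The only substantive step, and the one I expect to be the main obstacle, is (e): pinning down precisely where the broken $(d+1)$-loops reconnect and verifying that incoming/outgoing directions agree once $G_{d+1,{\bm d}}$ is symmetrised. Here I would apply \eqref{d:defGd} with $j=d+1$. For a block-$k$ vertex $v=a\cup\{k,d+1\}$ with $k<d$ one has $d\notin a\cup\{k\}$, so the rule gives an outgoing $(d+1)$-edge to $v\cup\{d\}=a\cup\{k,d,d+1\}$, whose largest element other than $d+1$ is $d$; hence this neighbour lies in $G^{d}_{d+1,{\bm d}}$ (the block whose vertices have largest element $d$), so the broken loop of $G_{k,{\bm d}}$ reappears as a $(d+1)$-edge joining $G^{k}_{d+1,{\bm d}}$ to $G^{d}_{d+1,{\bm d}}$. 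Symmetrically, at a vertex $a'\cup\{d,d+1\}$ of $G^{d}_{d+1,{\bm d}}$ the unique $(d+1)$-edge is the incoming one from $a'\cup\{d+1\}$, which sits in $G^{\max a'}_{d+1,{\bm d}}$ (or is the vertex $\{d+1\}$ when $a'=\emptyset$); together with the single $(d+1)$-edge at $\{d+1\}$ this accounts for all $(d+1)$-edges of $G_{d+1,{\bm d}}$ and exhibits them as exactly the matching between $G^{d}_{d+1,{\bm d}}$ and the remaining blocks (and $\{d+1\}$). Consistency of directions is automatic from the fact recorded above that the non-loop labels at $c\cup\{d+1\}$ are $(c\cup\{d+1\})\setminus\{1\}$, which guarantees each vertex carries precisely one edge per label. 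Assembling (a)--(e) yields the proposition.
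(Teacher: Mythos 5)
Your verification is correct, and in fact it is considerably more detailed than the paper's treatment: the paper offers no argument at all, stating only that Propositions \ref{p:Gsimpleextensiontandem} and \ref{p:embedding} ``follow directly from the above definition'' of $G_{d,{\bm d}}$. Your unwinding of that claim into steps (a)--(e) is sound. In particular, the observation that the non-loop labels at a vertex $V$ of $G_{d,{\bm d}}$ are exactly $V\setminus\{1\}$ is the right structural fact: it follows from \eqref{d:defGd}--\eqref{e:Gdloops} once one also uses symmetry of the adjacency matrix to account for $\ell$-edges at $a\cup\{d\}$ in the case $\ell-1\in a$. Your case split for $\ell\neq d+1$ correctly uses that $\ell-1\le k-1<k$, so passing from $b=a\cup\{k\}$ to $b\cup\{\ell-1\}$ or $b\setminus\{\ell-1\}$ never changes $\max b=k$, keeping the image inside the block $G^k_{d+1,{\bm d}}$.

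One thing worth flagging explicitly, since you seem to have corrected it silently: the proposition as printed says the broken $(d+1)$-loops become $(d+1)$-edges between $G^{k}_{d+1,{\bm d}}$ and $G^{d-1}_{d+1,{\bm d}}$, whereas your computation (correctly) produces $G^{d}_{d+1,{\bm d}}$. For a block-$k$ vertex $v=a\cup\{k,d+1\}$ with $k<d$, the rule \eqref{d:defGd} with $j=d+1$ gives the $(d+1)$-edge to $a\cup\{k,d,d+1\}$, whose largest non-$(d+1)$ element is $d$, so the target block is $G^{d}_{d+1,{\bm d}}$, not $G^{d-1}_{d+1,{\bm d}}$. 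This is confirmed by the $d+1={\bm d}=4$ example (Figure \ref{f:G4tex}): the $4$-edge from $\{1,4\}\in G^{1}_{4,4}$ goes to $\{1,3,4\}\in G^{3}_{4,4}$, i.e.\ block $d=3$, not block $d-1=2$. So the superscript $d-1$ in the statement appears to be a typo for $d$, and your proof gives the correct target. A complete write-up should also treat the $(d+1)$-edges emanating from $G^d_{d+1,{\bm d}}$ itself (and from $\{d+1\}$), which you do address: these go to $(a\setminus\{d\})\cup\{d+1\}$, i.e.\ back to the lower-indexed blocks, accounting for the other endpoint of each $(d+1)$-edge and showing all $(d+1)$-edges have exactly one endpoint in $G^d_{d+1,{\bm d}}$.
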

Figure \ref{f:G4tex} shows an example of the
decomposition described in Proposition \ref{p:embedding}.

For $a \subset \{2,3,4,...,{\bm d}\}$ define
\begin{align}\label{e:solutiontandem}
{\boldsymbol c}^*_{a} &\doteq (-1)^{|a|-1} \prod_{j=1}^{|a|-1} \prod_{l = a(j) + 1}^{a(j+1)}
\frac{\mu_l - \lambda}{\mu_l-\mu_{a(j)}}\\
\alpha^*_{a}(l) &\doteq
\begin{cases}
	1 &\text{ if } l \le a(1)\\
	\rho_{a(j)}, &\text{ if } a(j) < l \le a(j+1),\\
	\rho_{a(|a|)} & \text{ if } l > a(|a|),
\end{cases}
\label{e:defalphastar}\\
\beta^*_{a}&\doteq \rho_{a(|a|)}, \label{e:defbetastar}
\end{align}
$l \in \{2,3,...,{\bm d}\}$ (remember that we assume sets ordered
and $a(|a|)$ denotes
the largest element in the set). Let us give several examples to these
definitions for ${\boldsymbol d} = 8$:
\begin{align}\label{e:examplescstaralphastar}
{\boldsymbol c}^*_{\{5\}}&= 1\notag \\
\alpha^*_{\{5\}} &= (1,1,1,1,\rho_5,\rho_5,\rho_5)\notag \\
{\boldsymbol c}^*_{\{3,6\}} &= - \frac{ \mu_4 -\lambda}{\mu_4 - \mu_3} \frac{\mu_5 -
\lambda}{\mu_5-\mu_3} \frac{\mu_6-\lambda}{\mu_6-\mu_3}.\notag \\
{\boldsymbol c}^*_{\{3,5,7\}} &
= (-1)^2 \frac{\mu_4 - \lambda}{\mu_4 -\mu_3} \frac{\mu_5-\lambda}{\mu_5-\mu_3}
\frac{\mu_6-\lambda}{\mu_6-\mu_5} \frac{\mu_7-\lambda}{\mu_7-\mu_5}\notag \\
\alpha^*_{\{3\}} &= (1,1,\rho_3,\rho_3,\rho_3,\rho_3,\rho_3)\\
\alpha^*_{\{3,6\}} &= (1,1,\rho_3,\rho_3,\rho_3,\rho_6,\rho_6)\notag \\
\alpha^*_{\{3,5,7\}} &= (1,1,\rho_3,\rho_3,\rho_5,\rho_5,\rho_7)\notag \\
\alpha^*_{\{8\}} &= (1,1,1,1,1,1,1);\notag
\end{align}
remember that we index the components of $\alpha^*$ with 
$\{2,3,4,...,{\bm d}\}$; therefore,
e.g., the first $1$ on the right side of the last line is $\alpha^*_{\{8\}}(2).$

It follows from \eqref{e:solutiontandem} 
and \eqref{e:defalphastar} that
\begin{align*}
{\boldsymbol c}^*_{a \cup \{d_1,d_2\}} &=-{\boldsymbol c}^*_{a \cup \{d_1\}} 
\prod_{l=d_1+1}^{d_2} \frac{ \mu_l - \lambda}{\mu_l - \mu_{d_1}}
\\
\alpha^*_{a \cup \{d_1\}} &= \alpha^*_{a \cup \{d_1,{\boldsymbol d}\}}
\end{align*}
for any 
$1 < a(|a|) < d_1 < d_2 \le {\bm d}$ and 
$a \subset \{2,3,4,...,{\bm d}\}$;
These and Proposition \ref{p:embedding} imply
\begin{proposition}
For $d < {\boldsymbol d}$ and $y \in \partial B$
\begin{equation}\label{e:embeddingfunc}
-\left(\prod_{l=d+1}^{{\boldsymbol d}} \frac{ \mu_l - \lambda}{\mu_l - \mu_d}\right)
\sum_{ a \in V_{G_{d,{\bm d}} }} {\boldsymbol c}_a^* [ (\beta_a^*, \alpha_a^*), y ]
=
\sum_{ a \in V_{G_{{\bm d},{\boldsymbol d}}^d }} {\boldsymbol c}_a^* [ (\beta_a^*, \alpha_a^*), y ]
\end{equation}
\end{proposition}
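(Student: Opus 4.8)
The plan is to prove \eqref{e:embeddingfunc} by matching the two sums term by term, using the bijection supplied by Proposition~\ref{p:embedding} between $V_{G_{d,{\boldsymbol d}}}$ and $V_{G_{{\boldsymbol d},{\boldsymbol d}}^d}$, together with the two displayed identities for ${\boldsymbol c}^*$ and $\alpha^*$ preceding the statement and the observation that $[(\beta,\alpha),y]$ does not depend on $\beta$ when $y\in\partial B$. Concretely, by \eqref{d:defVGd} we have $V_{G_{d,{\boldsymbol d}}}=\{a\cup\{d\}:a\subset\{1,\dots,d-1\}\}$, and Proposition~\ref{p:embedding} (in its form for $G_{{\boldsymbol d},{\boldsymbol d}}$, which contains for each $k<{\boldsymbol d}$ a subgraph $G_{{\boldsymbol d},{\boldsymbol d}}^k$ with vertex set $\{a\cup\{k,{\boldsymbol d}\}:a\subset\{1,\dots,k-1\}\}$ isomorphic to $G_{k,{\boldsymbol d}}$) identifies, for $k=d$, the vertices $a\cup\{d\}$ of $G_{d,{\boldsymbol d}}$ with the vertices $a\cup\{d,{\boldsymbol d}\}$ of $G_{{\boldsymbol d},{\boldsymbol d}}^d$. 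Hence it suffices to prove that for every $a\subset\{1,\dots,d-1\}$ and every $y\in\partial B$,
\[
-\left(\prod_{l=d+1}^{{\boldsymbol d}}\frac{\mu_l-\lambda}{\mu_l-\mu_d}\right){\boldsymbol c}^*_{a\cup\{d\}}\,[(\beta^*_{a\cup\{d\}},\alpha^*_{a\cup\{d\}}),y]
={\boldsymbol c}^*_{a\cup\{d,{\boldsymbol d}\}}\,[(\beta^*_{a\cup\{d,{\boldsymbol d}\}},\alpha^*_{a\cup\{d,{\boldsymbol d}\}}),y],
\]
and then sum over $a$.

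The three pieces are as follows. Since every element of $a\subset\{1,\dots,d-1\}$ is smaller than $d$, the largest index of $a\cup\{d\}$ is $d$, and $d<{\boldsymbol d}$; so the first displayed identity with $(d_1,d_2)=(d,{\boldsymbol d})$ gives ${\boldsymbol c}^*_{a\cup\{d,{\boldsymbol d}\}}=-{\boldsymbol c}^*_{a\cup\{d\}}\prod_{l=d+1}^{{\boldsymbol d}}\frac{\mu_l-\lambda}{\mu_l-\mu_d}$ (the hypothesis $1<a(|a|)$ stated there is immaterial: reading off \eqref{e:solutiontandem}, the factor produced by appending ${\boldsymbol d}$ involves only the two largest indices $d$ and ${\boldsymbol d}$, so the identity holds for all $a\subset\{1,\dots,d-1\}$). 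The second displayed identity gives $\alpha^*_{a\cup\{d\}}=\alpha^*_{a\cup\{d,{\boldsymbol d}\}}$; by \eqref{e:defalphastar}, appending ${\boldsymbol d}$ as the new maximal index leaves $\alpha^*(l)$ unchanged for $l\le d$ and keeps $\alpha^*(l)=\rho_d$ for $l>d$. Finally, $y\in\partial B$ means $y(1)-\sum_{j=2}^{{\boldsymbol d}}y(j)=0$, so the $\beta$-exponent in $[(\beta,\alpha),y]=\beta^{\,y(1)-\sum_{j=2}^{{\boldsymbol d}}y(j)}\prod_{j=2}^{{\boldsymbol d}}\alpha(j)^{y(j)}$ vanishes, and $[(\beta,\alpha),y]=\prod_{j=2}^{{\boldsymbol d}}\alpha(j)^{y(j)}$ is independent of $\beta$; combined with $\alpha^*_{a\cup\{d\}}=\alpha^*_{a\cup\{d,{\boldsymbol d}\}}$ this yields $[(\beta^*_{a\cup\{d\}},\alpha^*_{a\cup\{d\}}),y]=[(\beta^*_{a\cup\{d,{\boldsymbol d}\}},\alpha^*_{a\cup\{d,{\boldsymbol d}\}}),y]$ on $\partial B$, even though the $\beta$-components $\beta^*_{a\cup\{d\}}=\rho_d$ and $\beta^*_{a\cup\{d,{\boldsymbol d}\}}=\rho_{{\boldsymbol d}}$ are distinct by \eqref{e:separatemu}. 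Multiplying the coefficient identity by this common value of $[(\cdot,\cdot),y]$ gives the displayed term-wise equality; summing it over $a\subset\{1,\dots,d-1\}$ and applying the bijection above turns the left-hand side into $-\left(\prod_{l=d+1}^{{\boldsymbol d}}\frac{\mu_l-\lambda}{\mu_l-\mu_d}\right)\sum_{a\in V_{G_{d,{\boldsymbol d}}}}{\boldsymbol c}^*_a[(\beta^*_a,\alpha^*_a),y]$ and the right-hand side into $\sum_{a\in V_{G_{{\boldsymbol d},{\boldsymbol d}}^d}}{\boldsymbol c}^*_a[(\beta^*_a,\alpha^*_a),y]$, which is \eqref{e:embeddingfunc}.

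I do not expect a genuine obstacle: once Proposition~\ref{p:embedding} and the two preceding identities are in hand, the argument is purely combinatorial bookkeeping. The single point where a hypothesis is actually needed is the comparison of the two $[(\beta,\alpha),y]$ factors — off $\partial B$ they differ, because their $\beta$-components are $\rho_d$ and $\rho_{{\boldsymbol d}}$ — so the restriction $y\in\partial B$ is exactly what kills the $\beta$-dependence and makes the term-wise matching (and hence the proposition) go through.
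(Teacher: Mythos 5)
Your proof is correct and matches the route the paper intends: the paper states this proposition with no explicit proof, merely citing the two displayed identities for ${\boldsymbol c}^*$ and $\alpha^*$ and Proposition~\ref{p:embedding}, and you have simply made that argument explicit, correctly identifying the vertex bijection $a\cup\{d\}\mapsto a\cup\{d,{\boldsymbol d}\}$, the term-wise matching of coefficients, and the one genuine subtlety — that $\beta^*_{a\cup\{d\}}=\rho_d\neq\rho_{\boldsymbol d}=\beta^*_{a\cup\{d,{\boldsymbol d}\}}$, so the $\partial B$ restriction is exactly what makes the $[(\beta,\alpha),y]$ factors agree despite the mismatched $\beta$'s. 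Your side remark that the paper's stated hypothesis $1<a(|a|)$ on the coefficient identity is not actually needed (since the appended factor depends only on the two largest indices) is also correct and does need to be checked, because the proposition's vertex set allows $a$ containing $1$ or even $a=\emptyset$.
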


\begin{proposition}\label{p:provesolution}
For $d \le \boldsymbol d$, let
$G_{d,{\bm d}}$ be as in \eqref{d:defVGd} and \eqref{d:defGd}. Then
$(\beta^*_{a\cup\{d\}}, \alpha^*_{a\cup\{d\}})$,
${\boldsymbol c}^*_{a\cup\{d\}}$, 
 $a \subset \{1,2,3,...,d-1\}$, defined in \eqref{e:solutiontandem},
solve the harmonic system defined
by $G_{d,{\bm d}}$.
\end{proposition}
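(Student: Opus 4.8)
The plan is to verify, one at a time, the five conditions listed under Definition \ref{d:Yharmonic} for the proposed data $(\beta^*_{a\cup\{d\}},\alpha^*_{a\cup\{d\}},{\boldsymbol c}^*_{a\cup\{d\}})$, $a\subset\{1,\dots,d-1\}$. A convenient first reduction is to the case $d={\bm d}$: by Proposition \ref{p:Gsimpleextensiontandem} the graph $G_{d,{\bm d}}$ is the simple extension of $G_{d,d}$, and by \eqref{e:defalphastar}--\eqref{e:defbetastar} one has $\beta^*_{a\cup\{d\}}=\rho_d$ and $\alpha^*_{a\cup\{d\}}(l)=\rho_d=\beta^*_{a\cup\{d\}}$ for all $l>d$, while $\alpha^*_{a\cup\{d\}}(l)$ for $l\le d$ and the numbers ${\boldsymbol c}^*_{a\cup\{d\}}$ do not involve ${\bm d}$; hence the data above for $G_{d,{\bm d}}$ is precisely the simple extension, in the sense of \eqref{e:defa1p1}--\eqref{e:defa1p2}, of the corresponding data for $G_{d,d}$, and Proposition \ref{p:invarianceundersimpleextensions} reduces matters to $d={\bm d}$. (Equivalently, one checks the conditions directly for general ${\bm d}\ge d$; the extra coordinates $l>d$ enter only trivially, since $\alpha^*_{a\cup\{d\}}$ is constantly $\rho_d$ there.)

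The computations rest on one structural observation. Write a vertex $v=a\cup\{d\}$ as the ordered set $\{v_1<\dots<v_m=d\}$. Then $\alpha^*_v$ is a step function: $\alpha^*_v(l+1)/\alpha^*_v(l)=1$ whenever $l\notin v$, while $\alpha^*_v(v_i+1)/\alpha^*_v(v_i)=\rho_{v_i}/\rho_{v_{i-1}}$ for $v_i\in v$, with the conventions $\rho_{v_0}=1$ and $\alpha^*_v(d+1)=\beta^*_v$. For condition (1), substitute this into \eqref{e:charpoltandem}: the summands with $j\notin v$ contribute $\mu_j$, the term $\lambda/\beta^*_v$ equals $\mu_d$, and the summands with $j=v_i$ simplify, via $\mu_{v_i}\rho_{v_i}=\lambda$, to $\lambda$ (for $i=1$) and $\mu_{v_{i-1}}$ (for $i\ge2$); adding these up gives ${\mathbf p}(\beta^*_v,\alpha^*_v)=\lambda+\sum_{j=1}^{\bm d}\mu_j=1$, and ${\boldsymbol c}^*_v\ne 0$ because each factor $(\mu_l-\lambda)/(\mu_l-\mu_{v_i})$ in \eqref{e:solutiontandem} is nonzero by stability and \eqref{e:separatemu}. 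Condition (5) follows from the same description together with Lemma \ref{l:conditionforintersection}: a loop label $l$ at $v$ satisfies $l\notin v$, so $\alpha^*_v(l+1)=\alpha^*_v(l)$, so $(\beta^*_v,\alpha^*_v)\in{\mathcal H}_l$. Condition (2) holds because the set of $l\in\{2,\dots,d\}$ at which $\alpha^*_v$ jumps is exactly $v\cap\{2,\dots,d\}$, and the value $\alpha^*_v(2)\in\{1,\rho_1\}$ records whether $1\in v$, so $v\mapsto\alpha^*_v$ is injective.

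For conditions (3) and (4) I use that a non-loop edge of $G_{d,{\bm d}}$ with label $j$ joins $v=a\cup\{d\}$ to $v'=v\cup\{j-1\}$, where $j\in v$ and $j-1\notin v$. Then $\alpha^*_v$ and $\alpha^*_{v'}$ agree off coordinate $j$, where $\alpha^*_v(j)=\rho_p$ and $\alpha^*_{v'}(j)=\rho_{j-1}$ with $p\doteq\max\{v_i:v_i<j-1\}$ (the fictitious $v_0$, with $\rho_p$ read as $1$, exactly when $v_1=j$), and $\alpha^*_v(j+1)=\rho_j$; since $p\ne j-1$ the two points genuinely differ in coordinate $j$. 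Feeding this and $\mu_j\rho_j=\lambda$ into the conjugacy relation \eqref{e:conja1toa2t} gives $\alpha^*_v(j)\alpha^*_{v'}(j)=\rho_p\rho_{j-1}$ on both sides, proving that they are $j$-conjugate, hence condition (3). For condition (4), \eqref{e:Cjbetaalphatandem} gives $C(j,\beta,\alpha^*_v)=\mu_j-\mu_p$ and $C(j,\beta,\alpha^*_{v'})=\mu_j-\mu_{j-1}$ (with $\mu_p$ read as $\lambda$ when $v_1=j$), while from the product form of ${\boldsymbol c}^*$ in \eqref{e:solutiontandem} --- deleting $j-1$ from $v'$ replaces the factors for the consecutive pairs $(p,j-1)$ and $(j-1,j)$ by the single factor for $(p,j)$ --- one reads off ${\boldsymbol c}^*_v/{\boldsymbol c}^*_{v'}=-(\mu_j-\mu_{j-1})/(\mu_j-\mu_p)$, which is precisely $-C(j,\beta,\alpha^*_{v'})/C(j,\beta,\alpha^*_v)$. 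I expect the only genuine difficulty to be bookkeeping: keeping the convention $\alpha({\bm d}+1)=\beta$ straight for edges and loops that reach coordinate $d$, and uniformly handling the degenerate case $v_1=j$ in which $\rho_p$ and $\mu_p$ must be replaced by $1$ and $\lambda$.
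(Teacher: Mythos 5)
Your proof is correct and follows essentially the same route as the paper's: reduce to $d={\bm d}$ via Propositions \ref{p:Gsimpleextensiontandem} and \ref{p:invarianceundersimpleextensions}, then verify the five conditions of Definition \ref{d:Yharmonic} directly from the step-function structure of $\alpha^*_v$, the identity $\mu_i\rho_i=\lambda$, Lemma \ref{l:conditionforintersection}, the conjugacy relation \eqref{e:conja1toa2t}, and the telescoping product in \eqref{e:solutiontandem}. The only cosmetic difference is that your conventions $\rho_{v_0}=1$, $\mu_p\mapsto\lambda$ when $p=0$ absorb into one computation the boundary cases ($1\in v$ vs.\ $1\notin v$, $a(1)=l'$ vs.\ $a(1)<l'$) that the paper treats separately or leaves to the reader.
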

\begin{proof}
A $d'$ tandem walk is a simple extension of the tandem walk defined
by its first $d'-1$ dimensions. This, 
Propositions \ref{p:Gsimpleextensiontandem}, \ref{p:invarianceundersimpleextensions}
and the definitions of $\beta^*$ and $\alpha^*$ above
imply that 
it suffices to prove the current proposition only
for $d=\boldsymbol d$.

The vertices of $G_{{\bm d},{\bm d}}$ are $a \cup \{{\bm d}\}$,
$a \subset \{1,2,3,....,{\bm d}-1\}$, and for all of them
we have $\beta^*_{a\cup\{{\bm d}\}} = \rho_{\bm d}$ by 
definition \eqref{e:defbetastar}.
Let us begin by showing 
$\left(\rho_{\boldsymbol d}, \alpha^*_{a\cup\{\boldsymbol d\}}\right)$, 
$a \subset \{1,2,3,....,{\bm d}-1\}$ is on the characteristic surface
${\mathcal H}$ of the tandem walk. We will write $\alpha^*$ instead
of $\alpha^*_{a\cup\{\boldsymbol d\}}$, the set $a$ will be clear from context.

Let us first consider the case when $a(1) > 1$, i.e., when $1 \notin a$;
the opposite case is treated similarly and is left to the reader.
Then $\alpha^*(l) = 1$ for $2 \le l \le a(1)$.
By definition $\alpha^*(i) = \alpha^*(i+1)$ if $a(j) < i < a(j+1)$;
these and $\beta^*_{a\cup\{{\boldsymbol d}\}} = \rho_{\boldsymbol d}$ give
\begin{align*}
{\mathbf p}(\rho_{\boldsymbol d},\alpha^*) &= 
\mu_{\boldsymbol d} + 
\sum_{j=1}^{a(1)-1} \mu_j  + \mu_{a(1)} \rho_{a(1)}  +
 \sum_{j \in (a^c -\{1\cdots a(1)-1 \})} \mu_j\\
&~~~~~+ \sum_{j \in (a-\{a(1)\}) } \mu_j \frac{\alpha^*(j+1)}{\alpha^*(j)}
+ \rho_{\boldsymbol d} \frac{\mu_{\boldsymbol d}}{\alpha^*(\boldsymbol d)}
\intertext{(where
$a^c= \{1,2,3,...,{\bm d}-1\} - a$)
and 
 in the last expression we have used the convention
$\alpha^*({\boldsymbol d} + 1) =\beta^*$;
by definition \eqref{e:defalphastar} 
$\alpha^*(a(j+1)) = \rho_{a(j)}$,
$\alpha^*(a(j)) = \rho_{a(j-1)}$ and therefore}
&= 
\mu_{\boldsymbol d} + 
\sum_{j=1}^{a(1)-1} \mu_j  + \lambda +  
 \sum_{j\in (a^c -\{1\cdots a(1)-1 \})} \mu_j\\
&~~~~~+ \sum_{j=2}^{|a|} \mu_{a(j)} \frac{\rho_{a(j)}}{\rho_{a(j-1)}}
+ \mu_{a(|a|)}
\intertext{
$\mu_{a(j)} \rho_{a(j)}/\rho_{a(j-1)} = \mu_{a(j-1)}$ implies}
&= 
\mu_{\boldsymbol d} + 
\sum_{j=1}^{a(1)-1} \mu_j  + \lambda +  
 \sum_{j\in (a^c -\{1\cdots a(1)-1\})} \mu_j\\
&~~~~~+ \sum_{j=2}^{|a|} \mu_{a(j-1)} + \mu_{a(|a|)}\\
&= 
\mu_{\boldsymbol d} + 
\sum_{j=1}^{a(1)-1} \mu_j  + \lambda +  \hspace{-0.5cm}
 \sum_{j\in (a^c -\{1\cdots a(1)-1 \})} \mu_j+ \sum_{j \in a }\mu_j = 1;
\end{align*}
i.e., $(\rho_{\boldsymbol d}, \alpha^*) \in {\mathcal H}.$

If $a_1 \neq a_2$ 
take any $ i \in a_1 -a_2$ (relabel the sets 
if necessary so that  $a_1 -a_2 \neq  \emptyset$).
Let $j$ be the index of $i$ in $a_1$, i.e., $a_1(j) = i$.
Then by definition, $\alpha^*_{a_1\cup\{\boldsymbol d\}}(j+1) = \rho_i$; but
$i \notin a_2$ and \eqref{e:separatemu} imply that no component
of $\alpha^*_{a_1\cup \{\boldsymbol d\}}$ equals $\rho_i$, and therefore
$\alpha^*_{a_1\cup\{\boldsymbol d\}} \neq \alpha^*_{a_2\cup\{\boldsymbol d\}}$. 
This shows that
$\alpha^*_{a \cup \{\boldsymbol d\}}$, $a \subset \{1,2,...,{\bm d}-1\}$ satisfy
the second part of Definition \ref{d:Yharmonic}.

Fix a vertex $a \cup \{\boldsymbol d\}$ of $G_{{\bm d},{\boldsymbol d}}$. 
By definition,
for each of its elements $l$,
this vertex is connected to $ a \cup \{\boldsymbol d\} \cup \{l-1\}$ if
$l-1 \notin a$ or 
or to $ a \cup \{\boldsymbol d\} - \{l-1\}$ if $l-1 \in a$.
Then to show that 
$(\beta^*_{a \cup \{{\bm d} \}}, \alpha^*_{a \cup \{\boldsymbol d\}})$, 
$a \subset \{1,2,3,....,{\bm d}-1\}$
satisfy the third part of Definition \ref{d:Yharmonic} it suffices to 
prove that for each $a \subset \{1,2,3,...,{\bm d}-1\}$,
and each $l \in a \cup \{\boldsymbol d\}$ such that $l-1 \notin a \cup \{\boldsymbol d\}$
$\alpha^*_{a \cup \{\boldsymbol d\}}$ and
$\alpha^*_{a \cup \{\boldsymbol d\} \cup \{l-1\}}$ are $l$-conjugate.
For ease of notation let us denote 
$a \cup \{l-1\}$ by $a_1$,
$\alpha^*_{a\cup \{\boldsymbol d\}}$ by 
$\alpha^*$, $\alpha^*_{a_1 \cup \{\boldsymbol d\}}$ by $\alpha^*_1$ and
 $\beta^*_{a_1} =\beta^*_a$  by $\beta^*$ 
(because we have assumed $d={\boldsymbol d}$,
both 
$\beta^*$ and $\beta^*_1$ are equal to $\rho_{\boldsymbol d}$).
We want to show that $(\beta^*, \alpha^*)$ and $(\beta^*, \alpha^*_1)$ are $l$-conjugate.
Let us assume $2 < l < {\boldsymbol d}$, the cases $l=2,{\boldsymbol d}$ 
are treated almost the same way
and are left to the reader.
By assumption $l \in \alpha^*$ but $l-1 \notin \alpha^*$.
If $l$ is the $j^{th}$ element of $a$, i.e.,
$l = a(j)$; then $a(k) =a_1(k)$ for $k < j$,
$a_1(j)=l-1$, $a(k-1) = a_1(k)$ for $k > j$.
This and the definition \eqref{e:defalphastar} of $\alpha^*$ imply 
\begin{equation}\label{e:therestsame}
\alpha^*(i) = \alpha^*_1(i), i \in \{2,3,4,...,{\bm d} \}, i \neq l,
\end{equation}
i.e., $\alpha^*$ and $\alpha^*_1$ satisfy \eqref{e:conjugacydg2}
(for example, for $\boldsymbol d=8$, $\alpha^*_{\{3,6\}}$ is given in 
\eqref{e:examplescstaralphastar}; on the other hand 
$\alpha^*_{\{3,5,6\}} = (1,1,\rho_3,\rho_3,\rho_5,\rho_6,\rho_6)$ and
indeed 
$ \alpha^*_{\{3,6\}}(i) = \alpha^*_{\{3,5,6\}}(i), i \neq 6$).
Definition
\eqref{e:defalphastar}
also implies 
\begin{equation}\label{e:alphastarl}
\alpha^*_1(l) = \rho_{a_1(j)} = \rho_{l-1},~ \alpha^*_1(l+1) = 
\rho_{a_1(j+1)} = \rho_l,
\end{equation}
On the other hand,
again by \eqref{e:defalphastar}, and by $l-1 \notin a$,
we have
\begin{equation*}
\alpha^*(l) = \alpha^*(l-1) = \rho_{a(j-1)} \text{ and }  
\alpha^*(l+1) = \rho_l.
\end{equation*}
Then
\[
\frac{1}{\alpha^*(l)} \frac{\alpha^*(l-1) \alpha^*(l+1) \mu_l}{\mu_{l-1}}
= \rho_{l-1}
\]
and, by \eqref{e:alphastarl} this equals $\alpha^*_1(l)$, i.e.,
$\alpha^*_1$ and $\alpha^*$ satisfy \eqref{e:conja1toa2t}.
This and \eqref{e:therestsame} mean that 
$(\beta^*,\alpha_1^*)$ and $(\beta^*,\alpha)$ are
$l$-conjugate.

Now we will prove that the 
${\boldsymbol c}^*_{a\cup\{\boldsymbol d\}}$, 
$a \subset \{2,3,4,...,{\bm d}-1\}$ defined in \eqref{e:solutiontandem}
satisfy the fourth part of Definition \ref{d:Yharmonic}. 
The structure of $G_{{\boldsymbol d},{\boldsymbol d}}$ implies 
that it suffices to check that 
\begin{equation}\label{e:conditiononCs}
\frac{{\boldsymbol c}^*_{a}}{{\boldsymbol c}^*_{a_1} } = 
-\frac{C(l',\rho_{\boldsymbol d}, \alpha^*_{a_1})}
{C(l',\rho_{\boldsymbol d},\alpha^*_a)}
\end{equation}
holds for any $ l' \in a$ such that $l'-1 \notin a$  and $a_1 = a \cup \{l'-1\}.$
There are three cases to consider: $l'=2$, $l'=\boldsymbol d$ 
and $2 < l' < \boldsymbol d$; we will only treat the last.
For $2 < l' < \boldsymbol d$ one needs to further 
consider the cases $a(1) = l'$ and $a(1) < l'$.
For $b \subset \{2,3,4,...,{\bm d}-1\}$, 
${\boldsymbol c}^*_{b\cup\{\boldsymbol d\}}$ of \eqref{e:solutiontandem}
is the product of a parity term and a running product of 
$d-b(1)$ ratios of the form $(\mu_l - \lambda)/(\mu_l - \mu_{b(j)}).$
The ratio of the parity terms of $a$ and $a_1$ is $-1$ because $a_1$ has one additional term.
If $a(1) = l'$ then $a_1(1) = l'-1$ and 
the only difference between the running products
in the definitions of ${\boldsymbol c}^*$ and ${\boldsymbol c}^*_1$
is that the latter has an additional initial term
$(\mu_{l'}-\lambda)/(\mu_{l'}-\mu_{l'-1})$ and therefore
\[
\frac{{\boldsymbol c}^*_{a}}{{\boldsymbol c}^*_{a_1}} = 
-\frac{ \mu_{l'}-\mu_{l'-1}}{\mu_{l'}-\lambda}.
\]
Because $l' > 2$ and $l'-1 \ge 2$, definition \eqref{e:defalphastar} 
implies $\alpha^*(l') =1$, $\alpha^*(l'+1) = \rho_l$,
$\alpha^*_1(l) = \rho_{l-1}$ and $\alpha^*_1(l+1) = \rho_l$.
These and \eqref{e:Cjbetaalphatandem} imply
\[
\frac{C(l,\rho_{\boldsymbol d}, \alpha^*_{a_1})}{C(l,\rho_{\boldsymbol d},\alpha^*_a)} = \frac{ \mu_{l'} - \mu_{l'-1}}{\mu_{l'} - \lambda}.
\]
The last two display imply \eqref{e:conditiononCs} for $a(1) = l'$. 

If $l' > a(1)$, let $j>1$ be the position of $l$ in $a$, i.e., $l=a(j)$.  
In this case, the definition \eqref{e:solutiontandem} implies that
the running products in the definitions of 
${\boldsymbol c}^*_{a}$ and ${\boldsymbol c}^*_{a_1}$ 
are a product of the same ratios
except for the $(l')^{th}$ terms, 
which is $(\mu_{l'} - \lambda)/(\mu_{l'} -\mu_{a(j-1)})$ 
for $a$ and $(\mu_{l'} - \lambda)/(\mu_{l'} -\mu_{l'-1})$ for $a_1$. 
$a_1$ has
one more element than $a$, therefore, the ratio of the parity terms 
is again $-1$; these imply
\[
\frac{{\boldsymbol c}^*_{a}}{{\boldsymbol c}^*_{a_1}} = 
-\frac{ \mu_{l'}-\mu_{l'-1}}{\mu_{l'}-\mu_{a(j-1)}}.
\]
On the other hand, $l' \in a$,  $j > 1$,  
$a_1 = a \cup \{l'-1\}$ 
and the definition \eqref{e:defalphastar} imply 
$\alpha^*(l') = \rho^*(a(j-1))$, $\alpha^*(l'+1) = \rho_{l'}$, 
$\alpha_1^*(l') = \rho_{l'-1}$,
and $\alpha_1^*(l'+1) = \rho_{l'}$ and therefore 
\[
\frac{C(l',\rho_{\boldsymbol d}, \alpha^*_{a_1})}{C(l',\rho_{\boldsymbol d},\alpha^*_a)} = 
\frac{ \mu_{l'} - \mu_{l'-1}}{\mu_{l'} - \mu_{a(j-1)}}.
\]
The last two displays once again imply \eqref{e:conditiononCs} 
for $l' > a(1).$

Consider a vertex $a \cup \{ {\bm d} \}$ of $G_{{\bm d}, {\bm d}}$;
by definition \eqref{e:Gdloops}, the loops on this vertex are
$\{2,3,...,{\bm d}\} - a \cup \{{\bm d}\}.$
For $l \in \{2,3,...,{\bm d}\} - a \cup \{{\bm d}\}$
the definition \eqref{e:defalphastar}
implies 
\[
\alpha^*_{a\cup\{{\bm d}\}}(l) =  \alpha^*_{a\cup\{{\bm d}\}}(l+1);
\]
we have already shown $\alpha^*_{a\cup\{d\}} \in {\mathcal H}$,
then, Lemma \ref{l:conditionforintersection} and the last display
imply
$\alpha^*_{a\cup \{\boldsymbol d\}} \in {\mathcal H}_l$ for
$l \in \{2,3,...,{\bm d}\} - a \cup \{{\bm d}\}$;
 i.e., the last part of Definition
\ref{d:Yharmonic} is also satisfied. This finishes the proof of the
proposition.
\end{proof}

\begin{proposition}\label{p:defhd}
\begin{equation}\label{e:defhd}
h^*_d \doteq \sum_{ a \subset \{1,2,3,...,d-1\}} 
{\boldsymbol c}^*_{a\cup \{d\}} [(\rho_d, \alpha^*_{a\cup\{d\}}),\cdot],
\end{equation}
$d=1,2,3,...,{\boldsymbol d}$, are $\partial B$-determined $Y$-harmonic functions.
\end{proposition}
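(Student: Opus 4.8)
The plan is to recognize $h^*_d$ as the function $h_{G_{d,{\bm d}}}$ attached to the harmonic system solved in Proposition \ref{p:provesolution}, and then to apply Propositions \ref{p:simpleharmonicfunctions} and \ref{p:balayagesimpled} in turn. First I would observe that, by \eqref{d:defVGd}, the vertex set of $G_{d,{\bm d}}$ is exactly $\{a\cup\{d\}:a\subset\{1,2,3,...,d-1\}\}$, and that $\beta^*_{a\cup\{d\}}=\rho_d$ for every such vertex by \eqref{e:defbetastar}. Hence the sum defining $h^*_d$ in \eqref{e:defhd} is precisely $\sum_{j\in V_{G_{d,{\bm d}}}} {\boldsymbol c}^*_j\,[(\beta^*_j,\alpha^*_j),\cdot]$, that is, the function $h_{G_{d,{\bm d}}}$ of \eqref{e:defhG} (note that all vertices share the common value $\beta^*=\rho_d$, as required by Definition \ref{d:Yharmonic}). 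Proposition \ref{p:provesolution} states that the family $(\beta^*_{a\cup\{d\}},\alpha^*_{a\cup\{d\}})$, ${\boldsymbol c}^*_{a\cup\{d\}}$ solves the $Y$-harmonic system defined by $G_{d,{\bm d}}$; therefore Proposition \ref{p:simpleharmonicfunctions} gives at once that $h^*_d$ is $Y$-harmonic.

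For the $\partial B$-determined property I would check the two hypotheses of Proposition \ref{p:balayagesimpled}: $|\beta^*_{a\cup\{d\}}|<1$ and $|\alpha^*_{a\cup\{d\}}(l)|\le 1$ for all $l\in\{2,3,...,{\bm d}\}$ and all $a\subset\{1,2,...,d-1\}$. The first bound is $\rho_d<1$, which is the stability assumption \eqref{as:stability}. For the second, the explicit formula \eqref{e:defalphastar} shows that every component of $\alpha^*_{a\cup\{d\}}$ equals either $1$ or $\rho_i$ for some $i\in a\cup\{d\}$, and every $\rho_i<1$ by stability; hence $|\alpha^*_{a\cup\{d\}}(l)|\le 1$. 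Proposition \ref{p:balayagesimpled} then yields that $h^*_d$ is $\partial B$-determined, completing the argument.

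There is no substantial obstacle in this proposition: the genuine work — the verification of the five conditions of Definition \ref{d:Yharmonic} — has already been carried out in Proposition \ref{p:provesolution}, and the remainder is the general machinery of Section \ref{s:dg2}. The only point requiring a moment's care is the bookkeeping identification of $h^*_d$ with $h_{G_{d,{\bm d}}}$, in particular the observation that the vertices $a\cup\{d\}$ all carry $\beta^*=\rho_d$; once that is in place, the modulus bounds needed for Proposition \ref{p:balayagesimpled} are immediate from \eqref{e:defalphastar} and \eqref{e:defbetastar}.
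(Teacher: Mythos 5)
Your proposal is correct and follows exactly the paper's argument: identify $h^*_d$ with $h_{G_{d,\boldsymbol d}}$, invoke Propositions \ref{p:provesolution} and \ref{p:simpleharmonicfunctions} for $Y$-harmonicity, then verify the modulus bounds $|\beta^*|<1$, $|\alpha^*(l)|\le 1$ from \eqref{e:defalphastar}--\eqref{e:defbetastar} and stability, and apply Proposition \ref{p:balayagesimpled}. Your writeup is somewhat more explicit about the bookkeeping identification but is otherwise the same proof.
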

\begin{proof}
That $h^*_d$ is $Y$-harmonic follows from Propositions \ref{p:provesolution} 
and \ref{p:simpleharmonicfunctions}.
The components of $\alpha^*_{a\cup\{d\}}$, $a\subset
\{1,2,3,...,d-1\}$ and $\beta^*_d = \rho_d$ are all between $0$ and $1$.
This and Proposition \ref{p:balayagesimpled} imply 
that  $h^*_d$ are all $\partial B$-determined.
\end{proof}

With definition \eqref{e:defhd} we can rewrite \eqref{e:embeddingfunc} as
\begin{equation}\label{e:embeddingfunc1}
-\left(\prod_{l=d+1}^{{\boldsymbol d}} \frac{ \mu_l - \lambda}{\mu_l - \mu_d}\right)
h_d^*(y)
=
\sum_{ a \in V_{G_{\boldsymbol d}^d }} {\boldsymbol c}_a^* [ (\beta_a^*, \alpha_a^*), y ]
\end{equation}
for $y \in {\partial B}.$

\begin{theorem}\label{p:exactformulaDtandem}
\begin{equation}\label{e:exactformulaDtandem}
{\mathbb P}_y(\tau < \infty) = 
\sum_{d=1}^{{\boldsymbol d}}
\left( \prod_{l=d+1}^{\boldsymbol d} \frac{\mu_l -\lambda}{\mu_l - \mu_d}  \right) 
h^*_d(y)
\end{equation}
for $y \in B.$
\end{theorem}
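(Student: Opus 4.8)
The plan is to identify the right-hand side of \eqref{e:exactformulaDtandem} with $\mathbb{P}_\cdot(\tau<\infty)$ via the uniqueness statement recalled just before this section: $y\mapsto\mathbb{P}_y(\tau<\infty)$ is the unique $\partial B$-determined $Y$-harmonic function taking the value $1$ on $\partial B$. Write $H$ for the right-hand side of \eqref{e:exactformulaDtandem}. By Proposition~\ref{p:defhd} each $h^*_d$ is a $\partial B$-determined $Y$-harmonic function, and both ``$Y$-harmonic'' and ``$\partial B$-determined'' (i.e., $f(y)=\mathbb{E}[f(Y_\tau){\bm 1}_{\{\tau<\infty\}}]$) are conditions preserved under linear combinations; hence $H$, being a fixed linear combination of the $h^*_d$, is itself a $\partial B$-determined $Y$-harmonic function. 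It therefore remains only to prove the boundary identity $H(y)=1$ for every $y\in\partial B$, after which uniqueness gives $H\equiv\mathbb{P}_\cdot(\tau<\infty)$ on $\Omega_Y$, in particular on $B$.

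To prove $H\equiv 1$ on $\partial B$ I would start from the definition $h^*_{\bm d}=\sum_{a\subset\{1,\dots,{\bm d}-1\}}{\boldsymbol c}^*_{a\cup\{{\bm d}\}}[(\rho_{\bm d},\alpha^*_{a\cup\{{\bm d}\}}),\cdot]$ and use the graph decomposition of Proposition~\ref{p:embedding} with $d+1={\bm d}$, which represents $G_{{\bm d},{\bm d}}$ as the disjoint union of the single vertex $\{{\bm d}\}$ (the summand $a=\emptyset$) and the embedded subgraphs $G^k_{{\bm d},{\bm d}}$, $k=1,\dots,{\bm d}-1$. Splitting the sum along this decomposition and applying the embedding identity \eqref{e:embeddingfunc1} (with $d=k$) to each block gives, for $y\in\partial B$,
\[
h^*_{\bm d}(y)={\boldsymbol c}^*_{\{{\bm d}\}}[(\rho_{\bm d},\alpha^*_{\{{\bm d}\}}),y]-\sum_{k=1}^{{\bm d}-1}\left(\prod_{l=k+1}^{{\bm d}}\frac{\mu_l-\lambda}{\mu_l-\mu_k}\right)h^*_k(y).
\]
Now \eqref{e:solutiontandem} and \eqref{e:defalphastar} give ${\boldsymbol c}^*_{\{{\bm d}\}}=1$ and $\alpha^*_{\{{\bm d}\}}\equiv 1$, while $y(1)=\sum_{j=2}^{\bm d}y(j)$ on $\partial B$, so $[(\rho_{\bm d},\alpha^*_{\{{\bm d}\}}),y]=\rho_{\bm d}^{0}=1$. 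Moving the sum to the left and adopting the convention that the empty product (the term $k={\bm d}$) equals $1$, the displayed identity becomes $\sum_{k=1}^{{\bm d}}\left(\prod_{l=k+1}^{{\bm d}}\frac{\mu_l-\lambda}{\mu_l-\mu_k}\right)h^*_k(y)=1$, which is precisely $H(y)=1$ on $\partial B$; with the first paragraph this finishes the proof.

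I do not expect a genuine obstacle, as the hard parts are already done: Proposition~\ref{p:provesolution} shows that the triples $(\beta^*_a,\alpha^*_a,{\boldsymbol c}^*_a)$ solve the relevant harmonic systems, Proposition~\ref{p:defhd} (through Proposition~\ref{p:balayagesimpled}) gives that the $h^*_d$ are $\partial B$-determined, and \eqref{e:embeddingfunc1} has already been derived. The one step needing care is the bookkeeping in the second paragraph: when the block sums over $V_{G^k_{{\bm d},{\bm d}}}$ are rewritten via \eqref{e:embeddingfunc1}, one must check that the prefactors combine with the definition of $h^*_{\bm d}$ so that each $h^*_k$, $k=1,\dots,{\bm d}$, ends up with exactly the coefficient $\prod_{l=k+1}^{{\bm d}}(\mu_l-\lambda)/(\mu_l-\mu_k)$ that appears in \eqref{e:exactformulaDtandem}; this is a matter of matching index ranges against the recursion \eqref{e:solutiontandem} for the ${\boldsymbol c}^*$, requiring no new idea.
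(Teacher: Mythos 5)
Your proposal is correct and follows essentially the same route as the paper: both arguments split the $d={\bm d}$ term $h^*_{\bm d}$ using the decomposition of $G_{{\bm d},{\bm d}}$ from Proposition~\ref{p:embedding} into the vertex $\{{\bm d}\}$ and the subgraphs $G^k_{{\bm d},{\bm d}}$, invoke the embedding identity \eqref{e:embeddingfunc1} to cancel against the $d<{\bm d}$ terms on $\partial B$, reduce to $[(\rho_{\bm d},{\boldsymbol 1}),y]=1$ on $\partial B$, and then conclude by the uniqueness of the $\partial B$-determined $Y$-harmonic function equal to $1$ on $\partial B$ (using Proposition~\ref{p:defhd}). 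The only cosmetic difference is that you first expand $h^*_{\bm d}$ and then rearrange, whereas the paper writes the full right-hand side at once and cancels the two sums directly.
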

For ${\bm d}=2$ \eqref{e:exactformulaDtandem} reduces to
\[
{\mathbb P}_y(\tau < \infty) =
\left( \rho_2^{y(1)-y(2)} 
- \frac{\mu_2-\lambda}{\mu_2 -\mu_1} \rho_2^{y(1)-y(2)} \rho_1^{y(2)}\right)
+ \frac{\mu_2 - \lambda}{\mu_2 - \mu_1} 
\rho_1^{y(1)}, 
\]
which is the formula given in \cite{sezer2018approximation} for 
${\bm d}=2$.

\begin{proof}
Let ${\boldsymbol 1} \in {\mathbb C}^{\{2,3,..,{\bm d}\}}$ denote the vector with
all components equal to $1$.
The decomposition of $G_{\boldsymbol d}$ into the single vertex ${\{\boldsymbol d}\}$
and $G_{\boldsymbol d}^d$, $d < {\boldsymbol d}$ implies that the right side of 
\eqref{e:exactformulaDtandem} equals
\begin{align*}
&[(\rho_{\boldsymbol d}, {\boldsymbol 1}),y] + 
\sum_{d=1}^{{\boldsymbol d}-1}
\sum_{a \in V_{G_{\boldsymbol d}^d}} {\boldsymbol c}^*_a
[ ( \beta_a^*, \alpha^*_a), y ].
+\sum_{d=1}^{{\boldsymbol d}-1}
\left( \prod_{l=d+1}^{\boldsymbol d} \frac{\mu_l -\lambda}{\mu_l - \mu_d}  \right) 
h^*_d(y)
\intertext{for $y \in \partial B$ \eqref{e:embeddingfunc1} implies}
&~~= 
[(\rho_{\boldsymbol d}, {\boldsymbol 1}),y],
\end{align*}
which, for $y\in \partial B$, equals $1$.
Thus, we see that
the right side of \eqref{e:exactformulaDtandem} equals $1$ on $\partial B$.
Proposition \ref{p:defhd} says that the same function is $\partial B$-determined 
and is $Y$-harmonic. Then its restriction to $B$
must be indeed equal to $y\rightarrow {\mathbb P}_y(\tau < \infty)$, $y \in B$,
which is the unique function with those properties.
\end{proof}

\section{Numerical Example}\label{s:numerical}
Take a four dimensional tandem system with rates, for example,
\[
\lambda = 1/18, \mu_1 = 3/18, \mu_2 = 7/18, \mu_3 = 2/18, \mu_4 = 5/18.
\]
For $n=60$, and in four dimensions, the probability
${\mathbb P}_x(\tau_n< \tau_0))$ can be computed numerically by
iterating the harmonic equation 
${\mathbb P}_x(\tau_n< \tau_0)=
{\mathbb E}_x[{\mathbb P}_{X_1}(\tau_n< \tau_0))]$.
Let $f(y)$ denote the right side of \eqref{e:exactformulaDtandem}.
Define
$V_n = -\log({\mathbb P}_x(\tau_n< \tau_0))/n$ and $W_n = -\log f(T_n(x))/n$.
The level curves of $V_n$ and $W_n$
and the graph of the relative error $(V-W)/V$ 
for $x= (i,j,0,0,0)$ and $x=(0,i,0,j,0)$,
$i,j \le n=60$ are shown in Figure \ref{f:relativeerror4}; 
qualitatively
these graphs show results similar to those reported in 
\cite{sezer2018approximation}:
almost zero relative error across the domain selected,
except for a boundary layer
along the $x(4)$-axis, where the relative error is bounded by $0.05.$
The size of the boundary layer is determined by the set
$R_\rho$ of \eqref{d:Rrho} and Theorem \ref{t:convergence}.

\ninseps{relativeerror4}{Level curves and relative error in four dimensions}{0.65}

Finally we consider the $14$-tandem queues with parameter values shown in
Figure \ref{f:ratesex14}.

\ninsepsc{ratesex14}{The service rates (blue) and the arrival rate (red) 
for a $14$-dimensional tandem Jackson network}{0.8}

For $n=60$, $A_n$ contains $60^{14}/14!=  8.99 \times 10^{13}$
states which makes impractical an exact calculation via iterating 
the harmonic equation satisfied by ${\mathbb P}_y(\tau_n <\tau_0).$
On the other hand, \eqref{e:exactformulaDtandem} has $2^{15}=32768$ summands and
can be quickly calculated. Define $W_n$ as before. 
Its graph over $\{x: x(4) + x(14) \le 60, x(j) = 0, j \neq 4,14\}$
is depicted in Figure \ref{f:meshex14}. 
\ninsepsc{meshex14}{The graph of $W_n$ over  $\{x: x(4) + x(14)= 60, x(j) = 0, j \neq 4,14\}$}{0.8}
For a finer approximation of ${\mathbb P}_{(1,0,\cdots,0)} (\tau_n < \tau_0)$ we
use importance sampling based on $W_n$. With $12000$ samples IS gives the
estimate $7.53 \times 10^{-20}$ with an estimated $95\%$ confidence interval  
$[6.57, 8.48] \times 10^{-20}$ (rounded to two significant figures). 
The value given by
our approximation \eqref{e:exactformulaDtandem}
for the same
probability is $f((1,0,\cdots,0)) = 1.77 \times 10^{-20}$ 
which is approximately $1/4^{th}$ of the estimate given
by IS. 
The large deviation estimate of the same probability is
$(\lambda/\min_{i=1}^{14}(\mu_i))^{60} = 4.15\times10^{-23}$.
The discrepancy between IS and \eqref{e:exactformulaDtandem} 
 quickly disappears as $x(1)$ increases. 
For example, for
$x(1) = 4$ , IS gives $2.47\times 10^{-19}$ and \eqref{e:exactformulaDtandem}
gives $2.32 \times 10^{-19}$.

\section{Conclusion}\label{s:conclusion}
In Section \ref{s:hstq} we computed ${\mathbb P}_y(\tau < \infty)$
under the assumption $\mu_i \neq \mu_j$ for $i\neq j.$
One can obtain formulas for ${\mathbb P}_y(\tau < \infty)$ when
this assumption is violated by computing limits
of \eqref{e:exactformulaDtandem} as $\mu_i \rightarrow \mu_j$; this
limiting process introduces polynomial terms to the formula. 
For example, for $d=3$ and
$\mu_1 = \mu_2 = \mu_3=\mu$ we get
\[
{\mathbb P}_y(\tau < \infty) 
= \rho^{\bar{y}(1)}
\left(
\frac{1}{2}c_0^2 (\bar{y}(1))^2 \rho^{y(2)+y(3)}
+  \rho^{y(3)} \left( \left( \frac{c_0^2}{2} + y(3) c_0^2 \right) 
\rho^{y(2)} + c_0\right) \bar{y}(1) +1 \right),
\]
where $c_0 = (\mu - \lambda)/\mu$
and $\bar{y}(1) = y(1) -( y(2) + y(3))$.
Similar limits can be computed explicitly for the cases
$\mu_1 = \mu_2 \neq \mu_3$, $\mu_1=\mu_3 \neq \mu_2$ and
$\mu_1 \neq \mu_2  = \mu_3$.
A systematic study of these cases in three and higher dimensions 
remain for future work.

Another obvious direction for future work is the study of more
general dynamics and exit boundaries. We refer the reader
to \cite[Conclusion]{sezer2018approximation} for further comments
on possible directions for future research.

\bibliography{balayage}

\end{document}